\documentclass[11pt]{article}

\usepackage[margin=1in]{geometry}
\usepackage[T1]{fontenc}
\usepackage{amsmath,amsthm,mathtools}
\usepackage{tgtermes}
\usepackage{newtxtext}
\usepackage{newtxmath}
\usepackage{bm}
\usepackage{array}
\usepackage{booktabs}
\usepackage{comment}
\usepackage{float}
\usepackage{algorithm}
\usepackage[noend]{algpseudocode}
\usepackage{subcaption}
\usepackage{xcolor}
\usepackage{microtype}
\usepackage[sort&compress,numbers]{natbib}
\usepackage[colorlinks=true,citecolor=blue,linkcolor=blue,urlcolor=blue,hypertexnames=false]{hyperref}

\allowdisplaybreaks[3]

\bibpunct[, ]{[}{]}{,}{n}{}{,}
\setlength{\bibsep}{\smallskipamount}

\providecommand{\texorpdfstring}[2]{#1}

\providecommand{\argmax}{\operatorname*{arg\,max}}

\theoremstyle{plain}
\newtheorem{theorem}{Theorem}
\newtheorem{lemma}{Lemma}
\newtheorem{proposition}{Proposition}
\newtheorem{corollary}{Corollary}

\theoremstyle{definition}
\newtheorem{assumption}{Assumption}
\newtheorem{definition}{Definition}

\newtheorem{example}{Example}

\theoremstyle{remark}
\newtheorem{remark}{Remark}

\title{\bfseries Wasserstein Distributionally Robust Regret Optimization}

\author{%
Lukas-Benedikt Fiechtner\\
Institute of Computational and Mathematical Engineering\\
Stanford University
\and
Jose Blanchet\\
Institute of Computational and Mathematical Engineering\\
Department of Management Science and Engineering\\
Stanford University
}

\date{}

\begin{document}

\maketitle

\begin{abstract}
Distributionally robust optimization (DRO) is widely used for decision-making under uncertainty, but
its adversarial focus on worst-case loss can lead to overly conservative policies. To mitigate this,
we study ex-ante Distributionally Robust Regret Optimization (DRRO) with Wasserstein ambiguity sets,
designed to balance robustness with upside potential. We develop a theory
of Wasserstein DRRO (WDRRO) paralleling Wasserstein DRO. Under smoothness
and regularity, WDRRO selects among ERM optima by a first-order gradient-discrepancy rule. If the
ERM optimizer is unique, first-order sensitivity vanishes and a second-order expansion governs
deviations. For convex quadratics ERM and DRRO coincide for any radius. We then study regimes where
these assumptions fail: nondifferentiable max-affine losses, discrete references, and
larger radii, where WDRRO can differ from ERM and WDRO. We show that computing WDRRO regret is
NP-hard even without bilinear terms. Nevertheless, we develop exact algorithms, a tractable convex relaxation with
guarantees, and experiments showing tightness and loss-dependent behavior.
\end{abstract}

\noindent\textbf{Keywords:} distributionally robust optimization, regret optimization, Wasserstein ambiguity, stochastic optimization

\section{Introduction}

Decision making under uncertainty often relies on empirical risk minimization (ERM), which
assumes that the empirical distribution adequately represents the deployment environment. ERM
minimizes expected loss and, under classical assumptions such as i.i.d.\ data and smooth losses,
enjoys strong out-of-sample guarantees in a second-order stochastic dominance sense; see
\cite{lam_impossibility_2021}. These assumptions, however, are frequently unverifiable or fail in
practice—for example, the newsvendor loss is merely Lipschitz—so ERM may perform poorly when
training and deployment distributions differ significantly.

Distributionally robust optimization (DRO) addresses such misspecification by allowing an
adversary to select the worst-case distribution from within an ambiguity set
\cite{hansen_robustness_2008}. A widely used choice is a type-$p$ Wasserstein ball of radius
$\delta$ around the empirical measure \cite{mohajerin_esfahani_data-driven_2018}, which permits meaningful perturbations of the
data. Wasserstein DRO (WDRO) has seen broad adoption, ranging from algorithmic fairness
\cite{taskesen_distributionally_2020} to portfolio optimization
\cite{pflug_ambiguity_2007, blanchet_distributionally_2022}, and is known to induce norm
regularization \cite{gao_wasserstein_2024} and to unify many estimators in statistics and machine
learning \cite{blanchet_distributionally_2024}.

Despite its versatility, DRO can be overly conservative because it focuses solely on worst-case
expected loss. This drawback is especially pronounced in settings with asymmetric upside
opportunities (e.g., early-stage investment or low-cost, high-margin inventory), where even small
adversarial perturbations can suppress favorable outcomes. We illustrate this in
Section~\ref{sec:motivation} using the classical newsvendor problem as an example.

To mitigate this over-conservatism we study \emph{Distributionally Robust Regret Optimization (DRRO)}. 
Rather than minimizing the worst-case loss, DRRO evaluates a policy 
\emph{relative to the optimal policy under each candidate distribution}, and minimizes the resulting 
\emph{worst-case regret}. There are two standard forms. 
\emph{Ex-post DRRO} compares a policy’s loss to the loss of the best alternative 
\emph{at each realization of the uncertainty}. 
For every distribution in the ambiguity set, one computes the expected regret of choosing the policy 
versus the pointwise best decision, and then takes the worst case over all distributions. This formulation has been widely 
studied in single-stage~\citep{averbakh_minmax_2004, averbakh_complexity_2005, bitar_distributionally_2024} 
and multistage linear--quadratic settings~\citep{taha_distributionally_2023, kargin_wasserstein_2023, hajar_wasserstein_2023}. 
In contrast, \emph{ex-ante DRRO} compares \emph{expected} losses: for each distribution in the ambiguity set, 
one evaluates the difference between the policy’s expected loss and the expected loss of that distribution’s 
optimal policy, and then takes the worst case over all distributions. Ex-ante DRRO directly captures regret due to 
distributional misspecification. Apart from the early work of \cite{perakis_regret_2008}, this perspective has only 
recently begun to be studied \citep{cho_wasserstein_2024, chen_regret_2021, agarwal_minimax_2022, 
poursoltani_technical_2024}, and it is the focus of this paper. 

Recent work by \cite{kurtz2026globalizedadversarialregretoptimization} introduces
Globalized Adversarial Regret Optimization (GARO), which controls adversarial regret---the
gap between the worst-case cost of a fixed decision and the oracle robust cost---uniformly over
all uncertainty-set sizes. One motivation for GARO is the critique that robust-regret models may allow
policies to become more optimistic as the uncertainty set grows. We take a different view: under asymmetric payoffs, the
direction of the regret-optimal response should depend on the relative upside from favorable outcomes
and downside from unfavorable ones. For example, in low-cost, high-margin newsvendor instances,
larger ambiguity makes under-ordering regret more salient and can justify more aggressive orders;
when the payoff asymmetry reverses, WDRRO becomes conservative instead.

\medskip
Given this background, we develop a theory of Wasserstein DRRO around two main messages.
The first is that Wasserstein DRRO is \emph{not} a blanket replacement for
ERM. In the regular small-radius regime, WDRRO often has no first-order reason to move away from
ERM. When the ERM solution is unique and the loss is smooth enough in the uncertainty, the WDRRO
value has zero first-order sensitivity and near-optimal WDRRO decisions satisfy
$\theta_\delta=\theta_{\mathrm{ERM}}+o(\delta)$; by contrast, Wasserstein DRO typically moves away
from ERM at first order, $\theta_\delta^{\mathrm{DRO}}=\theta_{\mathrm{ERM}}+O(\delta)$. When the
ERM solution is not unique, WDRRO acts instead as a regularizer over the ERM solution set, selecting
those ERM optima with the smallest worst gradient discrepancy. For strictly convex quadratic losses
and Wasserstein orders $p\ge2$, the conclusion is even stronger: the ERM policy is exactly
WDRRO-optimal for every radius.
Thus, in a broad benign regime, regret-based robustification is unnecessary; the paper identifies
when one can safely stay with ERM.
This regular regime is nevertheless structurally informative: when first-order effects vanish, the
second-order expansion identifies the first nonzero regret correction, the stability scale of
near-optimal policies, and the adversarial benchmark directions that attain the regret.

\medskip
The second message is that the interesting regime is the complement of this benign case. When the
loss is nonsmooth or the radius is large enough for higher-order effects to matter, WDRRO and ERM 
can select genuinely different policies. We therefore
study this regime in detail. Unfortunately, it is also where WDRRO becomes computationally fragile:
evaluating WDRRO regret is NP-hard already for the simplest nonlinear max-affine losses, with no
bilinear terms, type-$1$ Wasserstein ambiguity, and an $\ell_1$ ground norm. This is not a generic
feature of Wasserstein robust optimization. The corresponding WDRO, robust optimization, and robust
regret formulations remain tractable for the same max-affine loss class. For this max-affine
model class, including separable bilinear interactions between decisions and uncertainty, we develop
exact finite algorithms for WDRRO minimization and convex relaxations with guarantees. In these
regimes, the WDRRO policy can become more or less aggressive as the radius grows, depending on the
loss profile.

\medskip
\noindent\textbf{Contributions.}
We develop a structural and computational theory of Wasserstein DRRO, paralleling the now classical
WDRO framework, organized around this dichotomy:
ERM is already sufficient in regular regimes, while the regimes where WDRRO is useful require new
algorithms and relaxations.

\begin{enumerate}
    \item\label{item:contr1} \textbf{Sensitivity theory and ERM-set selection.}
    We derive first-order WDRRO expansions for possibly nonunique ERM solution sets and show
    that WDRRO selects ERM optima through a minimax gradient-discrepancy game. Under a unique
    ERM optimizer, we identify the second-order expansion, the $o(\delta)$ optimizer stability,
    and the active opposing directions.

    \item\label{item:contr3} \textbf{Exact ERM optimality for quadratic models.}
    For strictly convex quadratic losses under type-$p$ Wasserstein distances with $p\ge2$, the
    ERM policy is WDRRO-optimal for every radius, not only asymptotically.

    \item\label{item:perakis} \textbf{Tractable special case: the newsvendor problem.}
    For the univariate newsvendor model, WDRRO regret reduces to two one-dimensional concave
    maximizations. The key structural step is a Wasserstein extension of the piecewise concavity
    theorem stated in \cite{perakis_regret_2008}. Combined with specialized evaluators based on
    fractional knapsack for $p=1$ and water-filling for $p>1$, this yields a polynomial-time
    minimization algorithm for every finite Wasserstein order $p\ge1$. The resulting algorithm
    complements the type-$\infty$ Wasserstein newsvendor analysis of \cite{chen_regret_2021} by
    covering the full finite-order range.

    \item\label{item:contr5} \textbf{Hardness frontier.}
    For discrete reference distributions and max-affine losses, we prove that evaluating the original
    WDRRO regret is NP-hard even without bilinear interaction terms, with $p=1$ and an $\ell_1$
    ground norm. The result separates WDRRO from WDRO, robust optimization, and robust regret
    optimization, which are tractable for the same max-affine model class.

    \item\label{item:contr6} \textbf{Exact algorithms and convex relaxations.}
    We derive a finite nonconvex formulation for exact WDRRO regret evaluation and use it to build
    ground-truth algorithms for moderate-sized instances. We then introduce a relaxation for max-affine
    losses without bilinear terms between the decision parameter and the randomness that can be minimized
    as a single convex program. We prove that it is bounded by ex-post regret, exact at radius zero and
    at support-diameter radii, and has an $O(\delta)$ gap. We also give a local-cover extension for
    bilinear interaction terms; if $\rho$ denotes the covering radius of the chosen benchmark centers,
    this extension incurs an additional $O(\rho\delta)$ approximation error. Compared with the
    relaxation in \cite{cho_wasserstein_2024}, which remains NP-hard, our formulation is a convex
    problem rather than an NP-hard one and applies more broadly: to arbitrary finite Wasserstein
    orders, arbitrary ground norms, and unbounded decision sets.

    \item\label{item:contr7} \textbf{Empirical evaluation.}
    Experiments on univariate and two-item newsvendor models and on mean--CVaR portfolio allocation
    show that the relaxation closely tracks exact WDRRO whenever exact computation is possible. In
    the setting where the relaxation of \cite{cho_wasserstein_2024} applies, our relaxation delivers
    comparable regret performance with substantially lower computation time. Separately, the experiments illustrate qualitative behavior that DRO cannot
    capture, including item-specific aggressive/conservative responses and nonmonotone risky-asset
    exposure.
\end{enumerate}

The remainder of the paper is structured as follows:
Section~\ref{sec:motivation} motivates ex-ante regret through the newsvendor example and develops its specialized WDRRO structure.
Section~\ref{sec:sensitivity} establishes the sensitivity, ERM-selection, and quadratic exactness results.
Section~\ref{sec:problems_of_interest} identifies the max-affine computational regime and proves NP-hardness.
Section~\ref{sec:algorithms} develops exact algorithms and convex relaxations.
Section~\ref{sec:experiments} reports numerical results, and Section~\ref{sec:conclusion} concludes.

\vspace{1em}

\textbf{Notation.} We denote by $[N]$ the set $\{1,\ldots,N\}$. We write $a\odot b$ for the 
Hadamard product of two vectors or matrices $a$ and $b$. Further, we let $\vec{1}_K$ denote 
the vector of length $K$ with all entries equal to $1$. For a set $\Xi$ we denote by $\chi_\Xi$ 
the indicator function of the set ($\chi_\Xi(\xi)=0$ for $\xi\in\Xi$ and $\infty$ otherwise) 
and by $\sigma_\Xi$ the support function $\sigma_\Xi(u) = \sup_{\xi\in\Xi}u^\top\xi$. The 
recession cone of a convex set $\Xi$ is defined as 
$\operatorname{rec}(\Xi)=\{ v\in\mathbb{R}^n \mid \xi+\lambda v\in \Xi \quad \forall\, \xi\in\Xi,\ \forall\, \lambda\ge0\}$. 
Further, we denote by $\mathrm{ri}(C)$ and $\mathrm{cl}(C)$ the relative interior and closure 
of a set $C$. The Fenchel conjugate of a function $f(\xi)$ is $f^*(u)=\sup_{\xi\in\mathbb{R}^n}\{u^\top\xi - f(\xi)\}$. 
For a proper, lower semicontinuous, and convex function $f$ we write $\mathrm{dom}(f)=\{\xi\, \vert\, f(\xi) < \infty\}$ 
for the domain of $f$. We define its convex perspective via 
$\underline{f}:\mathbb{R}^n\times\mathbb{R}_+\rightarrow \bar{\mathbb{R}}$ with $\underline{f}(x, t)=tf\left(\frac{x}{t}\right)$ 
for $t>0$ and $\underline{f}(x, 0)=\sigma_{\mathrm{dom}(f^*)}(x)$. This definition ensures that 
the perspective function is also proper, lower semicontinuous, and convex. In the sequel we write 
$tf\left(\frac{x}{t}\right)$ instead of $\underline{f}(x, t)$ even for $t=0$ with the understanding 
that this denotes the extended definition from the perspective function. We also use the convention 
$0\cdot\infty=\infty$.\\ For a norm $\Vert \cdot\Vert$ we write $\Vert \cdot\Vert_*$ for its dual norm 
defined as $\Vert u\Vert_* = \sup_{\Vert \xi\Vert\leq 1}u^\top\xi$.\\ Throughout, we work with losses 
$\ell:\Theta\times \Xi\rightarrow \mathbb{R}$ where $\Theta\subseteq\mathbb{R}^d$ is a closed convex 
parameter set and $\Xi\subseteq\mathbb{R}^n$ is a closed convex set.

\section{Motivation and Problem Statement}\label{sec:motivation}

The regret-based approach underlying DRRO can be well motivated using a standard single-item newsvendor model. 
In this model, a newsvendor can buy newspapers at a price $b$ and sell them at a higher price $s$. The newsvendor 
has to decide on an order quantity $\theta$ before the random demand $X$ is revealed. If the random demand is 
known to follow the distribution $\mathbb{P}_0$, the newsvendor's objective is to solve
\begin{align*}
    \min_{\theta\geq 0}\mathbb{E}_{\mathbb{P}_0}[\ell(\theta, X)],
\end{align*}
where the loss is given by the negative profit, i.e.
\begin{align*}
    \ell(\theta, \xi) = b\theta - s\min(\theta, \xi). 
\end{align*}
It is well known that under this setting the optimal order quantity is given by $F^{-1}(1 - b/s)$ where $F$ is the 
cumulative distribution function (CDF) of $X$ under $\mathbb{P}_0$ and $F^{-1}(\cdot)$ denotes the generalized 
inverse of $F$ \cite[p.260]{nahmias_production_2015}. In circumstances where the demand distribution is unknown, 
but relatively well-modeled by $\mathbb{P}_0$, the DRO approach is to solve the problem
\begin{align}\label{eq:newsvendor_dro}
    \min_{\theta\geq 0}\sup_{\mathbb{P}\in B_\delta^p(\mathbb{P}_0)}\mathbb{E}_{\mathbb{P}}[\ell(\theta, X)]
\end{align}
where $B_\delta^p(\mathbb{P}_0)$ denotes a distributional ambiguity set centered at $\mathbb{P}_0$ with radius $\delta$. 
Such ambiguity sets can be defined in several ways, for instance through moment constraints, divergences, or probability metrics. 
In this paper we consider the Wasserstein case: $B_\delta^p(\mathbb{P}_0)$ is the ball of radius $\delta$ around $\mathbb{P}_0$ 
with respect to the type-$p$ Wasserstein distance $\mathcal{W}_p$. To define this distance, let $\Xi\subseteq\mathbb{R}^n$ 
be the support of the random variable $X$ and $\mathcal{P}(\Xi)$ be the set of all probability distributions supported on $\Xi$. 
For $\mathbb{P}, \mathbb{Q}\in\mathcal{P}(\Xi)$ define $\Pi(\mathbb{P}, \mathbb{Q})$ to be the set of all joint distributions 
with marginals $\mathbb{P}$ and $\mathbb{Q}$. For $p\geq 1$ and a norm $\Vert \cdot\Vert$ the type-$p$ Wasserstein distance is defined as 
\begin{align}
    W_p(\mathbb{P}, \mathbb{Q}) = \left(\inf_{\pi\in\Pi(\mathbb{P}, \mathbb{Q})}\int_{\Xi\times\Xi}\Vert x - y\Vert^p \pi(\mathrm{d}x, \mathrm{d}y)\right)^{\frac{1}{p}}.
\label{eq:wasserstein_distance}
\end{align}

By construction, the DRO order quantity in \eqref{eq:newsvendor_dro} is chosen to maximize the expected profit under the 
worst possible distribution in the Wasserstein ball. Choosing an order quantity in this way ensures that no matter which 
distribution from the Wasserstein ball materializes, the profit of the DRO order quantity will be at least as good as the 
optimal value of the DRO problem, thus providing a worst-case guarantee. However, the adversarial nature of the DRO formulation can also lead to order quantities that might 
be too conservative in practice, focusing too much on the worst-case, and in turn giving up a lot of upside if the correct 
model turns out to be a favorable distribution instead. 

To illustrate this conservativeness, consider an instance of the newsvendor problem where the agent can buy the 
newspapers at the wholesale price $b=0.1$ and sell them at the considerably higher retail price $s=2$. 
Further, assume that the reference distribution $\mathbb{P}_0$ for the demand is Gaussian with mean $100$ 
and standard deviation $10$, i.e. $X\sim N(100, 10^2)$. For this setting we approximate both the ERM order 
quantity and the DRO quantity for the type-2 Wasserstein ball of radius $\delta=10$ using an i.i.d.\ sample of 1000 points. 
The resulting optimal order quantities are $116.60$ for ERM and $111.54$ for DRO\@. 
We provide a heatmap in Figure~\ref{fig:dro_vs_erm_newsvendor} showing the difference between the expected 
profit of the ERM and DRO order quantities for the intersection of the Wasserstein ball with the family of 
normal distributions. The restriction to Gaussian models is only for illustration purposes as it allows for 
a convenient two-dimensional visualization where each point in the ball corresponds to the pair $(\mu, \sigma)$ 
of the mean and standard deviation of a normal distribution $N(\mu, \sigma^2)$. The color indicates the difference 
between the expected profit of the ERM and the DRO stocking strategies when applied to the corresponding Gaussian demands. 
Values larger than zero correspond to Gaussians in the Wasserstein ball under which the ERM strategy outperforms 
the DRO strategy. Likewise, values smaller than zero correspond to Gaussian distributions under which the DRO strategy 
is preferable. It can be seen that for a large portion of the Gaussians in the Wasserstein ball the direct ERM strategy 
outperforms the DRO strategy. Moreover, when the ERM strategy outperforms the DRO strategy, it does so by a significantly 
larger margin than when the DRO strategy proves advantageous.

\begin{figure}[ht]
    \centering
    \includegraphics[width=0.5\textwidth]{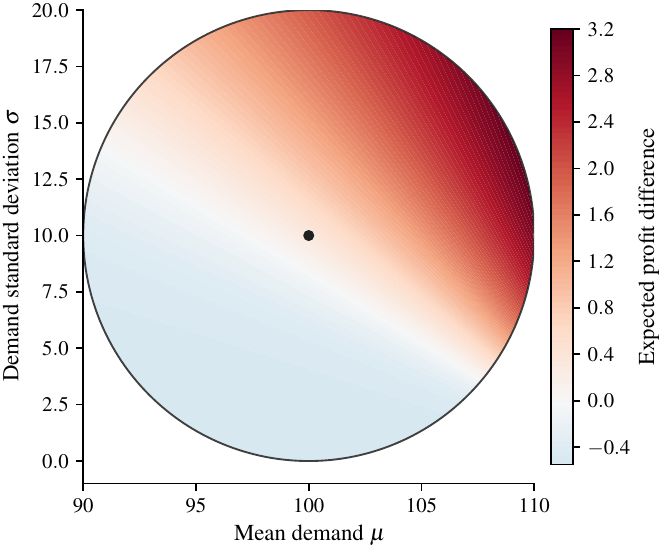}    
    \caption{Difference in expected profit between ERM and DRO policy for all normal distributions in the Wasserstein ball. Positive differences (red) indicate the gap by which the ERM strategy outperforms DRO in an out-of-sample environment corresponding to the pair $(\mu, \sigma)$. Negative differences represent the corresponding gap (blue) when the DRO strategy outperforms the ERM strategy.}
    \label{fig:dro_vs_erm_newsvendor}
\end{figure}

The key takeaway from Figure~\ref{fig:dro_vs_erm_newsvendor} is that for most Gaussians within the Wasserstein ball, 
one would regret choosing the DRO strategy over the ERM strategy in the sense that if any of these Gaussians were 
the true distribution, the expected profit of the ERM policy would exceed that of the DRO strategy. While there are 
also Gaussians for which one would regret picking the ERM over the DRO strategy, the regret incurred in these cases 
is much smaller than in the cases where one regrets opting for the DRO strategy.

The primary motivation for using DRO is that the true distribution is unknown, although it is assumed to 
lie within a specified distributional uncertainty set. When a policy $\theta$ is selected, and the true 
distribution $\mathbb{P}$ is revealed retrospectively, we can compare the expected loss associated with 
policy $\theta$ under $\mathbb{P}$ against the loss of the optimal policy for $\mathbb{P}$. 
The difference between these expected losses represents the regret from having chosen $\theta$ given 
a distribution $\mathbb{P}$. Given that all distributions within the uncertainty set are considered 
plausible candidates for the true distribution, it is rational to minimize this type of regret uniformly 
over the uncertainty set. Thus, the objective is to minimize the following notion of regret over $\theta$
\begin{align}\label{def:regret}
    R(\theta) = \sup_{\mathbb{P}\in B^p_\delta(\mathbb{P}_0)}\left\{\mathbb{E}_\mathbb{P}[\ell(\theta, X)] - \inf_{\beta\in\Theta}\mathbb{E}_\mathbb{P}[\ell(\beta, X)]\right\}.
\end{align}
where $\Theta$ is the parameter set for $\theta$. This is known as \textit{ex-ante regret} in the literature as it measures the 
regret from not knowing the true distribution. When $\delta=0$ it recovers the ERM solution. In contrast, \textit{ex-post regret} is defined as
\begin{align}\label{eq:ex_post_regret}
    R^{\mathrm{post}}(\theta) = \sup_{\mathbb{P}\in B^p_\delta(\mathbb{P}_0)}\mathbb{E}_\mathbb{P}\left[\ell(\theta, X) - \inf_{\beta\in\Theta}\ell(\beta, X)\right]
\end{align}
and measures the worst-case average sample-by-sample regret and is, at least formally, a DRO problem with a modified loss. 
When $\delta=0$, minimizing the ex-post regret still recovers an ERM optimizer, since the hindsight term is independent of $\theta$. 
However, its optimal value need not be zero, because the benchmark is allowed to choose the best decision separately for each realization of $X$. 
Thus ex-post regret measures performance relative to a different, perfect-information benchmark. 
In this paper we focus on ex-ante regret, as it directly addresses the regret associated with uncertainty about the actual distribution.  
When speaking of regret, we henceforth refer to ex-ante regret.

Note that if the loss function $\ell(\theta, \xi)$ is convex in $\theta$, then $R(\theta)$ is also convex, as convexity is preserved under expectations and suprema. However, writing the regret as $R(\theta) = \sup_{\beta \in \Theta} h(\beta; \theta)$ with
\begin{align}\label{eq:g_def}
h(\beta; \theta) := \sup_{\mathbb{P} \in B^p_\delta(\mathbb{P}_0)} \mathbb{E}_\mathbb{P}[\ell(\theta, X) - \ell(\beta, X)],
\end{align}
it becomes evident that the computation of the regret can be challenging. This difficulty arises because $h(\beta; \theta)$ 
is generally not concave in $\beta$, as concavity is not preserved under the supremum operation. However, for the univariate 
newsvendor model we establish the following Wasserstein analogue of a piecewise-concavity assertion appearing in 
\cite{perakis_regret_2008}. We give a self-contained proof in Appendix~\ref{app:newsvendor}, because the finite-order 
Wasserstein formulation requires optimizer structure that is not provided by the earlier argument.

\begin{theorem}\label{thm:piecewise_concavity}
Fix $\theta \ge 0$, a Wasserstein order $p \in [1,\infty)$, and an empirical
reference distribution
$\mathbb{P}_0=N^{-1}\sum_{i=1}^N\delta_{\hat\xi_i}$ on $\mathbb{R}_+$.
For the newsvendor loss $\ell(\theta,\xi)=b\theta-s\min(\theta,\xi)$,
the map $\beta \mapsto h(\beta;\theta)$ defined in \eqref{eq:g_def} is concave on
$[0,\theta]$ and on $[\theta,\infty)$.
\end{theorem}
The restriction to empirical reference distributions is not essential for the structure of the result:
the same argument extends directly to finitely supported reference
distributions with non-uniform weights, and the corresponding statement for
general reference distributions can be obtained by approximation.

The theorem implies that the regret can be written as the maximum of two concave maximization problems:
\begin{align}\label{eq:newsvendor_regret_split}
    R(\theta) = \max\left(\sup_{\beta\in[0, \theta]}h(\beta;\theta), \sup_{\beta\in[\theta, \infty)}h(\beta;\theta)\right).
\end{align}
This representation in turn yields an efficient procedure for minimizing
$R(\cdot)$. In Appendix~\ref{app:newsvendor_wc_algorithms}, we show that each
evaluation of $h(\beta;\theta)$ can be computed by a fractional-knapsack
algorithm when $p=1$ and by a water-filling algorithm when $p>1$. These
evaluations are then used inside the one-dimensional minimization over the DRRO
policy described in Appendix~\ref{app:newsvendor_policy_algorithms}. This
complements and extends the minimax-regret algorithm in \cite{chen_regret_2021}
for a newsvendor model under type-$\infty$ Wasserstein ambiguity. In
particular, we give a direct proof of the piecewise concavity used above and
cover finite Wasserstein orders $p$.

Table~\ref{tab:erm_dro_comparison} shows the worst- and best-case performance (both measured in profits rather than losses) 
as well as the regret for the ERM, DRO, and DRRO strategies in the simple newsvendor example. 
Here worst-case performance is the smallest expected profit over the Wasserstein ball, while best-case performance is the largest expected profit over the same ball.
The best performing policy for each criterion among these three policies is emphasized in boldface. 
The value in parentheses in each cell shows the optimality gap between the policy in the corresponding 
row and the best-performing policy among these three under the criterion indicated by the column. By construction, the DRO strategy is 
optimal for worst-case performance and therefore outperforms both ERM and DRRO on that criterion. 
Although none of the three strategies is designed to optimize this best-case criterion, ERM and DRRO 
have smaller optimality gaps with respect to best-case performance, with DRRO achieving the smallest 
gap among the three. Finally, DRRO is designed to be optimal with respect to regret. 
The amount by which DRRO outperforms DRO in terms of regret is substantial and much larger than the 
gap between the two strategies in worst-case performance, which is the quantity optimized by DRO.

\begin{table}[ht]
    \centering
    \begin{tabular}{lcccc}
        \toprule
        & \textbf{Order Quantity} & \textbf{Worst-Case Performance} & \textbf{Best-Case Performance} & \textbf{Regret}\\
        \midrule
        \textbf{ERM}   & 116.60 & 167.36 (-0.21)    & 206.07 (-0.32)    & 6.04 (+2.97)\\
        \textbf{DRO}   & 111.54 & \textbf{167.57}            & 204.53 (-1.86)    & 8.79 (+5.72)\\
        \textbf{DRRO}  & 125.48 & 166.55 (-1.02)    & \textbf{206.39}            & \textbf{3.07}\\
        \bottomrule
    \end{tabular}
    \caption{Performance comparison of ERM, DRO, and DRRO policies for $\delta=10$. The worst-case and best-case 
    performance refer to profits (instead of losses).}
    \label{tab:erm_dro_comparison}
\end{table}

Another insight from Table~\ref{tab:erm_dro_comparison} concerns the optimal order quantities $\theta$. 
The DRO strategy always becomes more conservative as the uncertainty radius 
$\delta$ increases. The behavior of the DRRO strategy is more nuanced and depends on the relationship 
between the selling price $s$ and the buying price $b$. In the present example, the newsvendor buys 
cheaply at $b=0.1$ and sells at a high price of $s=2$. Ordering more than the ERM policy is therefore 
natural: the upside is large when demand is high, while the downside from over-ordering is limited because 
the newspapers are cheap. Consistent with the order quantities in Table~\ref{tab:erm_dro_comparison}, 
Figure~\ref{fig:comparison_newsvendor} shows that the DRRO policy becomes more aggressive as the radius 
increases, whereas the DRO policy becomes more conservative. In a regime with a larger buying price, 
however, DRRO can also become more conservative as the uncertainty radius grows, as we show in the 
numerical experiments in Section~\ref{sec:experiments}. Thus, while DRO becomes more conservative as 
$\delta$ increases independently of the relation between $s$ and $b$, the direction of the DRRO response 
depends on this price regime. In terms of regret, the benefit of the DRRO strategy becomes clear in 
Figure~\ref{fig:comparison_newsvendor}, which plots the regret of the various strategies against the uncertainty radius:
in this example, even the ERM policy significantly outperforms the DRO strategy. 

\begin{figure}[ht]
    \centering
    \includegraphics[width=0.8\textwidth]{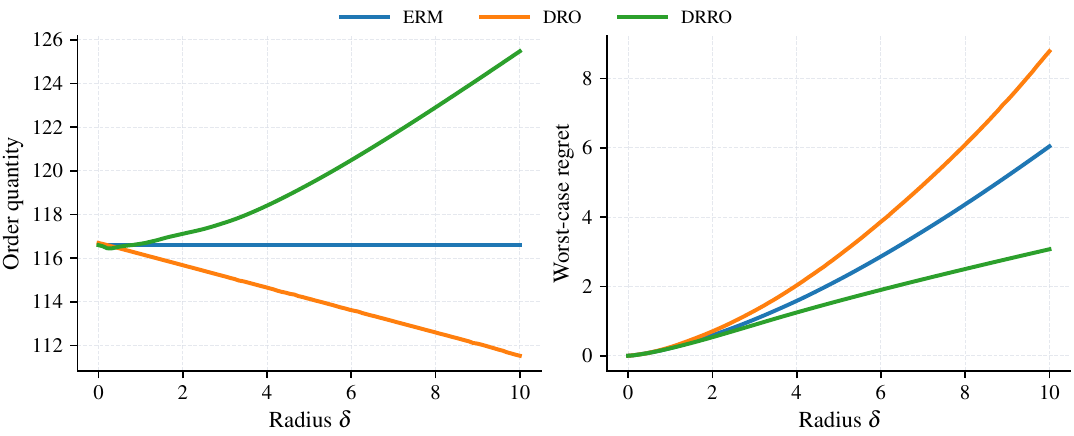}
    \caption{Comparison of order quantities and regret for ERM, DRO, and DRRO with respect to $\delta$.}
    \label{fig:comparison_newsvendor}
\end{figure}

Finally, we show in Figure~\ref{fig:worst_case-dists_drro} the empirical CDF of the original data distribution
together with the CDFs of the two distributions that maximize the regret for the optimal DRRO policy. 
The first distribution shifts mass to the left and corresponds to the regret from ordering too much. 
The other distribution shifts mass to the right and hence corresponds to the regret of ordering too little. 
These two distributions correspond to the two terms inside the maximum in \eqref{eq:newsvendor_regret_split}. 
For the DRRO optimal policy the two expressions in the maximum will be equal and the corresponding worst-case 
distributions from \eqref{eq:g_def} yield two distributions that both attain the regret.
The fact that there are always at least two worst-case distributions in DRRO 
should be contrasted with the standard DRO formulation where under suitable assumptions there is only one worst-case distribution. 
In summary, this example illustrates that a regret-focused policy can maintain robustness while capturing more upside than 
a worst-case-focused policy.

\begin{figure}[ht]
    \centering
    \includegraphics[width=0.5\textwidth]{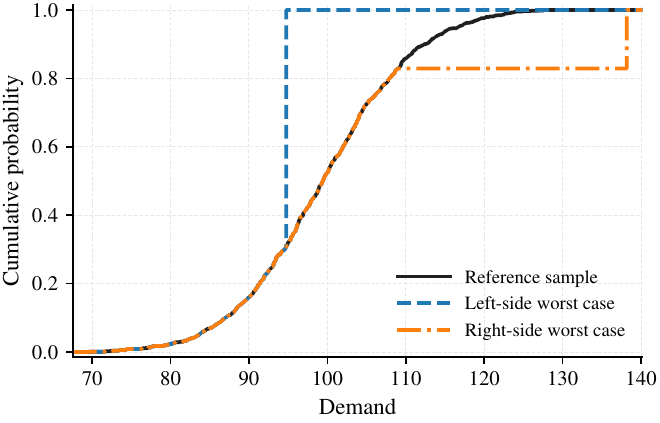}    
    \caption{Empirical CDFs of the reference sample and the two distributions attaining the worst-case regret for the DRRO order quantity.}
    \label{fig:worst_case-dists_drro}
\end{figure}

\section{Sensitivity and Optimizer Asymptotics}
\label{sec:sensitivity}

This section analyzes the small-radius behavior of WDRRO, extending the WDRO
sensitivity theory of \cite{bartl_sensitivity_2021} from worst-case risk to
worst-case regret. We write $L(\theta)=\mathbb E_{\mathbb P_0}[\ell(\theta,X)]$ 
for the expected loss under the reference measure $\mathbb{P}_0$.
The standing support and moment
assumptions are that $\Xi$ is closed and convex, $p\in(1,\infty)$ with
conjugate exponent $q=p/(p-1)$, $\mathbb P_0(\partial\Xi)=0$, and
$\mathbb E_{\mathbb P_0}[\|X\|^p]<\infty$.
The main message is that, in the regular small-radius regime and especially when the ERM optimizer is unique, WDRRO does not require 
a first-order departure from ERM\@. Robust-regret effects either select among multiple ERM solutions or appear at higher order.

To make the dependence on the Wasserstein radius explicit, for $\delta\ge0$ set
\[
F_\delta(\theta,\beta)=\sup_{\mathbb P\in B_\delta^p(\mathbb P_0)}\mathbb E_{\mathbb P}[\ell(\theta,X)-\ell(\beta,X)],
\quad R_\delta(\theta)=\sup_{\beta\in\Theta}F_\delta(\theta,\beta),\quad V_\delta=\inf_{\theta\in\Theta}R_\delta(\theta).
\]
Thus $R_\delta$ is the
WDRRO regret from \eqref{def:regret} at radius $\delta$, and
$R_0(\theta)=L(\theta)-L^*$, where $L^*=\inf_{\theta\in\Theta}L(\theta)$ and we also define
$\Theta^*=\operatorname*{argmin}_{\theta\in\Theta}L(\theta)$.

In our subsequent arguments we use near-optimizers rather than assuming attainment. A sequence
$\theta_\delta\in\Theta$ is $o(\delta^r)$-optimal if
$R_\delta(\theta_\delta)\le V_\delta+o(\delta^r)$. Given $\theta_\delta$,
a sequence $\beta_\delta\in\Theta$ is $o(\delta^r)$-active if
$F_\delta(\theta_\delta,\beta_\delta)\ge
R_\delta(\theta_\delta)-o(\delta^r)$. Exact optimizers and active $\beta$'s
correspond to zero error.
All proofs for this section are given in Appendix~\ref{app:sensitivity}.

\subsection{First-order approximation and nonunique ERM}
\label{subsec:first_order_wdrro}

The first result describes the generic case in which the ERM set may be nonunique. The leading-order term is 
governed by the worst first-order gradient discrepancy between ERM solutions. 
This gives a minimax selection rule over the ERM set.

\begin{assumption}[First-order regularity on bounded parameter sets]
\label{ass:fo_regularity}
For every bounded set $K\subseteq\Theta$, the map $(\theta,x)\mapsto \ell(\theta,x)$ is continuous on $K\times\Xi$, 
the map $x\mapsto \ell(\theta,x)$ is differentiable on $\Xi^\circ$, and $(\theta,x)\mapsto\nabla_x\ell(\theta,x)$ 
admits a continuous extension to $K\times\Xi$. Moreover, for every bounded $K\subseteq\Theta$, there is $C_K<\infty$ such that
\[
    |\ell(\theta,x)|\le C_K(1+\|x\|^p),\qquad \|\nabla_x\ell(\theta,x)\|_*\le C_K(1+\|x\|^{p-1})
\]
for all $\theta\in K$ and $x\in\Xi$.
\end{assumption}

For $\theta,\beta\in\Theta$, define
\[
    \Gamma(\theta,\beta)=\left(\mathbb E_{\mathbb P_0}\left[\|\nabla_x\ell(\theta,X)-\nabla_x\ell(\beta,X)\|_*^q\right]\right)^{1/q}.
\]
For $\theta^*\in\Theta^*$, set $A(\theta^*)=\sup_{\beta^*\in\Theta^*}\Gamma(\theta^*,\beta^*)$ and 
$\alpha=\inf_{\theta^*\in\Theta^*}A(\theta^*)$. Define
$\Theta_1^*=\{\theta^*\in\Theta^*:A(\theta^*)=\alpha\}$ and, for $\bar\theta\in\Theta_1^*$,
$\mathcal B_1(\bar\theta)=\operatorname*{argmax}_{\beta^*\in\Theta^*}\Gamma(\bar\theta,\beta^*)$.

\begin{assumption}[First-order bounded near-optimizer condition]
\label{ass:fo_bounded_theta}
Every $o(\delta)$-optimal sequence $\theta_\delta$ is bounded.
\end{assumption}

\begin{assumption}[First-order bounded active-$\beta$ condition]
\label{ass:fo_bounded_beta}
Whenever $\theta_\delta$ is bounded and satisfies $L(\theta_\delta)\to L^*$, every $o(\delta)$-active sequence 
$\beta_\delta$ for $\theta_\delta$ is bounded.
\end{assumption}

\begin{theorem}[First-order WDRRO expansion and ERM-set selection]
\label{thm:fo_wdrro_selection}
Assume the standing support and moment assumptions, Assumptions~\ref{ass:fo_regularity}--\ref{ass:fo_bounded_beta}, 
$\Theta^*\neq\emptyset$, and $\alpha<\infty$. Then $V_\delta=\alpha\delta+o(\delta)$. Every $o(\delta)$-optimal 
sequence satisfies $\operatorname{dist}(\theta_\delta,\Theta^*)\to0$, and all its cluster points lie in $\Theta_1^*$. 
If $\beta_\delta$ is $o(\delta)$-active for $\theta_\delta$, then $\operatorname{dist}(\beta_\delta,\Theta^*)\to0$. 
Moreover, along any subsequence with $\theta_{\delta_n}\to\bar\theta$ and $\beta_{\delta_n}\to\bar\beta$, 
one has $\bar\theta\in\Theta_1^*$, $\bar\beta\in\mathcal B_1(\bar\theta)$, equivalently 
$\Gamma(\bar\theta,\bar\beta)=A(\bar\theta)=\alpha$.

If, in addition, $L$ has quadratic growth around $\Theta^*$, i.e. $L(\theta)-L^*\ge c\,\operatorname{dist}(\theta,\Theta^*)^2$ 
whenever $\operatorname{dist}(\theta,\Theta^*)\le r$ for some $c,r>0$, then $\operatorname{dist}(\theta_\delta,\Theta^*)=o(\sqrt{\delta})$ 
for every $o(\delta)$-optimal sequence and $\operatorname{dist}(\beta_\delta,\Theta^*)=o(\sqrt{\delta})$ for every 
$o(\delta)$-active sequence.
\end{theorem}

The coefficient $\alpha$ is the first-order value of the game played inside the ERM set. If $\Theta^*$ is nonunique, 
the robust-regret optimizer is not merely an arbitrary ERM solution: it selects those ERM solutions whose worst 
gradient discrepancy against other ERM solutions is smallest. If $\Theta^*$ is a singleton, then $\alpha=0$, 
and Theorem~\ref{thm:fo_wdrro_selection} only says that $V_\delta=o(\delta)$. The next subsection identifies 
the leading term in that case. However, before turning to the singleton case, we record sufficient conditions for the boundedness assumptions.

\begin{lemma}[Sufficient conditions for bounded near-optimizers]
\label{lemma:bounded_near_optimizers}
Assume the standing support and moment assumptions and Assumption~\ref{ass:fo_regularity}. Assumptions~\ref{ass:fo_bounded_theta} 
and~\ref{ass:fo_bounded_beta} hold if either of the following conditions holds.

\begin{enumerate}
    \item $\Theta$ is bounded and thus compact.

    \item $L$ is coercive, i.e. $D(\theta):=L(\theta)-L^*\to\infty$ as $\|\theta\|\to\infty$, and there exist constants $C<\infty$ 
    and $\eta\in[0,1]$ such that $\|\nabla_x\ell(\theta,x)\|_* \le C\bigl(1+D(\theta)^\eta\bigr)(1+\|x\|^{p-1})$ for 
    all $\theta\in\Theta$ and $x\in\Xi$.
\end{enumerate}
\end{lemma}

\begin{lemma}[Compact sufficient conditions for attainment]
\label{lemma:attainment}
Assume that $\Theta$ and $\Xi$ are compact and that $\ell$ is continuous on $\Theta\times\Xi$. Then, for every $\delta\ge0$:
\begin{enumerate}
    \item for fixed $(\theta,\beta)$, the supremum in $F_\delta(\theta,\beta)$ is attained.
    \item for fixed $\theta$, the supremum over $\beta$ in $R_\delta(\theta)$ is attained.
    \item the infimum over $\theta$ in $V_\delta$ is attained.
\end{enumerate}
\end{lemma}

\begin{example}[Rank-deficient empirical least squares]
\label{ex:rank_deficient_regression}
Let $\mathbb P_0=N^{-1}\sum_{i=1}^N\delta_{(x_i,y_i)}$ on $\mathbb R^m\times\mathbb R$,
with squared loss $\ell(\theta,(x,y))=(y-x^\top\theta)^2$, and take $p\ge2$ with the $\ell_2$ ground norm for covariate transport. We use the standard DRO perturbation model for regression in which only the features are transported, as in, for example, \cite{blanchet_robust_2019,kuhn_wasserstein_2024}. Thus labels are fixed
and only covariates are transported. Technically, this is an OT ambiguity set generated by the extended cost
$c((x,y),(\tilde x,\tilde y))=\|x-\tilde x\|_2^p$ when $y=\tilde y$ and $+\infty$ otherwise, not a Wasserstein ball induced by a
finite norm on the joint feature-label space. However, the same calculations apply with this transport cost in place of a norm-based cost.
Let $X\in\mathbb R^{N\times m}$ be the design matrix with rows $x_i^\top$, assume it is rank deficient, and let
$\hat\theta^\dagger=(X^\top X)^\dagger X^\top y$,
and set $r_i=y_i-x_i^\top\hat\theta^\dagger$ and $\hat\sigma^2=N^{-1}\sum_i r_i^2$.

If $\Theta=\mathbb R^m$ and $\hat\sigma^2>0$, then $V_0=0$ but $\lim_{\delta\downarrow0}V_\delta=\hat\sigma^2$.
Thus $\delta\mapsto V_\delta$ is discontinuous at zero; see Appendix~\ref{app:sensitivity} for a proof.

Now let $\Theta=B_R(0):=\{\theta:\|\theta\|_2\le R\}$ be the closed Euclidean ball. If $\|\hat\theta^\dagger\|_2\le R$,
then $\Theta^*=(\hat\theta^\dagger+\ker X)\cap B_R(0)$. All ERM solutions share the residual vector $r$, and for
$\theta,\beta\in\Theta^*$,
\[
    \Gamma(\theta,\beta)=2\left(\frac1N\sum_{i=1}^N |r_i|^q\right)^{1/q}\|\theta-\beta\|_2.
\]
Hence the selected ERM policies are the Chebyshev centers of $\Theta^*$ with
$\alpha=2(N^{-1}\sum_i |r_i|^q)^{1/q}\operatorname{rad}(\Theta^*)$, where
$\operatorname{rad}(\Theta^*)=\inf_{\theta\in\Theta^*}\sup_{\beta\in\Theta^*}\|\theta-\beta\|_2$.
Since $\hat\theta^\dagger\perp\ker X$, the Euclidean-ball Chebyshev center is $\hat\theta^\dagger$ and
$\operatorname{rad}(\Theta^*)=\sqrt{R^2-\|\hat\theta^\dagger\|_2^2}$. This also shows $\alpha\to\infty$ as
$R\to\infty$ which is consistent with the discontinuity in the unbounded case.

When $\|\hat\theta^\dagger\|_2>R$, the constrained least-squares solution is the unique trust-region solution
$\hat\theta_R=(X^\top X+\lambda I)^{-1}X^\top y$, where $\lambda>0$ is the unique multiplier chosen so that
$\|\hat\theta_R\|_2=R$. This is not generally the Euclidean projection of $\hat\theta^\dagger$.
The first-order coefficient from Theorem~\ref{thm:fo_wdrro_selection} then vanishes because the
constrained ERM set is a singleton.
\end{example}

\subsection{Second-order expansion under a unique ERM optimizer}
\label{subsec:unique_second_order}

When the ERM optimizer is unique, the first-order coefficient in Theorem~\ref{thm:fo_wdrro_selection} is zero. The next result identifies the first nonzero term,
 the sharper optimizer rate, and the active opposing directions.

\begin{assumption}[Unique ERM and positive definite second-order expansion]
\label{ass:unique_second_order}
The ERM set is $\Theta^*=\{\theta_0\}$ with $\theta_0\in\Theta^\circ$, and $L(\theta_0+h)=L(\theta_0)+\frac12 h^\top Hh+o(\|h\|^2)$ for some symmetric $H\succ0$.
\end{assumption}

\begin{assumption}[Local mixed smoothness near $\theta_0$]
\label{ass:mixed_smoothness}
There is a neighborhood $U\subseteq\Theta^\circ$ of $\theta_0$ such that $\theta\mapsto\nabla_x\ell(\theta,x)$ is differentiable on $U$ for every $x\in\Xi$,
the mixed derivative $\nabla_x\nabla_\theta\ell(\theta,x)\in\mathbb R^{n\times d}$ is continuous on $U\times\Xi$,
and for every compact $K\subset U$ there is $C_K<\infty$ such that
$\|\nabla_x\nabla_\theta\ell(\theta,x)\|\le C_K(1+\|x\|^{p-1})$, where the matrix norm is
$\|M\|:=\sup_{\|u\|\le1}\|Mu\|_*$.
\end{assumption}

\begin{assumption}[Second-order bounded near-optimizer condition]
\label{ass:so_bounded_theta}
Every $o(\delta^2)$-optimal sequence $\theta_\delta$ is bounded.
\end{assumption}

\begin{assumption}[Second-order bounded active-$\beta$ condition]
\label{ass:so_bounded_beta}
For every bounded sequence $a_\delta\in\mathbb R^d$, every $o(\delta^2)$-active sequence $\beta_\delta$ for $\theta_0+\delta a_\delta$ is bounded.
\end{assumption}

These boundedness assumptions are the second-order analogues of Assumptions~\ref{ass:fo_bounded_theta}--\ref{ass:fo_bounded_beta}, 
but they only need to control sequences on the local scale relevant when the ERM solution is unique. The near-optimality 
requirement is sharper, $o(\delta^2)$ instead of $o(\delta)$, and the active-$\beta$ condition only needs to control 
opponents of decisions of the form $\theta_0+\delta a_\delta$ with bounded $a_\delta$, since the theorem proves that 
second-order optimal decisions satisfy $\theta_\delta=\theta_0+o(\delta)$.

Set $B(x)=\nabla_x\nabla_\theta\ell(\theta_0,x)\in\mathbb R^{n\times d}$, 
so that $B(x)u=\nabla_x(\nabla_\theta\ell(\theta_0,x)^\top u)$. Define
\[
    \gamma(u)=\left(\mathbb E_{\mathbb P_0}[\|B(X)u\|_*^q]\right)^{1/q},\quad
    \rho(a)=\sup_w\{w^\top Ha-\tfrac12 w^\top Hw+\gamma(w)\},
    \quad
    \kappa=\rho(0),
\]
and $\mathcal W^*=\operatorname*{argmax}_w\{\gamma(w)-\tfrac12w^\top Hw\}$.

\begin{theorem}[Second-order WDRRO expansion under a unique ERM]
\label{thm:unique_second_order}
Assume the standing support and moment assumptions, Assumption~\ref{ass:fo_regularity}, and Assumptions~\ref{ass:unique_second_order}--\ref{ass:so_bounded_beta}.
Then $R_\delta(\theta_0+\delta a)=\delta^2\rho(a)+o(\delta^2)$ locally uniformly in $a$, and $V_\delta=\kappa\delta^2+o(\delta^2)$.
Every $o(\delta^2)$-optimal sequence satisfies $\theta_\delta=\theta_0+o(\delta)$. If $\beta_\delta$ is $o(\delta^2)$-active and
$w_\delta=(\theta_\delta-\beta_\delta)/\delta$, then $w_\delta$ is bounded and every cluster point lies in $\mathcal W^*$.
In other words, along convergent subsequences, $\beta_\delta=\theta_0-\delta w+o(\delta)$ with $w\in\mathcal W^*$. Conversely, every $w\in\mathcal W^*$
is realized by the active branch $\beta_\delta^w=\theta_0-\delta w$ for the decision $\theta_0$.

Since $\gamma$ is even, $\mathcal W^*$ is symmetric. Thus, if $\kappa>0$, there are at least two asymptotic active-$\beta$ directions,
and in general these directions are exactly the maximizers in $\mathcal W^*$. Finally, the value can also be written in the
scale-invariant form $\kappa=\frac12\sup_{u\ne0}\gamma(u)^2/(u^\top Hu)$.
\end{theorem}

In this unique-ERM regime, the WDRRO value sensitivity at $\delta=0$ is zero. This is sharper than the analogous WDRO behavior: under comparable smoothness assumptions,
the analysis in \cite{bartl_sensitivity_2021} shows that the WDRO optimizer typically satisfies $\theta_\delta^{\rm DRO}=\theta_0 + O(\delta)$, and the derivative of the WDRO value at $\delta=0$ is
generally nonzero. In WDRRO, by contrast, near-optimal decisions satisfy
$\theta_\delta=\theta_0+o(\delta)$. Thus, under the regularity conditions above, ERM is already first-order optimal for small-radius regret
objectives. The robust-regret correction only appears at second order.

\begin{remark}[Computation of $\kappa$]
\label{rem:kappa_computation}
The Rayleigh quotient for $\kappa$ is one half of a squared operator norm, with the domain measured in the curvature norm
$(u^\top Hu)^{1/2}$ and the codomain measured by the $L^q(\mathbb P_0)$ norm induced by the dual ground norm. In general
this operator norm is hard to compute because it maximizes a convex, possibly nonsmooth seminorm over an ellipsoid. When
$p=2$ and the transport norm is Euclidean, however, $\gamma(u)^2=u^\top Cu$ with
$C=\mathbb E_{\mathbb P_0}[B(X)^\top B(X)]$. The quotient then reduces to finding the largest generalized eigenvalue in
$Cu=\lambda Hu$, so $\kappa=\frac12\lambda_{\max}(H^{-1/2}CH^{-1/2})$. The quotient variable $u$ only identifies a
direction because the quotient is scale invariant. The active directions in Theorem~\ref{thm:unique_second_order} are
the top generalized eigenvectors with the scale that maximizes $\gamma(w)-\frac12w^\top Hw$: if $v$ satisfies
$Cv=\lambda_{\max}Hv$, then
\[
    w=\sqrt{\frac{\lambda_{\max}}{v^\top Hv}}\,v
\]
belongs to $\mathcal W^*$, and all nonzero top generalized eigenvectors give elements of $\mathcal W^*$ in this way.
\end{remark}

\begin{example}[The quadratic loss]
\label{ex:quadratic_second_order}
Consider
\[
    \ell(\theta,\xi)
    =
    \begin{bmatrix}\theta\\\xi\end{bmatrix}^\top
    \begin{bmatrix}Q&S^\top\\ S&R\end{bmatrix}
    \begin{bmatrix}\theta\\\xi\end{bmatrix}
    +
    2
    \begin{bmatrix}\theta\\\xi\end{bmatrix}^\top
    \begin{bmatrix}u\\r\end{bmatrix},
    \qquad
    Q\succ0.
\]
Let $\hat\mu=\mathbb E_{\mathbb P_0}[X]$. Then the unique ERM optimizer is $\theta_0=-Q^{-1}(S^\top\hat\mu+u)$, $H=2Q$, and $B(x)w=2Sw$.
Thus, in the Euclidean $p=2$ case of Remark~\ref{rem:kappa_computation}, $C=4S^\top S$ and the generalized eigenvalue formula gives
$\kappa=\lambda_{\max}(SQ^{-1}S^\top)$. For a general ground norm, the same Rayleigh quotient reads
$\kappa=\sup_{u\ne0}\|Su\|_*^2/(u^\top Qu)$. The next theorem shows that this local expansion is globally exact for any ground norm
and Wasserstein order $p\ge2$.
\end{example}

\begin{theorem}[Exact WDRRO optimality for quadratic losses]
\label{thm:exact_quadratic}
In the setting of Example~\ref{ex:quadratic_second_order}, assume $\Theta=\mathbb R^d$, $\Xi=\mathbb R^n$, and that the Wasserstein ball uses
any norm $\|\cdot\|$ on $\mathbb R^n$ and any order $p\ge2$. Then the ERM policy $\theta_0=-Q^{-1}(S^\top\hat\mu+u)$ is WDRRO-optimal for every
$\delta\ge0$, and $R_\delta(\theta_0)=V_\delta=\delta^2\sup_{\|z\|\le1}z^\top SQ^{-1}S^\top z=\delta^2\kappa$. For the Euclidean ground norm,
this equals $\delta^2\lambda_{\max}(SQ^{-1}S^\top)$.
\end{theorem}

The quadratic loss in Example~\ref{ex:quadratic_second_order} is the loss of a single-stage linear quadratic regulator (LQR).
Theorem~\ref{thm:exact_quadratic} shows that ERM remains globally WDRRO-optimal in this one-stage setting. A subsequent paper 
\cite{fiechtner2026distributionallyrobustregretoptimal} shows that this global ERM optimality is special to the single-stage case: 
for multistage LQR with independent disturbances sharing the same unknown stage law, repeated interaction lets the controller learn about 
that fixed distribution, and ERM need no longer be globally regret-optimal once the horizon exceeds one.

\section{Computational Regimes for WDRRO}\label{sec:problems_of_interest}

The sensitivity analysis in Section~\ref{sec:sensitivity} shows that, in the regular small-radius regime, WDRRO stays close to ERM\@. 
In particular, when the ERM optimizer is unique, Theorem~\ref{thm:unique_second_order} gives zero value sensitivity and 
$o(\delta^2)$-optimal decisions satisfy $\theta_\delta=\theta_0+o(\delta)$. Thus, in regimes where this theorem applies, ERM is already 
first-order optimal for regret. The rest of the paper therefore studies settings where one might actually use WDRRO: primarily 
non-infinitesimal radii, together with nonsmooth losses, and $p=1$ cases that are not settled by the smooth small-radius theory. 
Additionally, we will focus on the data-driven regime where $\mathbb{P}_0$ is an empirical distribution, although all results
in the following also apply to any discrete distribution. The modeling assumptions should at least make the inner
Wasserstein risk tractable in these decision-relevant regimes, so that the remaining difficulty can be isolated as the
outer regret maximization.

For fixed $(\theta,\beta)$, WDRRO evaluates the worst-case risk of 
$\tilde\ell_{\theta,\beta}(\xi)=\ell(\theta,\xi)-\ell(\beta,\xi)$. Large-radius tractability therefore requires finite 
convex reformulations of this inner risk problem that are valid for all radii $\delta$. Such global finite programs are available for 
empirical reference distributions and losses that are pointwise maxima of concave functions 
\citep{mohajerin_esfahani_data-driven_2018, kuhn_wasserstein_2024, zhen_unified_2023}. By contrast, other algorithmic approaches 
are designed for small radii and are typically stochastic-gradient in nature, so they do not provide finite all-radius convex 
programs that can be embedded in the outer minimax over $\theta$ and $\beta$ 
\citep{blanchet_optimal_2021, sinha_certifying_2020}.

To use these finite convex reformulations inside WDRRO, we need the difference $\ell(\theta,\xi)-\ell(\beta,\xi)$ to be representable as a pointwise 
maximum of concave functions in $\xi$. A simple way to ensure this is to make $\ell(\theta,\xi)$ max-affine in $\xi$, so that 
subtracting $\ell(\beta,\xi)$ leaves a maximum of concave piecewise-linear functions. We additionally impose structure in $\theta$ 
and allow a separable bilinear interaction term. The resulting modeling assumptions are as follows.

\begin{assumption}\label{ass:model_choice}\textbf{}
    \begin{itemize}
\item The reference distribution is discrete. Throughout the paper we focus 
on the empirical case $\mathbb{P}_0 = \hat{\mathbb{P}}_N = \frac{1}{N}\sum_{i=1}^N \delta_{\hat\xi_i}$.
All results extend straightforwardly to general discrete reference measures.
        \item The loss function is of the form $\ell(\theta, \xi) = \max_{k\in[K]} \bigl(a_k^\top \xi + b_k^\top \theta + c_k + (d_k \odot \theta)^\top \xi\bigr)$, i.e., a maximum of affine terms with an additional separable bilinear interaction.
    We use $A \in \mathbb{R}^{K\times n}$ and $B \in \mathbb{R}^{K\times d}$ to denote the matrices with rows $a_k^\top$ and $b_k^\top$, respectively, and write $c \in \mathbb{R}^K$ and $D \in \mathbb{R}^{K\times n}$ for the vector and matrix collecting $c_k$ and $d_k$. We assume that $d=n$ whenever $d_k\neq0$.
    \end{itemize}
\end{assumption}

\begin{remark}\label{rem:approx_u}
The above restriction is less limiting than it may appear. On compact domains, continuous jointly convex losses can be 
approximated arbitrarily well by max-affine functions. The form also covers approximations of losses 
$\ell(\theta,\xi)=u(\xi^\top\theta)$ with scalar convex $u$, since $u$ can be approximated on the compact range of 
$\xi^\top\theta$ by a maximum of affine functions, yielding terms of the form $c_k+d_k\xi^\top\theta$.
\end{remark}

Assumption~\ref{ass:model_choice} ensures that the inner worst-case risk $\sup_{\mathbb{P}\in B_\delta^p(\mathbb{P}_0)}\mathbb{E}_{\mathbb{P}}\big[\ell(\theta, X) - \ell(\beta, X)\big]$ is tractable for arbitrary radii $\delta$, via the convex reformulations in \cite{kuhn_wasserstein_2024}. 
However, computing the regret $R(\theta)$ also requires taking the supremum over $\beta$, namely
$R(\theta) = \sup_{\beta\in \Theta}\sup_{\mathbb{P}\in B_\delta^p(\mathbb{P}_0)}\mathbb{E}_{\mathbb{P}}[\ell(\theta, X) - \ell(\beta, X)]$.
As we show next, this additional maximization makes regret evaluation NP-hard even in this tractable inner-risk regime. 
Section~\ref{sec:algorithms} then develops exact and approximate methods that perform well in practice.

\subsection{NP-Hardness}\label{subsec:nphard}

We now show that, even under Assumption~\ref{ass:model_choice}, computing the
regret $R(\theta)$ is NP-hard in general. This is not obvious a priori: for
instance, the classical newsvendor problem is tractable, as discussed in
Section~\ref{sec:motivation}. One might suspect that the hardness is caused by
nonlinear modeling ingredients, such as the bilinear terms
$(d_k\odot\theta)^\top\xi$, a nonlinear transportation penalty with $p>1$, or a
nonlinear ground norm. Our result rules this out. NP-hardness persists even when
$d_k\equiv0$, $p=1$, and the ground norm is $\ell_1$, so all primitive modeling
ingredients are linear or max-affine.

Our proof relies on a dual representation of the worst-case Wasserstein risk.
Applying Theorem~1 of \cite{blanchet_quantifying_2019} to
$\tilde\ell(\xi)=\ell(\theta,\xi)-\ell(\beta,\xi)$ yields

\begin{small}
\begin{align}
    R(\theta)
    &=
    \sup_{\beta\in\Theta}
    \inf_{\lambda \geq 0}
    \left\{
        \lambda\delta^p
        +
        \frac{1}{N}\sum_{i=1}^N
        \sup_{z_i\in \Xi}
        \left[
            \max_{k\in[K]}\bigl(a_k^\top z_i + b_k^\top\theta + c_k\bigr)
            -
            \max_{m\in[K]}\bigl(a_m^\top z_i + b_m^\top\beta + c_m\bigr)
            -
            \lambda \Vert z_i - \hat \xi_i\Vert^p
        \right]
    \right\}.
    \label{eq:regret_data_driven}
\end{align}
\end{small}

The hardness comes from the interaction between the adversarial samples $z_i$ and the benchmark decision $\beta$. For fixed 
$z_i$, the bracketed term is concave in $\beta$, but the supremum over $z_i$ does not preserve concavity because 
$\max_{k\in[K]}(a_k^\top z_i+b_k^\top\theta+c_k)$ is convex in $z_i$. Thus the remaining maximization over $\beta$ can be 
computationally hard.

\begin{theorem}\label{thm:np_hardness_regret}
Consider the class of WDRRO instances in \eqref{eq:regret_data_driven} with
$p=1$, $\ell_1$ ground norm, and max-affine losses with $d_k\equiv0$. The
decision problem of determining whether $R(\theta)\ge \tau$ for a given
instance and a given $\theta\in\Theta$ is NP-hard for the fixed threshold
$\tau=2/3$. Consequently, exact evaluation of $R(\theta)$ is NP-hard in
general.
\end{theorem}

The proof reduces the NP-complete 1-in-3 SAT+ problem to this WDRRO decision problem and is
provided in Appendix~\ref{app2}.

\begin{remark}[Relation to Cho and Yang's minimax-swapped surrogate]
The authors of \cite{cho_wasserstein_2024} propose a relaxation of the WDRRO objective by
swapping the infimum over the Wasserstein dual variable and the supremum over
the benchmark decision. In our notation, their surrogate optimizes
$\inf_{\theta\in\Theta,\lambda\ge0}R^\lambda(\theta)$, where, for fixed
$\lambda$,

\begin{small}
\begin{align}
 R^\lambda(\theta)
 =
 \sup_{\beta\in\Theta}
 \left\{
 \lambda\delta^p
 +
 \frac1N\sum_{i=1}^N
 \sup_{z_i\in\Xi}
 \left[
 \max_{k\in[K]}(a_k^\top z_i+b_k^\top\theta+c_k)
 -
 \max_{m\in[K]}(a_m^\top z_i+b_m^\top\beta+c_m)
 -
 \lambda\|z_i-\hat\xi_i\|^p
 \right]
 \right\}.
 \label{eq:soft_regret_rewritten}
\end{align}
\end{small}

The quantity $R^\lambda(\theta)$ acts as a soft regret.
Using the same construction as in the proof of
Theorem~\ref{thm:np_hardness_regret}, with $\lambda$ fixed to a suitable
rational value, it can be seen that evaluating this soft regret is still
NP-hard, even for $p=1$, the $\ell_1$ ground norm, and max-affine losses with
$d_k\equiv0$. Thus the minimax swap does not by itself remove the evaluation
hardness.

The NP-hardness argument in \cite{cho_wasserstein_2024} is complementary to
ours. Their Section~IV shows intractability of the surrogate minimization
problem $\inf_{\theta,\lambda}R^\lambda(\theta)$ and uses losses of the form
$\ell(\theta,\xi)=|\theta-\xi^\top M\xi|=\max\{\theta-\xi^\top M\xi,-\theta+\xi^\top M\xi\}$, 
i.e., maxima of quadratic functions of the uncertainty. In contrast,
Theorem~\ref{thm:np_hardness_regret} concerns pointwise evaluation of the
\emph{original} WDRRO regret $R(\theta)$, before the minimax swap, and the hardness
already holds for the considerably simpler class of max-affine losses without
bilinear terms.
\end{remark}

\begin{remark}[Comparison with WDRO, RO, and RRO]
It is also instructive to notice that many standard robust formulations admit
tractable reformulations under max-affine losses. For standard WDRO,
Theorem~8 of \cite{kuhn_wasserstein_2024} gives exact conic reformulations for
max-affine losses. When the resulting support-function computations are
tractable, these reformulations are of polynomial size. Robust optimization
(RO) corresponds to the limiting WDRO case with $p=\infty$, $N=1$, and a
sufficiently large radius such that the uncertainty set $\Xi$ is contained in
$B_\delta(\hat\xi_1)$. Under the same representation assumptions on $\Xi$, RO
is therefore also polynomial-time solvable in this setting.

A similar conclusion holds for robust regret optimization (RRO),
\begin{align}
\inf_{\theta\in\Theta}\sup_{\xi\in\Xi}
\left\{
\ell(\theta,\xi)-\inf_{\beta\in\Theta}\ell(\beta,\xi)
\right\}
=
\inf_{\theta\in\Theta}\sup_{\xi\in\Xi}\sup_{\beta\in\Theta}
\{\ell(\theta,\xi)-\ell(\beta,\xi)\}.
\label{eq:rro_rewritten}
\end{align}
For max-affine losses with $d_k\equiv0$, standard duality results under the applicable Slater conditions give the convex reformulation
\[
\begin{aligned}
\inf_{\theta\in\Theta,\ s,\ \mu^1,\ldots,\mu^K}\quad & s\\
\mathrm{s.t.}\quad
& b_k^\top\theta+c_k
+\sigma_{\Xi}(a_k-A^\top\mu^k)
+\sigma_{\Theta}(-B^\top\mu^k)
-c^\top\mu^k
\le s,\qquad k\in[K],\\
& \mu^k\in\mathbb R_+^K,\quad \vec 1_K^\top\mu^k=1,\qquad k\in[K].
\end{aligned}
\]
If $\Theta$ and $\Xi$ admit tractable conic representations, for example if
they are polyhedral, then the support functions above do as well, and the RRO
reformulation is a polynomial-sized conic program. Thus, although worst-case
regret optimization can be NP-hard for more general loss classes
\citep{Poursoltani2019AdjustableRO}, max-affine losses with $d_k\equiv0$
constitute a regime where WDRO, RO, and RRO remain tractable while WDRRO regret
evaluation is already NP-hard.
\end{remark}

\section{Algorithmic Solutions for the Max-Affine Loss Model Class}\label{sec:algorithms}

The previous section showed that regret evaluation is NP-hard even for
$\ell(\theta,\xi)=\max_{k\in[K]}a_k^\top\xi+b_k^\top\theta+c_k$. Thus WDRRO cannot be solved in polynomial time 
over the full model class unless $\mathrm{P}=\mathrm{NP}$. This section therefore has two goals: first, to give an exact 
finite formulation that can be used as a benchmark on moderate-sized instances, and second, to develop convex relaxations 
that are the main computational tool for larger problems. We first derive a finite-dimensional nonconvex formulation of 
the regret using Wasserstein duality
\citep{kuhn_wasserstein_2024, zhen_unified_2023, mohajerin_esfahani_data-driven_2018}. We then develop a convex relaxation 
for the case $d_k=0$ with an $O(\delta)$ gap, followed by a local-cover extension for $d_k\neq0$ with an additional 
$O(\rho\delta)$ approximation error, where $\rho$ is the covering radius of the chosen benchmark centers.

\subsection{Finite Reformulation of the Regret}

The first step is to remove the infinite-dimensional optimization over $\mathbb P$. The standard Wasserstein duality in \eqref{eq:regret_data_driven}
does this for fixed $(\theta,\beta)$, but it produces a finite minimization problem, so the outer supremum over
$\beta$ cannot simply be appended. We therefore use the bidual reformulation in \cite[Theorem~9]{kuhn_wasserstein_2024},
which represents the worst-case risk as a finite maximization problem and allows $\beta$ to be included as a decision
variable. The theorem applies when $\ell(\theta,\xi)-\ell(\beta,\xi)$ is a maximum of concave functions of $\xi$.
This is the reason for the model choice in Section~\ref{sec:problems_of_interest}: max-affineness of $\ell$ in $\xi$
ensures that the regret loss $\tilde\ell(\xi)=\ell(\theta,\xi)-\ell(\beta,\xi)$ remains a maximum of concave piecewise-linear functions.
Indeed, $\tilde\ell$ can be written as $\tilde\ell(\xi)=\max_{k\in[K]} \tilde{\ell}_k(\xi)$ with
\[
    \tilde{\ell}_k(\xi) = (a_k + d_k\odot \theta)^\top \xi + b_k^\top \theta + c_k - \max_{m\in[K]}\left\{(a_m + d_m\odot\beta)^\top \xi + b_m^\top \beta + c_m\right\}.
\]
Using the bidual reformulation in \cite[Theorem~9]{kuhn_wasserstein_2024}, introducing epigraphical variables, and
including $\beta$ in the maximization gives the following formulation.

\begin{corollary}\label{cor:finite_regret_formulation}
For an empirical reference distribution $\hat{\mathbb{P}}_N$, the regret $R(\theta)$ can be computed as the optimal value of
{\small
\begin{equation}\label{eq:kuhn_dual_applied}
\begin{alignedat}{2}
\sup\quad
& \frac{1}{N}\sum_{i=1}^N \sum_{k=1}^K
    \Big[
        \gamma_{ik}
        \Big(
            (a_k + d_k \odot \theta)^\top \hat{\xi}_i
            + b_k^\top \theta
            + c_k
        \Big)
        + (a_k + d_k \odot \theta)^\top q_{ik}
        - t_{ik}
    \Big]
\\
\mathrm{s.t.}\quad
& \beta \in \Theta,
&&
\\
& \gamma_{ik} \in \mathbb{R}_+,\quad
  q_{ik} \in \mathbb{R}^n,\quad
  t_{ik} \in \mathbb{R},
&& \forall i \in [N],\; k \in [K],
\\
& \hat{\xi}_i + \frac{q_{ik}}{\gamma_{ik}} \in \Xi,
&& \forall i \in [N],\; k \in [K],
\\
& \sum_{k=1}^K \gamma_{ik} = 1,
&& \forall i \in [N],
\\
& \frac{1}{N}\sum_{i=1}^N \sum_{k=1}^K
    \gamma_{ik}
    \left\|
        \frac{q_{ik}}{\gamma_{ik}}
    \right\|^p
    \leq \delta^p,
&&
\\
& t_{ik}
    \geq
    \gamma_{ik}
    \Big(
        (a_m + d_m \odot \beta)^\top \hat{\xi}_i
        + b_m^\top \beta
        + c_m
    \Big)
    + (a_m + d_m \odot \beta)^\top q_{ik},
&& \forall i \in [N],\; k,m \in [K].
\end{alignedat}
\end{equation}
}
\end{corollary}

The above program is convex in all variables except for the bilinear terms $\gamma_{ik}\beta$ and $\beta^\top q_{ik}$
in the last constraint. These bilinear couplings are precisely what make the problem NP-hard in general. Moreover,
since bilinearity appears in the constraints rather than the objective, the feasible set is not a polytope even in
the case $p=1$ and $\Vert \cdot\Vert\in\{\Vert\cdot\Vert_1, \Vert\cdot\Vert_\infty\}$, which prevents the direct use
of Benders-type cutting-plane methods of the kind employed in \cite{cho_wasserstein_2024}.

\subsection{Exact Algorithms}\label{sec:exact_algs}

Since $R(\theta)$ is convex in $\theta$, one can minimize it by subgradient or cutting-plane methods. Both require solving
\eqref{eq:kuhn_dual_applied} at the current $\theta$. If $(\beta^*(\theta),\gamma^*(\theta),q^*(\theta))$ solves
\eqref{eq:kuhn_dual_applied}, Danskin's theorem \cite[Theorem~9.26]{shapiro_lectures_2009} gives the subgradient
\begin{align}\label{eq:subgradient}
    g(\theta) = \frac{1}{N}\sum_{i=1}^N\sum_{k=1}^K\gamma_{ik}^*(\theta)\bigl(b_k + d_k \odot \hat\xi_i\bigr) + q_{ik}^*(\theta)\odot d_k.
\end{align}
In the experiments we use cutting planes, with interval bisection in one dimension and Chebyshev centers
in low-dimensional polyhedral sets. The main bottleneck is that each oracle call requires a global
solve of the NP-hard problem \eqref{eq:kuhn_dual_applied}.

To the best of our knowledge, the formulation in \eqref{eq:kuhn_dual_applied},
together with the exact solution procedures described in this subsection, constitutes the first exact algorithm
for computing WDRRO in the literature. Previous work---most notably \cite{cho_wasserstein_2024}---
focuses on algorithms for relaxations of the regret objective and therefore does not provide a method for evaluating the
true regret $R(\theta)$ or for identifying an exact regret-optimal policy. As a result, their empirical results cannot
rely on a ground-truth WDRRO benchmark. By contrast, our exact formulation enables us to compute such a benchmark and
thereby rigorously assess the quality of the convex relaxation developed next. This benchmarking capability is the primary
motivation for including the exact method in our framework.

\subsection{Convex Relaxations}\label{sec:convex_relaxations}

We now turn to convex relaxations of the regret, which sacrifice exactness but enable efficient computation and direct joint
optimization over $\theta$ and auxiliary variables in a single convex program.

\subsubsection{\texorpdfstring{Relaxation for the case $d_k=0$}{Relaxation for the case dk = 0}}\label{subsec:dk_zero}

We first consider the case where there are no bilinear terms in the loss, i.e., $d_k=0$ for all $k$.
In this setting, the only bilinear terms in \eqref{eq:kuhn_dual_applied} are the products $\gamma_{ik}\beta$ in the constraints,
while terms of the form $\beta^\top q_{ik}$ are absent. This structure admits a tractable convex relaxation.

For each product $\gamma_{ik}\beta$, introduce a new variable $z_{ik}$ and impose the constraints
\begin{align*}
    \sum_{k=1}^K z_{ik} = \beta \quad \forall i\in[N],\quad \frac{z_{ik}}{\gamma_{ik}} \in \Theta \quad \forall i\in[N],\; k\in[K].
\end{align*}
For $\gamma_{ik}=0$, the second constraint is interpreted as requiring $z_{ik}$ to be a recession direction of $\Theta$.
They hold automatically when $z_{ik}=\gamma_{ik}\beta$, so keeping only these constraints relaxes the bilinear equality.
Substituting $z_{ik}$ into \eqref{eq:kuhn_dual_applied} yields the relaxed maximization problem
\begin{align}\label{eq:regret_opt_problem}
\begin{alignedat}{3}
    \sup & \quad \frac{1}{N} \sum_{i=1}^N \sum_{k=1}^K
    \left[
        \gamma_{ik}\left( a_k^\top \hat{\xi}_i + b_k^\top\theta + c_k\right)
        + a_k^\top q_{ik} - t_{ik}
    \right] &\\
    \mathrm{s.t.} &\quad \beta\in\mathbb{R}^d,\; \gamma_{ik} \in \mathbb{R}_+, \; q_{ik} \in \mathbb{R}^n, \; t_{ik}\in\mathbb{R}, \; z_{ik}\in\mathbb{R}^d & \forall i \in [N], \; \forall k\in [K],\\
    &\quad \beta\in\Theta, &\\
    &\quad \hat{\xi}_i + \frac{q_{ik}}{\gamma_{ik}} \in \Xi & \forall i \in [N], \; \forall k \in [K], \\
    &\quad \frac{z_{ik}}{\gamma_{ik}} \in \Theta & \forall i \in [N], \; \forall k \in [K], \\
    &\quad \sum_{k=1}^K \gamma_{ik} = 1 & \forall i \in [N], \\
    &\quad \sum_{k=1}^K z_{ik} = \beta & \forall i \in [N], \\
    &\quad \frac{1}{N} \sum_{i=1}^N \sum_{k=1}^K \gamma_{ik} \left\| \frac{q_{ik}}{\gamma_{ik}} \right\|^p \leq \delta^p, & \\
    &\quad t_{ik}\geq \gamma_{ik} \left( a_m^\top \hat{\xi}_i + c_m\right) + b_m^\top z_{ik} + a_m^\top q_{ik} & \forall i \in [N], \; \forall k, m \in [K].
\end{alignedat}
\end{align}

Taking the dual of \eqref{eq:regret_opt_problem} yields the convex minimization problem stated in the following lemma,
whose proof is given in Appendix~\ref{app:algs_proofs}.

\begin{lemma}\label{lemma:relaxation_full}
Assume that $\Xi$ and $\Theta$ are non-empty closed convex sets. For $p>1$, the dual of \eqref{eq:regret_opt_problem} is
given by the convex program
\begin{align}\label{eq:full_relaxation}
\begin{alignedat}{2}
\inf\quad & \lambda\delta^p + \frac{1}{N} \sum_{i=1}^N s_i + \sigma_\Theta\left( \frac{1}{N} \sum_{i=1}^N \tau_i \right)\\[1ex]
\mathrm{s.t.}\quad
  & \lambda \in \mathbb{R}_+, \quad s_i \in \mathbb{R}, \quad \mu_{ik} \in \mathbb{R}^K_+, \quad \tau_i \in \mathbb{R}^d, \quad u_{ik} \in \mathbb{R}^n, && \forall\, i \in [N],\, k \in [K],\\[1ex]
  & \mu_{ik}^\top \vec{1}_K = 1, && \forall\, i \in [N],\, k \in [K],\\[1ex]
  & a_k^\top \hat{\xi}_i + b_k^\top \theta + c_k - \mu_{ik}^\top\bigl( A\, \hat{\xi}_i + c \bigr)
  - u_{ik}^\top \hat{\xi}_i + \phi(q)\lambda \left\Vert \frac{a_k - A^\top \mu_{ik} - u_{ik}}{\lambda} \right\Vert_*^q\\[1ex]
  & \quad {}+ \sigma_{\Xi}\bigl(u_{ik}\bigr) + \sigma_\Theta\!\Bigl( -\bigl(B^\top \mu_{ik} + \tau_i\bigr) \Bigr)
  \leq s_i, && \forall\, i \in [N],\, k \in [K],
\end{alignedat}
\end{align}
where $q=\frac{p}{p-1}$ and $\phi(q) = \frac{(q-1)^{q-1}}{q^q}$. For $p=1$, the last constraint is replaced, for all
$i\in[N]$ and $k\in[K]$, by the two constraints
\begin{align*}
    s_i&\geq a_k^\top \hat{\xi}_i + b_k^\top \theta + c_k - \mu_{ik}^\top\bigl( A\, \hat{\xi}_i + c \bigr)
  - u_{ik}^\top \hat{\xi}_i + \sigma_{\Xi}\bigl(u_{ik}\bigr) + \sigma_\Theta\!\Bigl( -\bigl(B^\top \mu_{ik} + \tau_i\bigr) \Bigr),\\
  \lambda&\geq \Vert a_k - A^\top \mu_{ik} - u_{ik} \Vert_*.
\end{align*}
\end{lemma}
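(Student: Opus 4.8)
The plan is to obtain \eqref{eq:full_relaxation} as the Lagrangian dual of the convex maximization problem \eqref{eq:regret_opt_problem} and then to invoke a constraint qualification to conclude zero duality gap, so that the dual optimal value equals $R(\theta)$ rather than merely bounding it. I would attach the following multipliers to the constraints of \eqref{eq:regret_opt_problem}: $\lambda\in\mathbb{R}_+$ to the Wasserstein budget $\tfrac1N\sum_{i,k}\gamma_{ik}\Vert q_{ik}/\gamma_{ik}\Vert^p\le\delta^p$; $\mu_{ik}\in\mathbb{R}_+^K$ to the block of $K$ epigraph inequalities indexed by $m$ in the last constraint; a free scalar (which becomes $s_i$) to each normalization $\sum_k\gamma_{ik}=1$; and a free vector $\tau_i\in\mathbb{R}^d$ to each coupling equality $\sum_k z_{ik}=\beta$. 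The set memberships $\hat\xi_i+q_{ik}/\gamma_{ik}\in\Xi$, $z_{ik}/\gamma_{ik}\in\Theta$ and $\beta\in\Theta$ are retained as explicit constraints and turned into support-function terms $\sigma_\Xi$, $\sigma_\Theta$ only when the corresponding variables are maximized out. The non-empty interior of $\Xi$, together with convexity of $\Theta$, supplies the Slater-type condition needed for strong duality and for the support-function representations to be exact.

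Next I would maximize the Lagrangian over the primal variables in groups. The $t_{ik}$ appear linearly (coefficient $-1$ in the objective, up to the $1/N$ scaling, and $+\mu_{ik,m}$ in the $m$-th epigraph constraint), so maximizing over the free $t_{ik}$ forces the stationarity condition $\mu_{ik}^\top\vec{1}_K=1$ and contributes nothing else, yielding the simplex constraint on $\mu_{ik}$. The decision $\beta$ enters only through the coupling multiplier and the membership $\beta\in\Theta$, producing $\sup_{\beta\in\Theta}\{(\tfrac1N\sum_i\tau_i)^\top\beta\}=\sigma_\Theta(\tfrac1N\sum_i\tau_i)$, which is exactly the last term of the dual objective. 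The remaining variables $\gamma_{ik}$, $q_{ik}$, $z_{ik}$ group naturally, because the budget term, the $\Xi$- and $\Theta$-memberships, and the bilinear contributions are all positively homogeneous of degree one in $(\gamma_{ik},q_{ik},z_{ik})$; the substitution $w=q_{ik}/\gamma_{ik}$ and $\theta'=z_{ik}/\gamma_{ik}$ factors out $\gamma_{ik}$ and reduces the inner problem to a maximization over $w$ with $\hat\xi_i+w\in\Xi$ and over $\theta'\in\Theta$. The $\theta'$-maximization immediately produces the term $\sigma_\Theta(-(B^\top\mu_{ik}+\tau_i))$, while dualizing $\hat\xi_i+w\in\Xi$ through $u_{ik}$ contributes $\sigma_\Xi(u_{ik})-u_{ik}^\top\hat\xi_i$ and shifts the effective linear coefficient of $w$ to $a_k-A^\top\mu_{ik}-u_{ik}$.

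The crux is the maximization over $w$ of an expression of the form $v^\top w-\lambda\Vert w\Vert^p$ with $v=a_k-A^\top\mu_{ik}-u_{ik}$. For $p>1$, aligning $w$ with the dual norm and optimizing the scalar radius gives $\sup_w\{v^\top w-\lambda\Vert w\Vert^p\}=\phi(q)\lambda\Vert v/\lambda\Vert_*^q$ with $q=\tfrac{p}{p-1}$ and $\phi(q)=\tfrac{(q-1)^{q-1}}{q^q}$, which is precisely where the constant $\phi(q)$ and the power $\Vert\cdot\Vert_*^q$ enter the dual constraint. For $p=1$ the same supremum equals $0$ if $\Vert v\Vert_*\le\lambda$ and $+\infty$ otherwise, explaining why the single epigraph inequality splits into the two displayed constraints in the $p=1$ case. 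Collecting all contributions, the coefficient of each $\gamma_{ik}$ after the inner maximizations is exactly the left-hand side of the dual epigraph constraint; maximizing over $\gamma_{ik}\ge0$ subject to $\sum_k\gamma_{ik}=1$ then bounds this coefficient by $s_i$ and produces the term $\tfrac1N\sum_i s_i$ in the objective, completing the derivation.

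I expect the main obstacle to be the bookkeeping of the degree-one homogeneity argument together with the conjugate computation: making the perspective substitution $w=q_{ik}/\gamma_{ik}$, $\theta'=z_{ik}/\gamma_{ik}$ rigorous at the boundary $\gamma_{ik}=0$, where the values are governed by recession directions and support functions according to the conventions set up after Theorem \ref{thm:kuhn_finite_convex}, and tracking the $1/N$ scaling so that $s_i$ and $\tau_i$ appear with the correct normalization. A secondary point requiring care is confirming that the non-empty interior of $\Xi$ indeed yields strong duality, so that the infimum in \eqref{eq:full_relaxation} equals $R(\theta)$ and not just an upper bound.
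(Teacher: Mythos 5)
Your proposal is correct and takes essentially the same route as the paper's proof: the same multiplier assignments ($\lambda$ to the budget, $\mu_{ik}$ to the epigraph block, $s_i$ to the normalisation, $\tau_i$ to the coupling $\sum_k z_{ik}=\beta$), the same perspective substitution $w=q_{ik}/\gamma_{ik}$, $\theta'=z_{ik}/\gamma_{ik}$, and the same conjugate computation $\sup_w\{v^\top w-\lambda\Vert w\Vert^p\}=\phi(q)\lambda\Vert v/\lambda\Vert_*^q$ explaining both the constant $\phi(q)$ and the split into two constraints when $p=1$. The only cosmetic difference is that you introduce $u_{ik}$ by directly dualizing the membership $\hat\xi_i+w\in\Xi$, whereas the paper obtains the same term via the inf-convolution theorem for Fenchel conjugates of sums; both devices hinge on the non-empty interior of $\Xi$, exactly as you flag.
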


The following corollary specializes this relaxation to the case where $\Xi$ and $\Theta$ are polyhedral.

\begin{corollary}\label{corr:relaxation_full_polyhedral}
    Let $\Xi = \{x\, \vert \, Px \leq r\}$ and $\Theta = \{\theta\, \vert \, M\theta \leq w\}$ with $P\in\mathbb{R}^{m_1\times n}$, $r\in\mathbb{R}^{m_1}$, $M\in\mathbb{R}^{m_2\times d}$, $w\in\mathbb{R}^{m_2}$. For $p>1$, the program \eqref{eq:full_relaxation} becomes
\begin{align}
\begin{alignedat}{2}\label{eq:full_relaxation_polyhedral}
\inf\quad & \lambda\delta^p + \frac{1}{N} \sum_{i=1}^N s_i + w^\top\eta\\[1ex]
\mathrm{s.t.}\quad
  & \lambda \in \mathbb{R}_+,\quad s_i \in \mathbb{R},\quad \mu_{ik} \in \mathbb{R}^K_+,\quad \tau_i \in \mathbb{R}^d, && \forall\, i\in[N],\, k\in[K],\\
  & \zeta_{ik}\in\mathbb{R}^{m_1}_+,\quad \eta\in\mathbb{R}^{m_2}_+,\quad \nu_{ik} \in \mathbb{R}^{m_2}_+,
  && \forall\, i\in[N],\, k\in[K],\\[1ex]
  & \mu_{ik}^\top \vec{1}_K = 1, && \forall\, i\in[N],\, k\in[K],\\[1ex]
  & M^\top\nu_{ik}=-(B^\top\mu_{ik}+\tau_i), && \forall\, i\in[N],\, k\in[K],\\[1ex]
& M^\top \eta = \frac{1}{N}\sum_{i=1}^N\tau_i,\\[1ex]
  & s_i\geq a_k^\top \hat{\xi}_i + b_k^\top \theta + c_k - \mu_{ik}^\top\bigl( A\, \hat{\xi}_i + c \bigr)
  + \zeta_{ik}^\top(r - P\hat\xi_i)\\
  &\quad\quad  + w^\top \nu_{ik} + \phi(q)\lambda \left\Vert \frac{a_k - A^\top \mu_{ik} - P^\top \zeta_{ik}}{\lambda} \right\Vert_*^q, && \forall\, i\in[N],\, k\in[K].
\end{alignedat}
\end{align}
For $p=1$, the last constraint is replaced, for all $i\in[N]$ and $k\in[K]$, by
\begin{align*}
    s_i&\geq a_k^\top \hat{\xi}_i + b_k^\top \theta + c_k - \mu_{ik}^\top\bigl( A\, \hat{\xi}_i + c \bigr)
  + \zeta_{ik}^\top(r - P\hat\xi_i) + w^\top \nu_{ik},\\
  \lambda&\geq\left\Vert a_k - A^\top \mu_{ik} - P^\top \zeta_{ik}\right\Vert_*.
\end{align*}
\end{corollary}

Since the program \eqref{eq:full_relaxation} and its specialization in \eqref{eq:full_relaxation_polyhedral} are jointly convex
in all decision variables, one can therefore treat $\theta$ as a decision variable and add
the constraint $\theta\in\Theta$, which yields a single convex program whose optimal solution provides an approximation of
the regret-optimal policy.

The relaxation in \cite{cho_wasserstein_2024} is sandwiched between the ex-ante and ex-post
regrets and is exact whenever the radius is $0$ or larger than the diameter of the distribution support.
The next results establish analogous properties for our relaxation. Thus, our relaxation and that of \cite{cho_wasserstein_2024}
share similar bounding and exactness guarantees, but our formulation leads to a convex program that can be solved directly,
whereas their relaxation remains NP-hard. Moreover, their tractable algorithms based on Benders-type decompositions require
$p=1$, $\Vert\cdot\Vert$ equal to the $\ell_1$- or $\ell_\infty$-norm, and a compact polyhedral $\Theta$.
Our relaxation applies to any $p$, any norm defining the Wasserstein distance, and unbounded $\Theta$.
The proofs of the following results are given in Appendix~\ref{app:algs_proofs}.

\begin{theorem}\label{thm:relax_bound}
    The relaxation in \eqref{eq:full_relaxation} is upper bounded by the ex-post regret defined in \eqref{eq:ex_post_regret}.
\end{theorem}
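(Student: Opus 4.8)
The plan is to prove the inequality for the primal relaxation \eqref{eq:regret_opt_problem} and then transfer it to the dual \eqref{eq:full_relaxation} by duality. By Lemma \ref{lemma:relaxation_full} the program \eqref{eq:full_relaxation} is the dual of \eqref{eq:regret_opt_problem}, and under the stated regularity (in particular $\Xi$ having non-empty interior) strong duality holds, so the value of \eqref{eq:full_relaxation} equals the optimal value of the maximization problem \eqref{eq:regret_opt_problem}. It therefore suffices to show that the optimal value of \eqref{eq:regret_opt_problem} is at most $R^{\mathrm{post}}(\theta)$. First I would fix an arbitrary feasible point $(\beta, \{\gamma_{ik}\}, \{q_{ik}\}, \{t_{ik}\}, \{z_{ik}\})$ of \eqref{eq:regret_opt_problem}, taking each $t_{ik}$ at its lower bound (this only increases the objective), and perform the change of variables suggested by the perspective structure: for $\gamma_{ik}>0$ set $y_{ik} := \hat{\xi}_i + q_{ik}/\gamma_{ik} \in \Xi$ and $\beta_{ik} := z_{ik}/\gamma_{ik} \in \Theta$, with membership guaranteed by the constraints $\hat{\xi}_i + q_{ik}/\gamma_{ik}\in\Xi$ and $z_{ik}/\gamma_{ik}\in\Theta$. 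A short computation using $\sum_k \gamma_{ik}=1$ then rewrites the $(i,k)$ summand of the objective as $\gamma_{ik}\big(a_k^\top y_{ik} + b_k^\top\theta + c_k\big) - t_{ik}$ and the active last constraint as $t_{ik} = \gamma_{ik}\,\ell(\beta_{ik}, y_{ik})$.

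The heart of the argument is a pointwise estimate. Since $a_k^\top y_{ik} + b_k^\top\theta + c_k \le \max_{k'}\{a_{k'}^\top y_{ik} + b_{k'}^\top\theta + c_{k'}\} = \ell(\theta, y_{ik})$ and $\ell(\beta_{ik}, y_{ik}) \ge \inf_{\beta\in\Theta}\ell(\beta, y_{ik})$, each summand is bounded above by
\[
\gamma_{ik}\Big(\ell(\theta, y_{ik}) - \inf_{\beta\in\Theta}\ell(\beta, y_{ik})\Big) = \gamma_{ik}\, g(y_{ik}),
\]
where $g(x) := \ell(\theta, x) - \inf_{\beta\in\Theta}\ell(\beta, x)$ is exactly the ex-post integrand in \eqref{eq:ex_post_regret}. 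Summing over $i$ and $k$ shows the objective is at most $\tfrac1N\sum_{i=1}^N\sum_{k=1}^K \gamma_{ik}\, g(y_{ik})$. The conceptual content here is that the relaxation imposes a common benchmark $\beta$ only through the averaged coupling $\sum_k z_{ik}=\beta$; the estimate above has already discarded this coupling and replaced it by a per-$(i,k)$ benchmark $\beta_{ik}$, and dropping the coupling entirely is precisely what the pointwise infimum over $\beta$ in the ex-post regret allows.

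To finish, I would exhibit one distribution in the Wasserstein ball attaining this quantity. Define $\mathbb{P} := \tfrac1N\sum_{i=1}^N\sum_{k=1}^K \gamma_{ik}\,\delta_{y_{ik}}$, which is a probability measure on $\Xi$ because $\tfrac1N\sum_{i}\sum_k \gamma_{ik}=1$, and satisfies $\mathbb{E}_{\mathbb{P}}[g(X)] = \tfrac1N\sum_{i,k}\gamma_{ik}g(y_{ik})$. The transport plan sending mass $\gamma_{ik}/N$ from $\hat{\xi}_i$ to $y_{ik}$ has first marginal $\hat{\mathbb{P}}_N$ (since $\sum_k\gamma_{ik}=1$), second marginal $\mathbb{P}$, and cost $\tfrac1N\sum_{i,k}\gamma_{ik}\|q_{ik}/\gamma_{ik}\|^p \le \delta^p$ by the budget constraint of \eqref{eq:regret_opt_problem}; hence $\mathcal{W}_p(\mathbb{P}, \hat{\mathbb{P}}_N)\le\delta$ and $\mathbb{P}\in B_\delta^p(\hat{\mathbb{P}}_N)$. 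Therefore the objective is bounded by $\mathbb{E}_{\mathbb{P}}[g(X)] \le \sup_{\mathbb{Q}\in B_\delta^p(\hat{\mathbb{P}}_N)}\mathbb{E}_{\mathbb{Q}}[g(X)] = R^{\mathrm{post}}(\theta)$, which is the claim.

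The step requiring the most care is the degenerate regime $\gamma_{ik}=0$, where $y_{ik}$ and $\beta_{ik}$ are not defined by ordinary division and must be read through the recession-cone / perspective conventions of Theorem \ref{thm:kuhn_finite_convex}: there $q_{ik}\in\operatorname{rec}(\Xi)$, $z_{ik}\in\operatorname{rec}(\Theta)$, and the products $\gamma_{ik}\ell(\cdot)$ are the upper-semicontinuous limits as $\gamma_{ik}\downarrow 0$. I expect to handle these terms by a limiting argument, approximating a feasible point having some $\gamma_{ik}=0$ by feasible points with all weights strictly positive, applying the bound above, and passing to the limit; closedness of $B_\delta^p(\hat{\mathbb{P}}_N)$ keeps the limiting distribution admissible, and the perspective conventions ensure the inequalities survive the limit.
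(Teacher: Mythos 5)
Your proof is correct, but it takes a genuinely different route from the paper's. The paper works entirely on the dual side: starting from the intermediate form of \eqref{eq:full_relaxation} in the proof of Lemma \ref{lemma:relaxation_full}, it fixes the feasible choice $\tau_i=0$ (which upper bounds the infimum) and then recognises the restricted program as the Blanchet--Murthy dual of the worst-case problem for $g(x)=\ell(\theta,x)-\inf_{\beta\in\Theta}\ell(\beta,x)$, invoking strong duality from \cite{blanchet_quantifying_2019} to identify its value with $R^{\mathrm{post}}(\theta)$. You instead pass to the primal \eqref{eq:regret_opt_problem} via the strong duality already established in Lemma \ref{lemma:relaxation_full} (this step is genuinely needed --- weak duality alone points the wrong way, since the primal supremum only lower bounds the dual infimum --- and your appeal to the Slater argument in the lemma's proof covers it), and then give a constructive witness: the change of variables $y_{ik}=\hat\xi_i+q_{ik}/\gamma_{ik}$, $\beta_{ik}=z_{ik}/\gamma_{ik}$, the pointwise bound by $\gamma_{ik}\,g(y_{ik})$, and the explicit transport plan showing $\mathbb{P}=\tfrac1N\sum_{i,k}\gamma_{ik}\delta_{y_{ik}}\in B_\delta^p(\hat{\mathbb{P}}_N)$. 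It is worth noting that the two arguments are exact mirror images: $\tau_i$ is precisely the Lagrange multiplier of the coupling constraint $\sum_k z_{ik}=\beta$, so the paper's choice $\tau_i=0$ is the dual shadow of your step of discarding that coupling in favour of per-$(i,k)$ benchmarks $\beta_{ik}$. What your route buys is that it replaces the appeal to Wasserstein DRO strong duality for the ex-post problem by the trivial direction (a feasible distribution lower bounds the supremum), making the bound elementary once Lemma \ref{lemma:relaxation_full} is in place; the paper's route is shorter because it recycles machinery already on the page. The one loose end you correctly flag is the degenerate regime $\gamma_{ik}=0$; your limiting argument is repairable but can in fact be avoided: for $p>1$ the convention $0\Vert q_{ik}/0\Vert^p=\lim_{\gamma\downarrow0}\gamma\Vert q_{ik}/\gamma\Vert^p$ together with the budget forces $q_{ik}=0$, and then a zero-weight term contributes $-t_{ik}\leq-\max_m b_m^\top z_{ik}$ with $z_{ik}\in\operatorname{rec}(\Theta)$, which is nonpositive unless $\max_m b_m^\top z_{ik}<0$, in which case $\inf_{\beta\in\Theta}\ell(\beta,x)=-\infty$ and $R^{\mathrm{post}}(\theta)=+\infty$, so the claimed inequality is vacuous there anyway.
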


\begin{theorem}\label{thm:exact_relaxation}
    For $\delta=0$ and for $\delta\geq D(\Xi) := \sup_{\xi\neq\xi'}\Vert \xi - \xi'\Vert$, the relaxation
    in \eqref{eq:full_relaxation} is exact. In the latter case, ex-ante and ex-post regret coincide.
\end{theorem}

We additionally provide an explicit bound on the relaxation gap in terms of the Wasserstein radius $\delta$.

\begin{theorem}\label{thm:relax_gap_delta}
    Assume $p\geq 1$ and that $\Xi$ and $\Theta$ are compact. Let $\widehat R(\theta)$ denote the
    optimal value of the relaxation \eqref{eq:regret_opt_problem}, equivalently of its dual \eqref{eq:full_relaxation}, and let
    $R(\theta)$ denote the true ex-ante regret. If $L_\xi = \max_{k\in[K]}\Vert a_k\Vert_*$, then for every $\theta\in\Theta$ and every $\delta\geq0$ the bounds
    $0\leq \widehat R(\theta)-R(\theta)\leq 2L_\xi\delta$ hold.
\end{theorem}

\subsubsection{\texorpdfstring{Relaxation for the case $d_k\neq0$}{Relaxation for the case dk not equal 0}}\label{subsec:dk_not_zero}

We now extend the relaxation to the general case $d_k \neq 0$, which covers approximations of losses such as
$\ell(\theta,\xi)=u(\theta^\top\xi)$ as discussed in Remark~\ref{rem:approx_u}. The obstacle is the additional product $(d_m\odot\beta)^\top q_{ik}$, for which
the $z_{ik}=\gamma_{ik}\beta$ relaxation used above has no useful analogue. We therefore localize the adversarial decision:
choose centers $\beta_1,\ldots,\beta_L\in\Theta$ with covering radius
$\rho=\mathrm{rad}(\{\beta_1,\ldots,\beta_L\},\Theta,\Vert\cdot\Vert_*)$. By definition of $\rho$, this gives the exact
cover-based reformulation of the regret minimization problem
\begin{align}
    \inf_{\theta, t} \, t\quad \mathrm{s.t.}\,
    \sup_{\substack{\mathbb{P}\in B_\delta^p(\mathbb{P}_0), \; \Delta\beta_l: \\
    \beta_l + \Delta\beta_l\in \Theta,\; \Vert\Delta\beta_l\Vert_*\leq \rho}}
    \mathbb{E}_\mathbb{P}[\ell(\theta, X) - \ell(\beta_l + \Delta\beta_l, X)]
    \leq t\quad \forall l\in[L].
    \label{eq:approx_cover_local}
\end{align}
Each supremum in the constraints of \eqref{eq:approx_cover_local} can be written with the same bidual formulation as
\eqref{eq:kuhn_dual_applied}, with $\beta$ replaced by $\beta_l+\Delta\beta_l$ and
$\Delta\beta_l$ constrained to $\mathcal C_l=(\Theta-\beta_l)\cap\{\Delta:\|\Delta\|_*\le\rho\}$.
After this substitution, the new difficulty is again the cross term $(d_m\odot\Delta\beta_l)^\top q_{ik}$. Our approximation first
makes use of the fact that $\Vert\Delta\beta_l\Vert_*$ is small and drops this cross term.
Once it is removed, the local problem has the same structure as the $d_k=0$ case, with
$\Delta\beta_l$ as the benchmark variable over $\mathcal C_l$. We can therefore introduce the relaxation
$z_{ikl}=\gamma_{ik}\Delta\beta_l$ over $\mathcal C_l$ and dualize as above. 

To state the error resulting from dropping the cross term, define the operator norm
\[
    L_D=\max_{m\in[K]}\sup_{\Delta\neq0}\frac{\|d_m\odot\Delta\|_*}{\|\Delta\|_*}.
\]
For $\ell_r$ ground norms this reduces to $\max_m\|d_m\|_\infty$. Since $\|\Delta\beta_l\|_*\le\rho$, each dropped cross
term satisfies, with the perspective convention for zero-weight terms,
\[
    |(d_m\odot\Delta\beta_l)^\top q_{ik}|
    \le
    L_D\rho\,\gamma_{ik}\left\|\frac{q_{ik}}{\gamma_{ik}}\right\|.
\]
Averaging over $i,k$, Jensen's inequality using $N^{-1}\sum_{i,k}\gamma_{ik}=1$, and the transport constraint gives
\[
    \frac{L_D\rho}{N}\sum_{i=1}^N\sum_{k=1}^K \gamma_{ik}
    \left\|\frac{q_{ik}}{\gamma_{ik}}\right\|
    \le
    L_D\rho
    \left(
        \frac1N\sum_{i=1}^N\sum_{k=1}^K \gamma_{ik}
        \left\|\frac{q_{ik}}{\gamma_{ik}}\right\|^p
    \right)^{1/p}
    \le L_D\rho\delta.
\]
Thus this first approximation changes each $l$-indexed worst-case regret term in
\eqref{eq:approx_cover_local} by at most $L_D\rho\delta$. The following lemma gives the resulting polyhedral
formulation and combines this error with the relaxation gap from the $d_k=0$ step. We omit the proof
of the formulation because it is analogous to the duality derivations above.

\begin{lemma}\label{lem:local_relaxation_polyhedral}
    Let $\Xi=\{\xi\in\mathbb{R}^n\mid P\xi\leq r\}$ and
    $\Theta=\{\theta\in\mathbb{R}^d\mid M\theta\leq w\}$, where
    $P\in\mathbb{R}^{m_1\times n}$, $r\in\mathbb{R}^{m_1}$, $M\in\mathbb{R}^{m_2\times d}$, and
    $w\in\mathbb{R}^{m_2}$.
    Let $\beta_1,\ldots,\beta_L\in\Theta$ be chosen such that
    $\rho=\mathrm{rad}(\{\beta_1,\ldots,\beta_L\},\Theta,\Vert\cdot\Vert_*)$. For $p>1$, the local convex relaxation is

\begin{small}
\[
\begin{alignedat}{2}
\inf\quad & t\\[1ex]
\mathrm{s.t.}\quad
  & \theta\in\Theta,\quad t\in\mathbb{R},\quad \lambda_l\in\mathbb{R}_+,\quad s_{il}\in\mathbb{R},\quad \mu_{ikl}\in\mathbb{R}^K_+,\quad \tau_{il}\in\mathbb{R}^d, && \forall\, i\in[N],\, k\in[K],\, l\in[L],\\
  & \zeta_{ikl}\in\mathbb{R}^{m_1}_+,\quad \eta_l\in\mathbb{R}^{m_2}_+,\quad \eta_{ikl}\in\mathbb{R}^{m_2}_+,
  && \forall\, i\in[N],\, k\in[K],\, l\in[L],\\[1ex]
  & t \geq \lambda_l\delta^p + \frac{1}{N}\sum_{i=1}^N s_{il}
  + \eta_l^\top(w-M\beta_l)
  + \rho\left\Vert\frac{1}{N}\sum_{i=1}^N\tau_{il}-M^\top\eta_l\right\Vert,
  && \forall\, l\in[L],\\[1ex]
  & \mu_{ikl}^\top \vec{1}_K = 1,
  && \forall\, i\in[N],\, k\in[K],\, l\in[L],\\[1ex]
  & s_{il} \geq (a_k+d_k\odot\theta)^\top\hat\xi_i+b_k^\top\theta+c_k
  -\mu_{ikl}^\top\Bigl((A+D\mathrm{diag}(\beta_l))\hat\xi_i+B\beta_l+c\Bigr)\\
  &\quad\quad +\zeta_{ikl}^\top(r-P\hat\xi_i)
  + \eta_{ikl}^\top(w-M\beta_l)\\
  &\quad\quad +\rho\left\Vert(B+D\mathrm{diag}(\hat\xi_i))^\top\mu_{ikl}+\tau_{il}+M^\top\eta_{ikl}\right\Vert\\
  &\quad\quad +\phi(q)\lambda_l
  \left\Vert
      \frac{a_k+d_k\odot\theta-(A+D\mathrm{diag}(\beta_l))^\top\mu_{ikl}-P^\top\zeta_{ikl}}{\lambda_l}
  \right\Vert_*^q,
  && \forall\, i\in[N],\, k\in[K],\, l\in[L].
\end{alignedat}
\]
\end{small}

where $q=p/(p-1)$ and $\phi(q)=(q-1)^{q-1}/q^q$.
For $p=1$, the last constraint is replaced, for all $i\in[N]$, $k\in[K]$, and $l\in[L]$, by
\begin{align*}
s_{il}&\geq (a_k+d_k\odot\theta)^\top\hat\xi_i+b_k^\top\theta+c_k
  -\mu_{ikl}^\top\Bigl((A+D\mathrm{diag}(\beta_l))\hat\xi_i+B\beta_l+c\Bigr)\\
  &\quad +\zeta_{ikl}^\top(r-P\hat\xi_i)
  + \eta_{ikl}^\top(w-M\beta_l)
  +\rho\left\Vert-\left((B+D\mathrm{diag}(\hat\xi_i))^\top\mu_{ikl}+\tau_{il}\right)-M^\top\eta_{ikl}\right\Vert,\\
\lambda_l&\geq
\left\Vert a_k+d_k\odot\theta-(A+D\mathrm{diag}(\beta_l))^\top\mu_{ikl}-P^\top\zeta_{ikl}\right\Vert_*.
\end{align*}
For a fixed $\theta$, let $R(\theta)$ denote the true ex-ante regret and let $\widehat R_\rho(\theta)$ 
denote the value of the displayed relaxation
after fixing $\theta$. Under the compactness assumptions of Theorem~\ref{thm:relax_gap_delta}, if
\[
    L_{\xi,\Theta}
    =
    \max_{k\in[K]}\sup_{\vartheta\in\Theta}
    \Vert a_k+d_k\odot\vartheta\Vert_*,
\]
then
\[
    -L_D\rho\delta
    \leq
    \widehat R_\rho(\theta)-R(\theta)
	    \leq
	    2L_{\xi,\Theta}\delta+L_D\rho\delta.
	\]
	The same two-sided bound holds for the corresponding optimal values after minimizing both sides over $\theta\in\Theta$.
\end{lemma}

It remains to choose the centers $\beta_l$. A natural heuristic would be to compute $\theta_{\mathrm{ERM}}$ and then solve
the regret maximization at this fixed decision to obtain adversarial benchmarks. This is unattractive for two reasons: it
requires solving the same NP-hard problem that the relaxation is meant to avoid, and a global solver typically returns only
one maximizer even when several active benchmarks exist, which can bias the relaxation toward a particular direction.
As a tractable alternative we instead compute
\[
    \mathbb P^{\mathrm{wc}}\in\operatorname*{argmax}_{\mathbb P\in B_\delta^p(\hat{\mathbb P}_N)}
    \mathbb E_{\mathbb P}[\ell(\theta_{\mathrm{ERM}},X)],
    \qquad
    \mathbb P^{\mathrm{bc}}\in\operatorname*{argmin}_{\mathbb P\in B_\delta^p(\hat{\mathbb P}_N)}
    \mathbb E_{\mathbb P}[\ell(\theta_{\mathrm{ERM}},X)]
\]
using \cite[Theorem~9]{kuhn_wasserstein_2024}, then set
\[
    \beta^{\mathrm{wc}}\in\operatorname*{argmin}_{\beta\in\Theta}\mathbb E_{\mathbb P^{\mathrm{wc}}}[\ell(\beta,X)],
    \qquad
    \beta^{\mathrm{bc}}\in\operatorname*{argmin}_{\beta\in\Theta}\mathbb E_{\mathbb P^{\mathrm{bc}}}[\ell(\beta,X)].
\]
We use $\beta_1=\theta_{\mathrm{ERM}}$, $\beta_2=\beta^{\mathrm{wc}}$, and $\beta_3=\beta^{\mathrm{bc}}$ as default
centers. The construction can be iterated by resolving the relaxation and recomputing these centers at the new candidate
decision.

\section{Experiments}\label{sec:experiments}

This section evaluates the exact algorithm developed in Section~\ref{sec:exact_algs} and its convex relaxation from 
Section~\ref{sec:convex_relaxations} on a variety of decision problems, comparing their performance to ERM and DRO 
policies. Whenever exact WDRRO computation is tractable, we use it as a ground-truth benchmark for DRRO,
its relaxation, ERM, and DRO\@. In settings where exact regret computation is prohibitive, we use the experiments to assess
the scalability and qualitative behavior of the relaxation rather than to compute ground-truth regret gaps.
All experiments were run on a MacBook Air (M2, 2023)
with an 8-core Apple~M2 chip and 16\,GB of RAM\@. Reported runtimes correspond to wall-clock times measured on this hardware.

We consider three classes of problems:
\begin{enumerate}
    \item univariate newsvendor problems, including a comparison with the relaxation in \cite{cho_wasserstein_2024},
    \item a two-item newsvendor system with dependent demands, and
    \item a mean--CVaR portfolio allocation problem.
\end{enumerate}

\subsection{Newsvendor Problem}\label{sec:experiments_newsvendor}

We begin with the classical univariate newsvendor model. In Section~\ref{sec:motivation} we compared the ERM, DRO, and DRRO order quantities 
for the case where the wholesale price is small relative to the retail price. We now revisit this problem to evaluate the convex relaxation 
in more detail and additionally study scenarios with different buy--sell ratios.

Figure~\ref{fig:comparison_newsvendor_extended} displays the optimal order quantities and exact regrets for ERM, DRO, DRRO, and the DRRO 
relaxation for the setting with selling price $s=2$, wholesale price $b=0.1$, Wasserstein order $p=2$, and $N=1000$ samples drawn from $N(100,10^2)$. 
The relaxation nearly perfectly tracks the DRRO optimal order quantity for radii up to $\delta\approx 6$. 
The regret gap is even smaller, illustrating the tightness of the relaxation.

\begin{figure}[h!]
    \centering
    \includegraphics[width=0.8\textwidth]{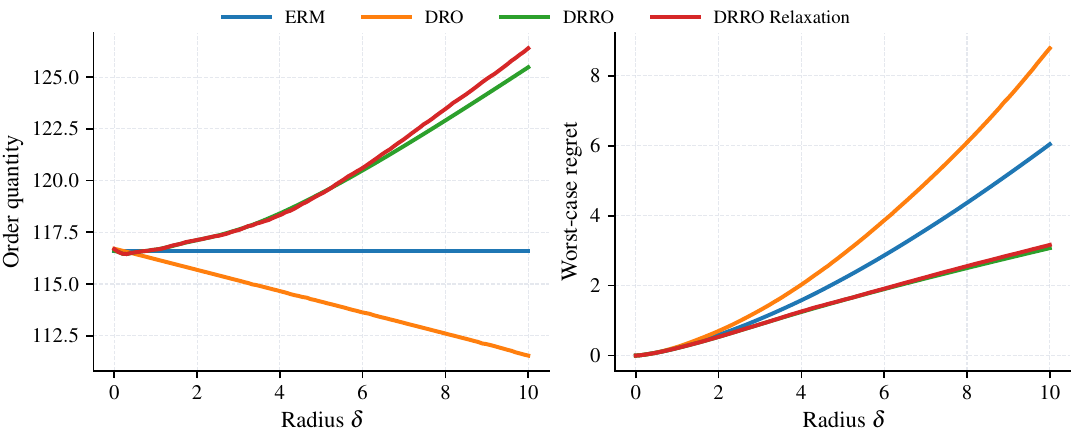}
    \caption{Comparison of ERM, DRO, DRRO, and DRRO relaxation for $b=0.1$, $s=2$, $N=1000$.}
    \label{fig:comparison_newsvendor_extended}
\end{figure}

Next we consider a case where the wholesale price is closer to the retail price, with $b=1.5$ and $s=2$. 
The order quantities and regrets are shown in Figure~\ref{fig:comparison_newsvendor_extended_other}. 
Once again, the relaxation tracks the true DRRO solution very closely across the full range of uncertainty radii shown, both in order quantity and regret.
A key difference between this scenario and the previous one is the qualitative behavior of the regret-optimal policy: 
here, the DRRO order quantity becomes \emph{more conservative} as the radius increases, in contrast to the earlier example 
where it became more aggressive. This is consistent with the underlying payoff asymmetry. When the wholesale price is negligible, 
overordering poses little downside risk while the upside from high demand is large. Thus aggressive ordering is beneficial. 
When the wholesale price is high relative to the retail price, overordering carries substantial downside risk, 
and conservatism becomes optimal. The threshold between these two behaviors occurs when $b=s/2$. 
Figure~\ref{fig:comparison_newsvendor_extended_other2} shows that at this price ratio ($b=1$, $s=2$), 
the DRRO policy is numerically indistinguishable from the ERM solution across all radii we tested. The relaxed DRRO policy 
overlaps with the exact DRRO solution as well, again illustrating the tightness of the relaxation in this experiment.

\begin{figure}[h!]
    \centering
    \includegraphics[width=0.8\textwidth]{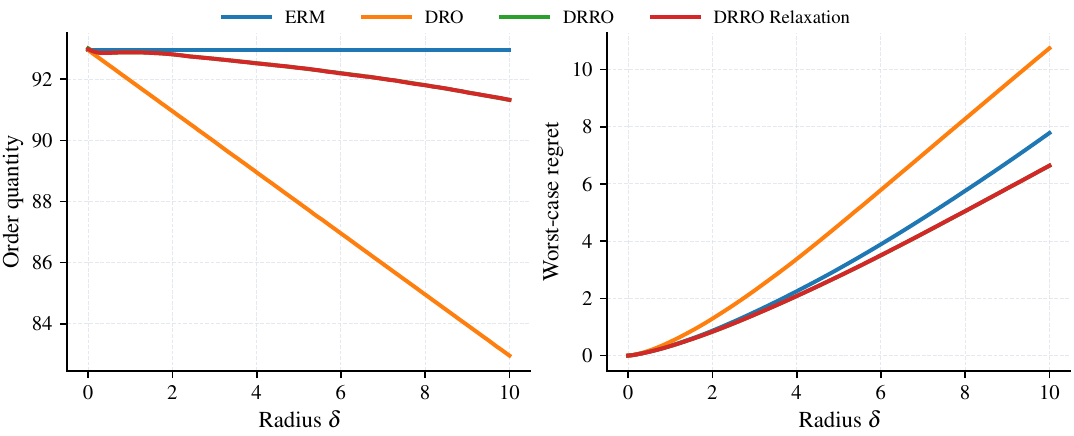}
    \caption{Comparison for $b=1.5$, $s=2$, $N=1000$. The DRRO and relaxed DRRO policies are numerically indistinguishable for all $\delta$.}
    \label{fig:comparison_newsvendor_extended_other}
\end{figure}

\begin{figure}[h!]
    \centering
    \includegraphics[width=0.8\textwidth]{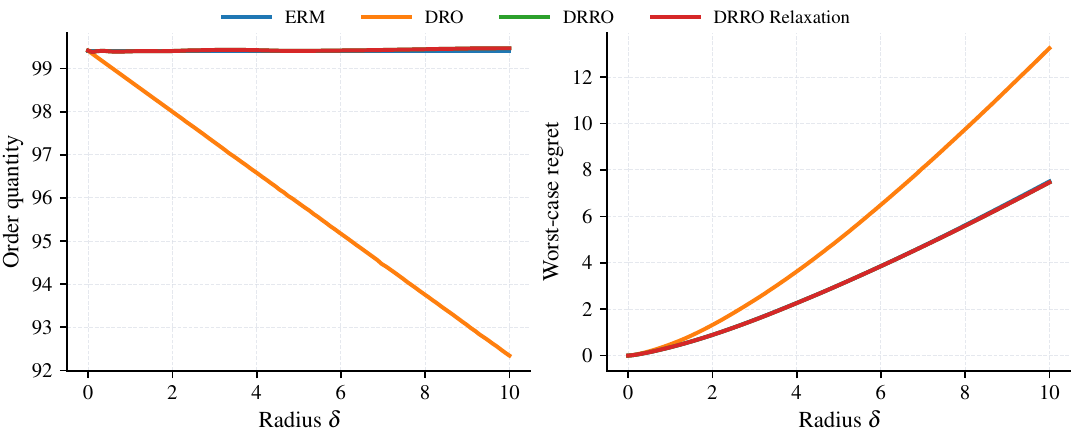}
    \caption{Behavior at the threshold ratio $b=s/2$. The DRRO solution is numerically indistinguishable from ERM for all $\delta$ shown.}
    \label{fig:comparison_newsvendor_extended_other2}
\end{figure}

\paragraph{Comparison with Cho and Yang.}
For this comparison, we switch to a type-$1$ Wasserstein distance because the relaxation in \cite{cho_wasserstein_2024} applies only when $p=1$. 
We consider the one-dimensional newsvendor setup with support and decision spaces $\Xi=\Theta=[0,100]$. We generate $N=300$ samples from 
$\mathbb{P}_0 = N^{-1}\sum_{i=1}^N \delta_{X_i}$, where $X_i\sim N(80,10^2)$ and values are clipped to remain in~$\Xi$. The wholesale and 
retail prices are $b=0.4$ and $s=2$, and we evaluate radii $\delta\in\{0,10,20,\ldots,100\}$. For each $\delta$ we repeat the experiment 
on $n_{\mathrm{sims}}=30$ independently drawn datasets and report averages.

Figure~\ref{fig:newsvendor_cho_comp1} compares the policies returned by the two relaxations. 
The left panel reports the true WDRRO regret of each policy, computed using the exact newsvendor algorithm. 
The right panel reports the model-specific upper bound returned by the relaxation that generated that policy. 
Figure~\ref{fig:gap_vs_delta_newsvendor_cho_comp} shows the corresponding gaps. 
Its left panel reports the difference between the true regret of each relaxation policy and the optimal WDRRO regret. 
Its right panel reports the difference between each model-specific upper bound and the true regret of the policy it certifies. 
As expected, both relaxations become 
exact for sufficiently large Wasserstein radii and also at $\delta=0$, in accordance with Theorem~\ref{thm:exact_relaxation}. 
Overall, the two relaxations deliver comparable regret performance. The relaxation in \cite{cho_wasserstein_2024} provides slightly 
tighter upper bounds, but neither method uniformly dominates the other in terms of the actual regret of the resulting policy.

The decisive distinction is computational. The relaxation in \cite{cho_wasserstein_2024} remains NP-hard and becomes rapidly 
impractical as $N$ increases, whereas our formulation yields a single convex optimization problem that scales efficiently to large sample sizes. 
In our experiments, both the exact regret-optimal policy and our relaxation require on the order of $0.1$ seconds to compute, while the 
relaxation in \cite{cho_wasserstein_2024} can take well over $90$ seconds for the same instances (see Figure~\ref{fig:relaxation_compute_time_cho_comp}).
This runtime separation can be made even larger by increasing $N$, since the NP-hard relaxation becomes increasingly difficult while our convex formulation remains directly scalable.
Since the exact regret-optimal policy for the one-dimensional newsvendor problem can be computed efficiently using the specialized algorithm
from Appendix~\ref{app:newsvendor_policy_algorithms}, this comparison should be viewed as a benchmark of relaxation quality rather than as a
recommended practical procedure for this particular model class.

\begin{figure}[h!]
    \centering
    \includegraphics[width=0.8\textwidth]{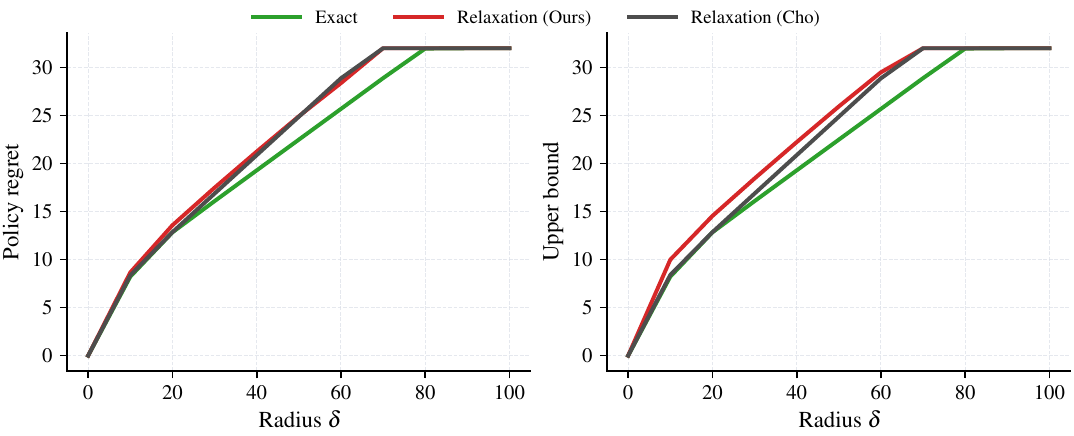}
    \caption{Regrets and corresponding upper bounds for our relaxation and the relaxation in \cite{cho_wasserstein_2024}.}
    \label{fig:newsvendor_cho_comp1}
\end{figure}

\begin{figure}[h!]
    \centering
    \includegraphics[width=0.8\textwidth]{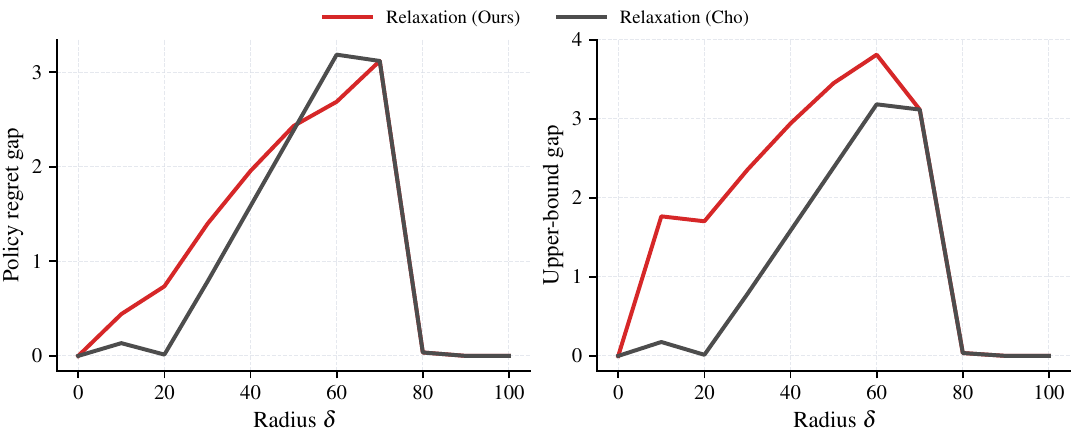}
    \caption{Gaps for the two relaxations: policy regret above the optimal WDRRO regret (left) and model-specific upper-bound gaps above true policy regret (right).}
    \label{fig:gap_vs_delta_newsvendor_cho_comp}
\end{figure}

\begin{figure}[h!]
    \centering
    \includegraphics[width=0.45\textwidth]{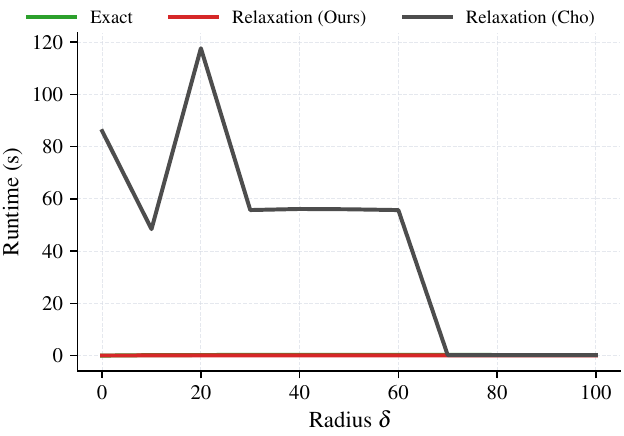}
    \caption{Computation time required to obtain the optimal policy under each relaxation.}
    \label{fig:relaxation_compute_time_cho_comp}
\end{figure}

\subsection{Two-Item Newsvendor Problem}

We now consider a setting with two products, $A$ and $B$, and a dependent demand effect: a fraction $\phi$ of customers purchasing $A$ will also purchase $B$, 
but only if $A$ is in stock. Let $\theta_A$ and $\theta_B$ denote the order quantities, $(b_A,b_B)$ the wholesale prices, and $(s_A,s_B)$ the retail prices. 
The negative profit can be written as a maximum of four affine functions in $(\theta_A,\theta_B)$, so the model fits Assumption~\ref{ass:model_choice} with $K=4$.
Here the bilinear coefficients satisfy $d_k=0$, so this example tests the general exact algorithm and the convex relaxation in the max-affine no-bilinear case.
Unlike the univariate newsvendor model, no specialized tractable algorithm based on the piecewise concavity of the regret objective is known for this two-dimensional setting.

We consider $D_A \sim N(32.5, 76.5625)$, $D_B \sim N(40,25)$, $b_A=b_B=6$, $s_A=20$, $s_B=7$, $\phi=0.1$, and $N=100$. We use type-$2$ Wasserstein ambiguity with an $\ell_2$ ground norm. The order quantities and demand support are restricted to $[0,100]^2$, and we evaluate radii $\delta\in\{0,1,\ldots,10\}$. The small sample size is chosen so 
that the exact DRRO problem can be solved via the cutting-plane algorithm, enabling a full comparison between the exact and relaxed models. Larger samples 
make the exact computation intractable, although the relaxed DRRO model remains easy to solve.

Figure~\ref{fig:two_item_newsvendor} displays the regret of all four methods and the optimal order quantities for products $A$ and $B$. DRRO and its relaxation significantly outperform DRO and ERM, and the 
regret of the relaxation is nearly indistinguishable from the true DRRO regret. A notable feature is that the optimal strategy for product $A$ becomes more aggressive as uncertainty increases, 
while the optimal strategy for product $B$ becomes aggressive initially but later turns conservative. This demonstrates that under regret minimization, 
different components of a vector decision variable may react differently to increasing uncertainty. The irregularities in the true DRRO $\theta_B$ are due 
to the flatness of the regret landscape in that direction; many different $\theta_B$ values achieve nearly identical regret. Nevertheless, 
the relaxation captures the overall trend extremely well.

\begin{figure}[h!]
    \centering
    \includegraphics[width=0.95\textwidth]{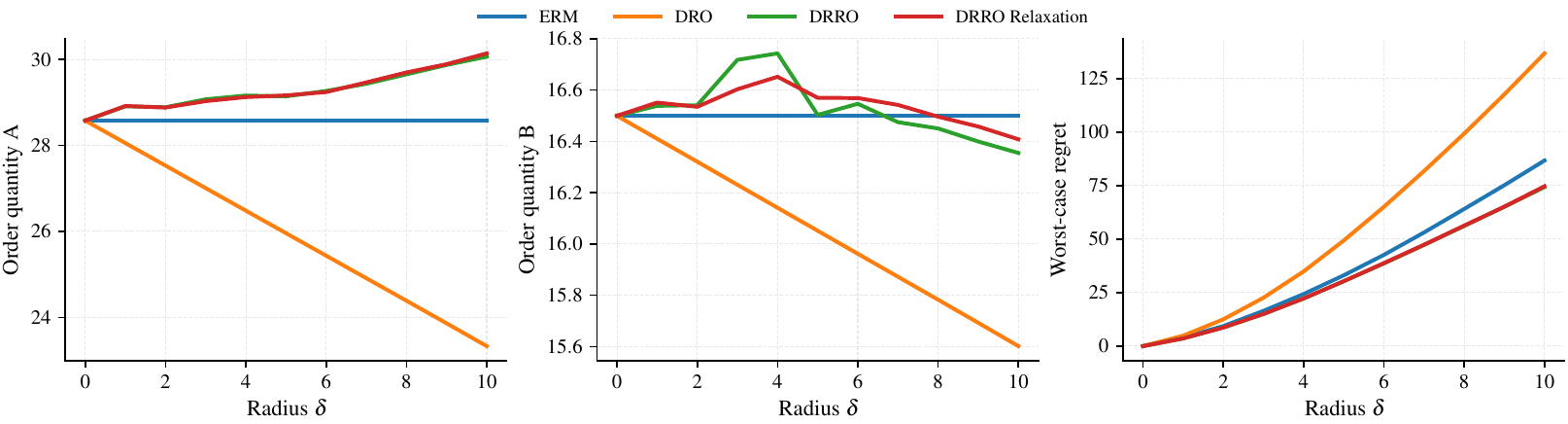}
    \caption{Regret and optimal order quantities for ERM, DRO, DRRO, and the DRRO relaxation in the two-item newsvendor model.}
    \label{fig:two_item_newsvendor}
\end{figure}

\subsection{Mean--Risk Portfolio Optimization}

Finally, we study a mean--CVaR portfolio allocation problem. Unlike the newsvendor experiments above, this setting exercises the general max-affine case with bilinear interaction terms $d_k\neq0$ and therefore tests the local-cover extension of the relaxation. Let $\xi\in\Xi=[-1,\infty)^d$ denote the random returns of $d$ risky 
assets, and let $r$ denote the deterministic return of a riskless asset. The lower bound excludes losses below $100\%$ on any risky
asset. We impose the no-short-selling and no-leverage condition by taking $\Theta=\{\theta\in\mathbb R_+^d:\vec{1}^{\top}\theta\leq 1\}$,
where $\theta_j$ is the portfolio weight invested in risky asset $j$. The remaining wealth $1-\vec{1}^{\top}\theta$ is invested
in the riskless asset, so the portfolio return is $R_\theta(\xi)=\theta^\top\xi+(1-\vec{1}^{\top}\theta)r$.
The corresponding loss is $L_\theta(\xi)=-R_\theta(\xi)$, and the classical mean--CVaR problem is
\begin{align}
    \min_{\theta\in\Theta}
    \mathbb{E}_{\mathbb{P}_0}[L_\theta(\xi)]
    +\rho\,\mathrm{CVaR}_{\alpha}^{\mathbb{P}_0}(L_\theta(\xi)).
    \label{eq:mean_cvar_original}
\end{align}
Using the standard variational representation of CVaR \citep{rockafellar2000optimization},
\[
\mathrm{CVaR}_{\alpha}^{\mathbb{P}}(L)
=
\inf_{\tau \in \mathbb{R}} \left\{
    \tau + \frac{1}{1-\alpha}\mathbb{E}_{\mathbb{P}}[\max\{L - \tau,0\}]
\right\},
\]
\eqref{eq:mean_cvar_original} can be written as
\begin{align}
    \min_{\theta\in\Theta}\inf_{\tau\in\mathbb{R}}
    \mathbb{E}_{\mathbb{P}_0}
    \left[
        L_\theta(\xi)
        +\rho\tau
        +\frac{\rho}{1-\alpha}\max\{L_\theta(\xi)-\tau,0\}
    \right].
    \label{eq:mean_cvar_with_tau}
\end{align}

For our WDRRO formulation, we pull the auxiliary CVaR variable $\tau$ out and treat $(\theta,\tau)$ as the decision variable. 
This yields a max-affine loss of the form required by Assumption~\ref{ass:model_choice},
\begin{align}
    \ell((\theta,\tau),\xi)
    =
    L_\theta(\xi)
    +\rho\tau
    +\frac{\rho}{1-\alpha}\max\{L_\theta(\xi)-\tau,0\}.
    \label{eq:mean_cvar_framework_loss}
\end{align}
To write this in the notation of Assumption~\ref{ass:model_choice}, set
$\vartheta=(\theta,\tau)\in\mathbb{R}^{d+1}$ and augment the uncertainty by
$\tilde\xi=(\xi,0)\in\mathbb{R}^{d+1}$. Then $\ell(\vartheta,\tilde\xi)$ is of max-affine form with the following coefficients
where $\lambda=\rho/(1-\alpha)$:
\[
\begin{array}{c|cccc}
    k & a_k & b_k & c_k & d_k \\ \hline
    1
    & 0
    & (r\vec{1},\,\rho)
    & -r
    & (-\vec{1},\,0) \\
    2
    & 0
    & ((1+\lambda)r\vec{1},\,\rho-\lambda)
    & -(1+\lambda)r
    & (-(1+\lambda)\vec{1},\,0).
\end{array}
\]
The framework-consistent DRRO problem is therefore
\begin{align}
    \inf_{\theta,\tau} \sup_{\mathbb{P}\in B_\delta^p(\mathbb{P}_0)}
    \left\{
        \mathbb{E}_{\mathbb{P}}[\ell((\theta,\tau),\xi)]
        -\inf_{\beta,\kappa}\mathbb{E}_{\mathbb{P}}[\ell((\beta,\kappa),\xi)]
    \right\}.
    \label{eq:actually_solved_cvar}
\end{align}
Conceptually, one could instead eliminate the auxiliary variable for each candidate policy before taking the worst case. We call this 
the nested-CVaR DRRO variant:
\begin{align}
    &\inf_{\theta} \sup_{\mathbb{P}\in B_\delta^p(\mathbb{P}_0)}
    \left\{
        \inf_{\tau}\mathbb{E}_{\mathbb{P}}[\ell((\theta,\tau),\xi)]
        -\inf_{\beta,\kappa}\mathbb{E}_{\mathbb{P}}[\ell((\beta,\kappa),\xi)]
    \right\} \notag \\
    &=
    \inf_{\theta} \sup_{\mathbb{P}\in B_\delta^p(\mathbb{P}_0)}
    \left\{
        \mathbb{E}_{\mathbb{P}}[L_\theta(\xi)]
        +\rho\,\mathrm{CVaR}_{\alpha}^{\mathbb{P}}(L_\theta(\xi))
        -\inf_{\beta}
        \left(
            \mathbb{E}_{\mathbb{P}}[L_\beta(\xi)]
            +\rho\,\mathrm{CVaR}_{\alpha}^{\mathbb{P}}(L_\beta(\xi))
        \right)
    \right\}.
    \label{eq:nested_cvar_drro}
\end{align}
In the WDRO setting this distinction is largely immaterial: under suitable integrability conditions one can invoke a minimax theorem 
to interchange the infimum over $\tau$ with the supremum over $\mathbb{P}$, so that the WDRO versions of 
\eqref{eq:actually_solved_cvar} and \eqref{eq:nested_cvar_drro} without the infimum over $(\beta,\kappa)$ coincide; see, e.g., 
\cite[Theorem~5.18]{kuhn2025distributionallyrobustoptimization}. For WDRRO, such an interchange is not justified in general, and 
the two regret problems need not agree. Nevertheless, \eqref{eq:actually_solved_cvar} is a conservative upper bound on 
\eqref{eq:nested_cvar_drro} by weak duality, and such conservative formulations are standard in robust optimization. We therefore 
use \eqref{eq:actually_solved_cvar} as the formulation consistent with our general WDRRO framework.

\subsubsection{One Risky Asset}

We first consider the case of a single risky asset, where the decision set reduces to $\Theta=[0,1]$ and $L_\theta(\xi)=-r+\theta(r-\xi)$.
By translation invariance and positive homogeneity of CVaR, for any distribution $\mathbb{P}$,
\begin{align}
    \mathbb{E}_{\mathbb{P}}[L_\theta(\xi)]
    +\rho\,\mathrm{CVaR}_{\alpha}^{\mathbb{P}}(L_\theta(\xi))
    =
    -(1+\rho)r
    +\theta\,c(\mathbb{P}),
    \label{eq:one_asset_mean_cvar_linear}
\end{align}
where
\[
    c(\mathbb{P})
    =
    \mathbb{E}_{\mathbb{P}}[r-\xi]
    +\rho\,\mathrm{CVaR}_{\alpha}^{\mathbb{P}}(r-\xi).
\]
Thus the ERM problem is linear in $\theta$: the investor is fully invested in the risky asset if 
$c(\hat{\mathbb{P}}_N)<0$, invests nothing in the risky asset if $c(\hat{\mathbb{P}}_N)>0$, and is indifferent if 
$c(\hat{\mathbb{P}}_N)=0$.

The same all-or-nothing structure holds for DRO\@. Indeed, the DRO objective is
\[
    -(1+\rho)r
    +\theta\,\overline c_\delta,
    \qquad
    \overline c_\delta
    =
    \sup_{\mathbb{P}\in B_\delta^p(\mathbb{P}_0)} c(\mathbb{P}).
\]
Hence the DRO policy is fully invested when $\overline c_\delta<0$ and invests nothing when $\overline c_\delta>0$. If investing is 
optimal at $\delta=0$ and $\overline c_\delta$ crosses zero as $\delta$ increases, the DRO policy therefore jumps instantly from 
$1$ to $0$.

The nested-CVaR DRRO variant admits a similarly explicit one-dimensional representation. Define $\underline c_\delta=\inf_{\mathbb{P}\in B_\delta^p(\mathbb{P}_0)} c(\mathbb{P})$
and $c_\delta=\sup_{\mathbb{P}\in B_\delta^p(\mathbb{P}_0)} c(\mathbb{P})$.
Using \eqref{eq:one_asset_mean_cvar_linear}, the regret under a fixed distribution is
\[
    \theta c(\mathbb{P})-\min_{\beta\in[0,1]}\beta c(\mathbb{P})
    =
    \max\{\theta c(\mathbb{P}),(\theta-1)c(\mathbb{P})\}.
\]
Therefore \eqref{eq:nested_cvar_drro} reduces to
\begin{align}
    \min_{\theta\in[0,1]}
    \max_{c\in[\underline c_\delta,\overline c_\delta]}
    \max\{\theta c,(\theta-1)c\}
    =
    \min_{\theta\in[0,1]}
    \max\left\{
        \theta\,\overline c_\delta,
        (1-\theta)(-\underline c_\delta)
    \right\}.
    \label{eq:nested_cvar_one_asset}
\end{align}
When $\underline c_\delta<0<\overline c_\delta$, the optimizer balances the two terms and is
\[
    \theta^{\mathrm{nested}}_\delta
    =
    \frac{-\underline c_\delta}{\overline c_\delta-\underline c_\delta}.
\]
If $\underline c_\delta\geq0$, the optimizer is $\theta^{\mathrm{nested}}_\delta=0$, while if $\overline c_\delta\leq0$, the optimizer is 
$\theta^{\mathrm{nested}}_\delta=1$, up to the usual nonuniqueness at equality. By contrast, the framework-consistent formulation 
\eqref{eq:actually_solved_cvar} keeps a single auxiliary variable $\tau$ outside the distributional supremum and does not admit this 
closed-form reduction.

In the one-asset experiment, we solve ERM, DRO, the framework-consistent DRRO problem \eqref{eq:actually_solved_cvar}, the convex 
relaxation of the same problem, and the nested-CVaR DRRO variant \eqref{eq:nested_cvar_drro}. We use $r=0$, $\rho=5$,
$\alpha=0.8$, type-$2$ Wasserstein ambiguity with an $\ell_2$ ground norm, and $N=10$ samples drawn from $N(0.04,0.03^2)$ and clipped to $[-1, \infty)$. The small sample size is chosen only so that we can
compute the ground-truth framework-consistent DRRO solution, which becomes costly for larger samples. ERM, DRO, the nested-CVaR
formula, and the convex DRRO relaxation remain efficiently computable at substantially larger sample sizes.

\begin{figure}[htbp]
    \centering
    \begin{subfigure}[t]{0.47\textwidth}
        \centering
        \raisebox{0.04\height}{\includegraphics[width=\linewidth]{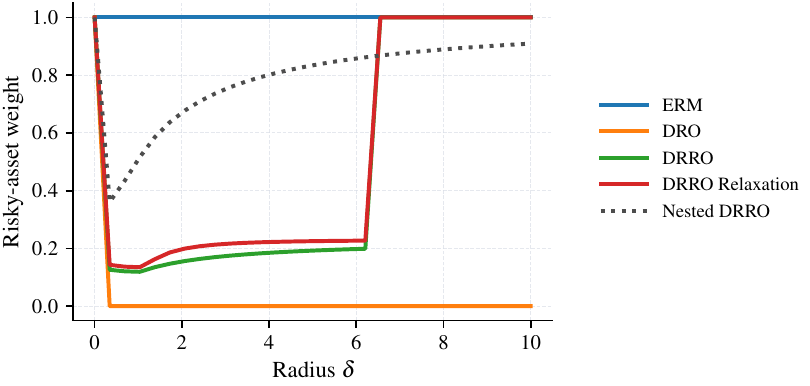}}
        \caption{Risky-asset weight as a function of the Wasserstein radius.}
        \label{fig:mean_cvar_1asset_thetas}
    \end{subfigure}
    \hfill
    \begin{subfigure}[t]{0.47\textwidth}
        \centering
        \includegraphics[width=\linewidth]{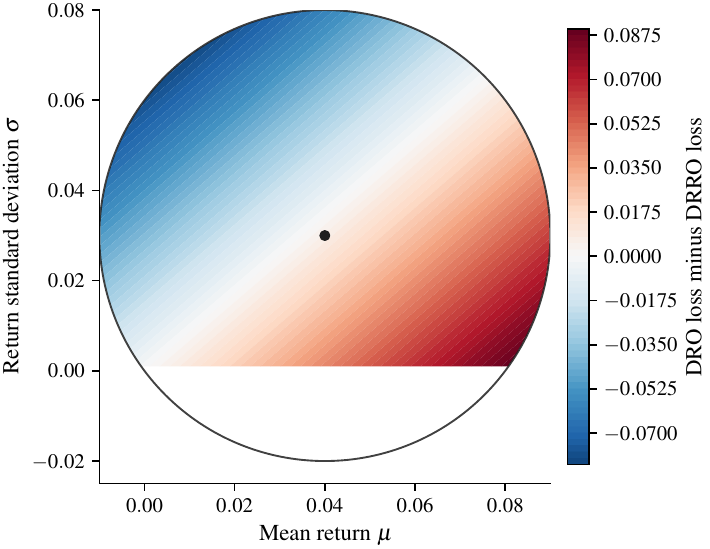}
        \caption{Gaussian mean--CVaR loss difference between DRO and DRRO at $\delta=0.05$.}
        \label{fig:mean_cvar_1asset_heatmap}
    \end{subfigure}
    \caption{One-asset mean--CVaR experiment. Positive heatmap values indicate distributions under which DRRO has lower loss than DRO.}
    \label{fig:mean_cvar_1asset}
\end{figure}

Figure~\ref{fig:mean_cvar_1asset_thetas} illustrates the qualitative difference between robust loss minimization and regret
minimization. ERM remains fully invested because the empirical mean--CVaR coefficient is favorable at the nominal distribution.
DRO is all-or-nothing: once the worst-case coefficient becomes nonnegative, it exits the risky asset completely. Both DRRO
formulations avoid this discontinuity. The framework-consistent DRRO policy first reduces exposure to limit regret against adverse
nearby distributions, but it remains invested and eventually returns to full investment. The DRRO relaxation tracks the exact framework-consistent DRRO policy closely enough to capture this nonmonotone behavior.
The nested-CVaR policy follows the scalar balancing formula \eqref{eq:nested_cvar_one_asset}; it moves more smoothly and
approaches full investment more gradually over the displayed range.

This large-radius behavior is not an artifact of the experiment. One can show that, as $\delta\to\infty$, both the
nested-CVaR and framework-consistent DRRO policies converge to full investment in the risky asset. This is natural
because the return support $\Xi=[-1,\infty)$ is asymmetric: downside losses are bounded by $100\%$, whereas upside
returns are unbounded. When the Wasserstein radius is large, the reference distribution carries little information,
and this support asymmetry becomes the dominant signal. Since the upside is unbounded, the regret-optimal large-radius
policy bets on the risky asset. The proof is a technical limiting argument, and we omit it to save space.

Figure~\ref{fig:mean_cvar_1asset_heatmap} evaluates the optimal DRO and DRRO policies at the smaller radius $\delta=0.05$ on a local Gaussian
slice around the data-generating distribution. The radius is small enough that the distributions in this ball place no significant mass
below the admissible return boundary $-1$, so evaluating the mean--CVaR objective under the Gaussian slice or its truncation at $-1$
is essentially identical. This radius corresponds to
moderate perturbations of the mean and standard deviation around $N(0.04,0.03^2)$, in contrast to the much larger radii in
Figure~\ref{fig:mean_cvar_1asset_thetas}, which are useful for displaying the full path of the policies over a wide range of
ambiguity levels. The plotted value is the mean--CVaR objective of the DRO policy minus that of the DRRO policy, so red regions
favor DRRO and blue regions favor DRO\@. For this instance at $\delta=0.05$, the computed DRO risky-asset weight is essentially
zero, while DRRO keeps a positive risky allocation. Consequently, DRRO performs better when the mean return is sufficiently high relative
to volatility, because it still captures upside that DRO has abandoned. When volatility is high or the mean return is low, the
riskless DRO decision can have lower mean--CVaR loss. The heatmap therefore shows the same phenomenon as in the motivating example from Section~\ref{sec:motivation}:
the regret-optimal policy improves performance on a substantial part of the ambiguity set
by avoiding the abrupt worst-case-loss-driven exit.

\subsubsection{Multiple Risky Assets}

Unlike in the single-asset case, the nested-CVaR DRRO policy for multiple assets no longer admits a closed-form scalar
representation, and computing the true framework-consistent DRRO policy is too costly at this sample size. We therefore evaluate
only the DRRO relaxation in this multi-asset experiment.

We use the return model from \cite{mohajerin_esfahani_data-driven_2018}. Our experiments are based on
a market with $d=10$ risky assets. In view of the capital asset pricing model, we assume that the return of asset $j$ decomposes
as $\xi_j=\psi+\zeta_j$, where the systematic risk factor $\psi\sim N(0,(2\%)^2)$ is common to all assets, while the
idiosyncratic risk factor $\zeta_j\sim N(3\%\,j,(2.5\%\,j)^2)$ is specific to asset $j$. The common factor and all idiosyncratic
factors are independent normal random variables. By construction, assets with higher indices have higher mean returns and higher
risk. We generate $N=300$ independent observations from this model, take $\alpha=0.8$, $\rho=10$, and riskless return $r=0$,
and evaluate radii up to $\delta_{\max}=10$. For this comparison we use
the type-$1$ Wasserstein distance with $\ell_1$ ground norm, matching the setting used in the original portfolio experiment.
For the DRRO relaxation, we use the iterative refinement described in Section~\ref{sec:convex_relaxations} with five
refinement steps.

\begin{figure}[htbp]
    \centering
    \includegraphics[width=0.8\textwidth]{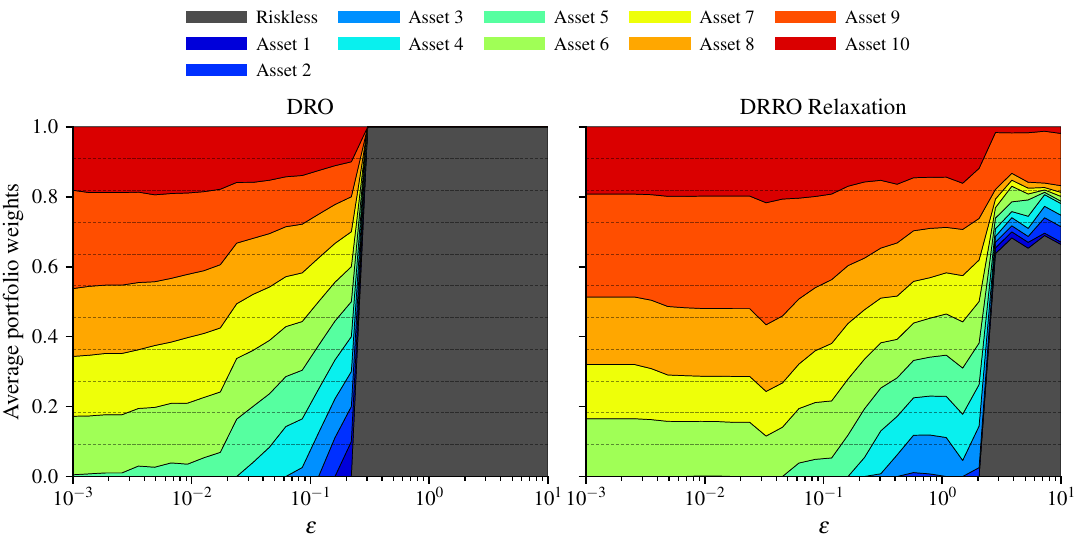}
    \caption{Portfolio weights for DRO and the DRRO relaxation with five refinement steps in the multi-asset mean--CVaR experiment with a riskless asset.}
    \label{fig:mean_cvar_multiasset}
\end{figure}

Figure~\ref{fig:mean_cvar_multiasset} shows the resulting portfolio weights. The DRO policy first reallocates across the risky
assets, but then jumps to the riskless asset as the radius grows. The DRRO relaxation is less conservative in this experiment and
continues to allocate positive weight to risky assets over the same range of radii.

For the exact regret formulations, one can again show that the large-radius limit is fully risky: both the nested-CVaR
and framework-consistent DRRO policies converge to the uniform portfolio over the risky assets, $\theta_j=1/d$, leaving
no wealth in the riskless asset. By contrast, DRO eventually invests fully in the riskless asset. The proof is a
technical limiting argument, which we omit. The intuition is the same as in the one-asset case: for very large
Wasserstein radii the reference distribution becomes largely uninformative, while the support still reveals that all
risky assets have bounded downside and unbounded upside. Symmetry across assets then leads regret minimization to spread
wealth uniformly over the risky assets. This asymptotic behavior is not yet visible in Figure~\ref{fig:mean_cvar_multiasset}
because the displayed radii are still moderate. We do not report much larger radii here because this multi-asset example
uses only the relaxation, whose approximation quality can deteriorate in the large-radius regime.

\section{Conclusion}\label{sec:conclusion}

This paper develops a theory of Wasserstein DRRO organized around a simple dichotomy. In regular
small-radius regimes, WDRRO is not a reason to abandon ERM: it selects among nonunique ERM optima,
has zero first-order sensitivity when the ERM optimizer is unique, and coincides exactly with ERM
for convex quadratic losses. Thus regret-based robustification is most useful outside this benign
regime, where nonsmoothness or non-infinitesimal radii can make WDRRO select genuinely different policies.

Those same regimes are computationally difficult. We showed that WDRRO regret evaluation is NP-hard
even for simple max-affine losses, while identifying tractable structure in the univariate newsvendor
problem through Wasserstein piecewise concavity. For the broader max-affine class, we developed exact
algorithms for ground-truth computation and convex relaxations with finite-radius guarantees,
including a local-cover extension for bilinear interaction terms. The experiments show that these
relaxations closely track exact WDRRO when comparison is possible and capture loss-dependent
aggressive or conservative behavior that worst-case-loss DRO misses.

\section*{Acknowledgments}
This material is based upon work supported by the Air Force Office of Scientific Research under award number FA9550-20-1-0397, and by the National Science Foundation under Grant Nos. 2118199 and 2229012.

Any opinions, findings, and conclusions or recommendations expressed in this material are those of the author(s) and do not necessarily reflect the views of the National Science Foundation.

\bibliographystyle{abbrvnat}
\bibliography{references}

\clearpage
\appendix
\section{DRRO for the Univariate Newsvendor Problem}\label{app:newsvendor}

In this section we prove the piecewise concavity of the function $h(\beta; \theta)$ and describe in detail the 
algorithm for computing the DRRO optimal order quantity in the univariate newsvendor problem. 

\subsection{\texorpdfstring{Reformulations of $h(\beta; \theta)$ as Finite Convex Programs}{Reformulations of h(beta; theta) as Finite Convex Programs}}\label{app:newsvendor_reformulations}

We start by describing a reformulation of $h(\beta; \theta)$ from \eqref{eq:g_def} as the solution of a 
finite-dimensional convex optimization problem. This representation corresponds to Theorem~9 in 
\cite{kuhn_wasserstein_2024}, which we also used in Section~\ref{sec:algorithms}. We begin with the case $\beta\leq \theta$ where
\begin{align*}
    \ell(\theta, x) - \ell(\beta, x) &=-(s - b)(\theta - \beta) + s \max(0, \min(\theta - \beta, \theta - x)).
\end{align*}
Since the random variable $x$ does not appear in the first summand, we can focus on just the second 
summand when computing the supremum over $\mathbb{P}\in B_\delta^p(\mathbb{P}_0)$. 
An application of \cite[Theorem~9]{kuhn_wasserstein_2024} to $\tilde\ell(x) = \max(0, \min(\theta - \beta, \theta - x))$ 
gives that $\sup_{\mathbb{P}\in B_\delta^p(\mathbb{P}_0)}\mathbb{E}_{\mathbb{P}}[\tilde\ell(x)]$ is equal to
\begin{align}
\begin{aligned}
\max \quad 
& \frac{1}{N}\sum_{i=1}^N \min\left(\gamma_i(\theta - \beta),\, \gamma_i(\theta - \hat{\xi}_i) - q_i\right) \\[1mm]
\mathrm{s.t.}\quad 
& \gamma_i \in [0, 1], \quad q_i \in \mathbb{R}, &\quad &\forall i \in [N], \\[1mm]
& \gamma_i\,\hat{\xi}_i + q_i \ge 0, &\quad &\forall i \in [N], \\[1mm]
& \frac{1}{N}\sum_{i=1}^N \gamma_i \left|\frac{q_i}{\gamma_i}\right|^p \le \delta^p.
\end{aligned}
\label{eq:left_maximization}
\end{align}

When $\theta \leq \beta$, we can write
\[
    \ell(\theta, x) - \ell(\beta, x)
    =
    b(\theta - \beta) + s \max(0, \min(x - \theta, \beta - \theta)).
\]
Again, it suffices to investigate $\tilde\ell(x) = \max(0, \min(x - \theta, \beta - \theta))$. 
Theorem~9 in \cite{kuhn_wasserstein_2024} again allows us to rewrite this as
\begin{align}
\begin{aligned}
\max \quad 
& \frac{1}{N}\sum_{i=1}^N  \min\left(\gamma_i(\hat{\xi}_i - \theta) + q_i,\, \gamma_i(\beta - \theta)\right) \\[1mm]
\mathrm{s.t.}\quad 
& \gamma_i \in [0, 1], \quad q_i \in \mathbb{R}, &\quad &\forall i \in [N], \\[1mm]
& \gamma_i\,\hat{\xi}_i + q_i \ge 0, &\quad &\forall i \in [N], \\[1mm]
& \frac{1}{N}\sum_{i=1}^N \gamma_i \left|\frac{q_i}{\gamma_i}\right|^p \le \delta^p.
\end{aligned}
\label{eq:right_maximization}
\end{align}

Both \eqref{eq:left_maximization} and \eqref{eq:right_maximization} have the same reduced form.  Define
\[
\begin{array}{c|ccc}
& a & d_i & r_i \\ \hline
\beta\leq\theta & \theta-\beta & (\hat\xi_i-\beta)_+ & -q_i \\
\theta\leq\beta & \beta-\theta & (\beta-\hat\xi_i)_+ & q_i.
\end{array}
\]
The support constraint becomes $r_i\leq\gamma_i\hat\xi_i$ in the left case and
$r_i\geq-\gamma_i\hat\xi_i$ in the right case.  In either case, we may restrict without loss to
$0\leq r_i\leq\gamma_i d_i$: negative $r_i$ only lowers the objective and uses budget, while values above
$\gamma_i d_i$ can be truncated to $\gamma_i d_i$ without changing the minimum and with weakly smaller budget
usage.  Since $\beta$ and the demand samples are nonnegative, this interval also implies the corresponding support
constraint.  Under this restriction, the minimum equals $\gamma_i(a-d_i)+r_i$.  Using the usual perspective
convention, so that $\gamma_i=0$ forces $r_i=0$, both programs reduce to
\begin{align}
\begin{aligned}
\max \quad 
& \frac{1}{N}\sum_{i=1}^{N} (\gamma_i(a-d_i) + r_i ) \\
\text{s.t.}\quad
& 0 \leq \gamma_i \leq 1, \qquad i=1,\dots,N, \\
& 0 \leq r_i \leq \gamma_i d_i, \qquad i=1,\dots,N, \\
& \frac{1}{N}\sum_{i=1}^N \frac{r_i^p}{\gamma_i^{p-1}} \le \delta^p.
\end{aligned}
\label{eq:canonical_newsvendor_maximization}
\end{align}

\subsection{\texorpdfstring{Algorithms for Evaluating $h(\beta;\theta)$ and Piecewise Concavity}{Algorithms for Evaluating h(beta; theta) and Piecewise Concavity}}
\label{app:newsvendor_wc_algorithms}

The goal of this section is to derive tractable algorithms for solving \eqref{eq:left_maximization} and
\eqref{eq:right_maximization}, equivalently \eqref{eq:canonical_newsvendor_maximization}, and then use the structure
of the constructed optimizers to prove the piecewise concavity of $h(\beta;\theta)$.  We focus on
$\beta\leq\theta$ as the case $\beta\geq\theta$ is symmetric.  Let $\beta_1\leq\beta_2\leq\theta$ and suppose that
we can construct optimal solutions $(\gamma^*(\beta_j),q^*(\beta_j))$, $j=1,2$, of \eqref{eq:left_maximization}
such that $\gamma_i^*(\beta_1)\leq\gamma_i^*(\beta_2)$ for all $i$.  For any convex weight $\lambda\in[0,1]$, the
convex combination $(\gamma_\lambda,q_\lambda)=\lambda(\gamma^*(\beta_1),q^*(\beta_1))
+(1-\lambda)(\gamma^*(\beta_2),q^*(\beta_2))$ is feasible because the feasible set of
\eqref{eq:left_maximization} is convex and independent of $\beta$.  Set
$\beta_\lambda=\lambda\beta_1+(1-\lambda)\beta_2$ and abbreviate
\[
    A_{ij}:=\gamma_i^*(\beta_j)(\theta-\beta_j),
    \qquad
    B_{ij}:=\gamma_i^*(\beta_j)(\theta-\hat\xi_i)-q_i^*(\beta_j).
\]
For the minimum in the $i$th summand of \eqref{eq:left_maximization}, the second argument at
$(\beta_\lambda,\gamma_\lambda,q_\lambda)$ is affine under convex combinations:
$\gamma_{i,\lambda}(\theta-\hat\xi_i)-q_{i,\lambda}
=\lambda B_{i1}+(1-\lambda)B_{i2}$.  For the first argument, monotonicity of the constructed optimizers gives
\[
    \gamma_{i,\lambda}(\theta-\beta_\lambda)
    -
    \bigl(\lambda A_{i1}+(1-\lambda)A_{i2}\bigr)
    =
    \lambda(1-\lambda)(\beta_2-\beta_1)
    \bigl(\gamma_i^*(\beta_2)-\gamma_i^*(\beta_1)\bigr)
    \geq 0.
\]
Since the minimum is nondecreasing in each argument and concave, the $i$th objective term at
$(\beta_\lambda,\gamma_\lambda,q_\lambda)$ is at least
$\lambda\min(A_{i1},B_{i1})+(1-\lambda)\min(A_{i2},B_{i2})$.  After summing over $i$ and dividing by $N$, the
objective value of this feasible point at $\beta_\lambda$ is therefore at least the convex combination of the two
optimal values. The optimal value at $\beta_\lambda$ is even larger, proving concavity on $[0,\theta]$.
We now treat the cases $p=1$ and $p>1$ separately, construct optimizers, and verify the required monotonicity of
the optimal $\gamma_i$'s.

\subsubsection{\texorpdfstring{The case $p=1$}{The case p=1}}

For $p=1$, the budget constraint in \eqref{eq:canonical_newsvendor_maximization} becomes
$\sum_{i=1}^N r_i\leq N\delta$ and is independent of $\gamma$.  Fixing $r_i$, the optimal choice of
$\gamma_i$ is immediate.  If $a\geq d_i$, then the coefficient of $\gamma_i$ in the objective is
nonnegative, and we choose $\gamma_i=1$.  If $a<d_i$, then the coefficient is negative, and we choose the
smallest feasible value, $\gamma_i=r_i/d_i$.  Thus, after eliminating $\gamma$, the problem becomes
\begin{align}
\begin{aligned}
\max \quad
& \frac{1}{N}\left[
    \sum_{i:a\geq d_i}(a-d_i)
    + \sum_{i=1}^N w_i r_i
\right] \\
\text{s.t.}\quad
& 0\leq r_i\leq d_i,\qquad i=1,\dots,N,\\
& \sum_{i=1}^N r_i\leq N\delta,
\end{aligned}
\label{eq:p1_newsvendor_knapsack}
\end{align}
where
\[
    w_i=
    \begin{cases}
        1, & a\geq d_i,\\
        a/d_i, & a<d_i.
    \end{cases}
\]
For the left problem, $a=\theta-\beta$ and $d_i=(\hat\xi_i-\beta)_+$, so $a\geq d_i$ is equivalent to
$\hat\xi_i\leq\theta$.  If the samples are sorted increasingly, then for all points with
$\hat\xi_i>\beta$ the coefficients $w_i$ are nonincreasing in $i$: they are equal to one for
$\beta<\hat\xi_i\leq\theta$ and equal to $(\theta-\beta)/(\hat\xi_i-\beta)$ for
$\hat\xi_i>\theta$.  Hence \eqref{eq:p1_newsvendor_knapsack} is a fractional knapsack problem whose optimal
solution fills the capacities $d_i$ from left to right among the points with $\hat\xi_i>\beta$, using a fractional
amount only for the first point at which the remaining budget is insufficient.

Equivalently, with $B=N\delta$ and $P_i(\beta)=\sum_{\ell<i}d_\ell(\beta)$, an optimal solution is
\[
    r_i^*(\beta)=\min\{d_i(\beta),\, (B-P_i(\beta))_+\}.
\]
The corresponding optimizer of \eqref{eq:canonical_newsvendor_maximization} is obtained by taking
\[
    \gamma_i^*(\beta)=
    \begin{cases}
        1, & d_i(\beta)\leq a(\beta),\\
        r_i^*(\beta)/d_i(\beta), & d_i(\beta)>a(\beta).
    \end{cases}
\]
The greedy rule is optimal by the standard exchange argument: if $i<j$, $w_i\geq w_j$, $r_i<d_i$, and
$r_j>0$, shifting a small amount of budget from $j$ to $i$ weakly increases the objective.  Finally, this
construction has the required monotonicity in $\beta$.  For $\hat\xi_i\leq\theta$ we have
$\gamma_i^*(\beta)=1$.  For $\hat\xi_i>\theta$, $P_i(\beta)$ is nonincreasing and $d_i(\beta)$ is decreasing in
$\beta$, and therefore
\[
    \gamma_i^*(\beta)
    =
    \min\left\{1,\frac{(B-P_i(\beta))_+}{d_i(\beta)}\right\}
\]
is nondecreasing in $\beta$.  Hence $\gamma_i^*(\beta_1)\leq\gamma_i^*(\beta_2)$ for all
$\beta_1\leq\beta_2\leq\theta$.

The resulting evaluator for the full function $h(\beta;\theta)$ and an optimal solution
$(\gamma^*,q^*)$ is summarized in
Algorithm~\ref{alg:newsvendor_p1}. The first step records which side of $\theta$ the candidate $\beta$
lies on, because the canonical maximization only computes the max-min term. The constant part of
$\ell(\theta,x)-\ell(\beta,x)$ is added back in the final line. Since $w_i$ is nonincreasing in $d_i$,
the greedy step fills the coordinates in increasing order of $d_i$. With the samples sorted increasingly, this
is the sample order when $\beta\leq\theta$ and the reverse sample order when $\beta>\theta$.

\begin{algorithm}[!ht]
\caption{Evaluation of $h(\beta;\theta)$ and an optimizer for $p=1$}
\label{alg:newsvendor_p1}
\begin{algorithmic}[1]
\Require Sorted samples $\hat\xi_1\leq\cdots\leq\hat\xi_N$, radius $\delta$, prices $b,s$, decisions $\beta,\theta$
\If{$\beta\leq\theta$}
    \State $a\gets \theta-\beta$, $c_0\gets -(s-b)a$, $d_i\gets(\hat\xi_i-\beta)_+$ for $i=1,\ldots,N$
    \State $\eta\gets -1$
    \State $\sigma(j)\gets j$ for $j=1,\ldots,N$
\Else
    \State $a\gets \beta-\theta$, $c_0\gets -ba$, $d_i\gets(\beta-\hat\xi_i)_+$ for $i=1,\ldots,N$
    \State $\eta\gets 1$
    \State $\sigma(j)\gets N+1-j$ for $j=1,\ldots,N$
\EndIf
\State $B\gets N\delta$
\State $R\gets B$
\State $q_i^*\gets0$, and $\gamma_i^*\gets\mathbf{1}_{\{d_i\leq a\}}$ for $i=1,\ldots,N$
\State $V\gets \sum_{i:d_i\leq a}(a-d_i)$
\For{$j=1,\ldots,N$}
    \State $i\gets\sigma(j)$
    \If{$R=0$}
        \State \textbf{break}
    \EndIf
    \If{$d_i\leq a$}
        \State $w_i\gets 1$
    \Else
        \State $w_i\gets a/d_i$
    \EndIf
    \State $r_i^*\gets \min\{d_i,R\}$
    \State $q_i^*\gets \eta r_i^*$
    \If{$d_i>a$}
        \State $\gamma_i^*\gets r_i^*/d_i$
    \EndIf
    \State $V\gets V+w_i r_i^*$
    \State $R\gets R-r_i^*$
\EndFor
\State \Return $\left(c_0+sV/N,\gamma^*,q^*\right)$
\end{algorithmic}
\end{algorithm}

\subsubsection{\texorpdfstring{The case $p>1$}{The case p>1}}

For $p>1$, we derive a water-filling solution. Put $B=N\delta^p$ and multiply the objective in
\eqref{eq:canonical_newsvendor_maximization} by $N$.  Write $r_i=\gamma_i x_i$, with
$x_i\in[0,d_i]$. If $\gamma_i=0$, then $r_i=0$ and $x_i$ is immaterial.  The problem becomes
\[
    \max_{(\gamma,x)\in C}\sum_{i=1}^N\gamma_i(a-d_i+x_i)
    \quad\text{s.t.}\quad
    \sum_{i=1}^N\gamma_i x_i^p\leq B,
\]
where $C=\{(\gamma,x):0\leq\gamma_i\leq 1,\ 0\leq x_i\leq d_i\}$ and the displayed objective is denoted by $f_a(\gamma,x)$.  This change of variables makes the
program nonconvex in $(\gamma,x)$, but this is harmless because we only use the following global Lagrangian
certificate.  For $\lambda\geq0$, let
\[
    L_a(\gamma,x,\lambda)
    =
    \sum_{i=1}^N\gamma_i(a-d_i+x_i)
    +\lambda\left(B-\sum_{i=1}^N\gamma_i x_i^p\right).
\]
If $(\gamma^*,x^*)\in C$ is feasible, $\lambda^*\geq0$,
$\lambda^*(B-\sum_i\gamma_i^*(x_i^*)^p)=0$, and
$(\gamma^*,x^*)\in\arg\max_{(\gamma,x)\in C}L_a(\gamma,x,\lambda^*)$, then $(\gamma^*,x^*)$ is globally
optimal, since every feasible $(\gamma,x)$ satisfies
$f_a(\gamma,x)\leq L_a(\gamma,x,\lambda^*)\leq L_a(\gamma^*,x^*,\lambda^*)=f_a(\gamma^*,x^*)$.

For fixed $\lambda$ and $\gamma_i$, the $i$th term of the Lagrangian requires maximizing
$x_i-\lambda x_i^p$ over $x_i\in[0,d_i]$. The maximizer is $x_i(\lambda)=\min\{d_i,t(\lambda)\}$ where $t(\lambda)=(p\lambda)^{-1/(p-1)}$ for $\lambda>0$
and $t(0)=\infty$, and the maximum value is
\[
    \phi_i(\lambda)=
    \begin{cases}
        d_i-\lambda d_i^p, & 0\leq\lambda\leq \dfrac{1}{p d_i^{p-1}},\\[1.1em]
        \dfrac{p-1}{p}(p\lambda)^{-1/(p-1)}, & \lambda>\dfrac{1}{p d_i^{p-1}}.
    \end{cases}
\]
This is the water-filling step. The multiplier $\lambda$ determines a common level $t(\lambda)$, and each
coordinate is filled up to the smaller of this level and its cap $d_i$.
After eliminating $x_i$, the coefficient of $\gamma_i$ is $m_i(\lambda,a)=a-d_i+\phi_i(\lambda)$.
The remaining maximization is therefore
$\max_{0\leq\gamma_i\leq1}\gamma_i m_i(\lambda,a)$, so $\gamma_i=1$ if $m_i>0$, $\gamma_i=0$ if
$m_i<0$, while the case $m_i=0$ is not determined by this maximization alone and will be fixed below.

For $d_i>0$, define $\Lambda_i(a)=\sup(\{\lambda\geq0:m_i(\lambda,a)>0\}\cup\{0\})$. If
$d_i=0$, set $\Lambda_i(a)=\infty$, since this coordinate does not affect the budget. Then
\begin{equation}
\label{eq:newsvendor_lambda_threshold}
    \Lambda_i(a)=
    \begin{cases}
        \infty, & a\geq d_i,\\[0.7em]
        \dfrac{a}{d_i^p}, & 0\leq a\leq \dfrac{d_i}{p},\\[1.1em]
        \dfrac{(p-1)^{p-1}}{p^p(d_i-a)^{p-1}}, & \dfrac{d_i}{p}<a<d_i.
    \end{cases}
\end{equation}
Indeed, let $\bar\lambda_i=1/(p d_i^{p-1})$. For $\lambda\leq\bar\lambda_i$, we have
$m_i(\lambda,a)=a-\lambda d_i^p$. If $a\leq d_i/p$, this term crosses zero at
$a/d_i^p\leq\bar\lambda_i$. In the same case, the second-branch expression is nonpositive at
$\bar\lambda_i$ and decreases for $\lambda>\bar\lambda_i$, so there is no second crossing. If
$d_i/p<a<d_i$, the first-branch expression remains positive at $\bar\lambda_i$, and the unique crossing
occurs in the second branch, where $x_i(\lambda)<d_i$. Solving
$a-d_i+\frac{p-1}{p}(p\lambda)^{-1/(p-1)}=0$ gives the third line. Finally, if
$a\geq d_i$, no finite crossing occurs. For fixed $a$, these thresholds are nonincreasing in $d_i$, so we sort
indices so that $\Lambda_1(a)\geq\cdots\geq\Lambda_N(a)$, breaking ties deterministically.

We first treat the case $a>0$. If $B\geq\sum_i d_i^p$, then $\gamma_i^*=1$, $x_i^*=d_i$, and $r_i^*=d_i$ is optimal: each summand is at most
$a$, and this choice attains that upper bound.  Suppose next that $B<\sum_i d_i^p$.  Then the budget
constraint binds.  If it were slack and some coordinate with positive $\gamma_i$ had $x_i<d_i$, increasing
$x_i$ slightly would improve the objective.  Otherwise, all positive-mass coordinates have $x_i=d_i$. Since
$B<\sum_i d_i^p$, not all coordinates with $d_i>0$ can have $\gamma_i=1$, so choose one with
$d_i>0$ and $\gamma_i<1$. If $\gamma_i=0$, set $x_i=d_i$ first, which changes neither the objective nor
the budget. Since the budget is slack, increasing this $\gamma_i$ by a sufficiently small amount remains
feasible and increases the objective by $a>0$ per unit.

For the non-full-budget case with $a>0$, choose $\lambda^*$ such that
\[
    D_>(\lambda^*;a)
    \leq B\leq
    D_{\geq}(\lambda^*;a),
    \qquad
    D_>(\lambda;a)=\sum_{\Lambda_i(a)>\lambda}x_i(\lambda)^p,
    \quad
    D_{\geq}(\lambda;a)=\sum_{\Lambda_i(a)\geq\lambda}x_i(\lambda)^p.
\]
The functions $D_>$ and $D_{\geq}$ are nonincreasing in $\lambda$, so $\lambda^*$ can be found by bisection.
Set $x_i^*=x_i(\lambda^*)$ and, in the sorted order,
\[
    \gamma_i^*=
    \begin{cases}
        1, & x_i^*=0,\\[0.6em]
        \min\left\{1,\dfrac{\left(B-\sum_{k<i}(x_k^*)^p\right)_+}{(x_i^*)^p}\right\}, & x_i^*>0,
    \end{cases}
\]
Finally set $r_i^*=\gamma_i^*x_i^*$. For the left problem this corresponds to $q_i^*=-r_i^*$, while for
the right problem it corresponds to $q_i^*=r_i^*$. If the finite thresholds are distinct, at most one $\gamma_i^*$ is
fractional.  If several thresholds equal $\lambda^*$, the deterministic ordering simply fills those tied indices
one at a time. This is optimal because all tied boundary indices have $m_i(\lambda^*,a)=0$.

The case $a=0$ is simpler. In that case every summand satisfies
$\gamma_i(x_i-d_i)\leq0$, so the optimal value is zero. One optimal solution is obtained by taking
$\gamma_i^*=0$ and $q_i^*=0$ for all $i$.

Algorithm~\ref{alg:newsvendor_pgt1} gives the corresponding evaluator for the full function
$h(\beta;\theta)$ and an optimal solution $(\gamma^*,q^*)$ when $p>1$. We again assume that the
samples are sorted increasingly. Since the thresholds $\Lambda_i(a)$ are nonincreasing in $d_i$, the
required threshold order is the sample order when $\beta\leq\theta$ and the reverse sample order when
$\beta>\theta$.

\begin{algorithm}[H]
\caption{Evaluation of $h(\beta;\theta)$ and an optimizer for $p>1$}
\label{alg:newsvendor_pgt1}
\begin{algorithmic}[1]
\Require Sorted samples $\hat\xi_1\leq\cdots\leq\hat\xi_N$, radius $\delta$, order $p>1$, prices $b,s$, decisions $\beta,\theta$
\If{$\beta\leq\theta$}
    \State $a\gets \theta-\beta$, $c_0\gets -(s-b)a$, $d_i\gets(\hat\xi_i-\beta)_+$ for all $i$
    \State $\eta\gets -1$
    \State $\sigma(j)\gets j$ for $j=1,\ldots,N$
\Else
    \State $a\gets \beta-\theta$, $c_0\gets -ba$, $d_i\gets(\beta-\hat\xi_i)_+$ for all $i$
    \State $\eta\gets 1$
    \State $\sigma(j)\gets N+1-j$ for $j=1,\ldots,N$
\EndIf
\If{$a=0$}
    \State $\gamma_i^*\gets0$, $q_i^*\gets0$ for $i=1,\ldots,N$
    \State \Return $\left(0,\gamma^*,q^*\right)$
\EndIf
\State $B\gets N\delta^p$
\If{$B=0$}
    \State $\gamma_i^*\gets\mathbf{1}_{\{d_i\leq a\}}$, $q_i^*\gets0$ for $i=1,\ldots,N$
    \State \Return $\left(c_0+\dfrac{s}{N}\sum_{i=1}^N(a-d_i)_+,\gamma^*,q^*\right)$
\EndIf
\If{$B\geq \sum_{i=1}^N d_i^p$}
    \State $\gamma_i^*\gets1$, $q_i^*\gets\eta d_i$ for $i=1,\ldots,N$
    \State \Return $\left(c_0+sa,\gamma^*,q^*\right)$
\EndIf
\State Compute $\Lambda_i(a)$ for $i=1,\ldots,N$ using \eqref{eq:newsvendor_lambda_threshold}
\State For $\lambda>0$, write $x_i(\lambda)=\min\{d_i,(p\lambda)^{-1/(p-1)}\}$
\State Find $\lambda^*>0$ by bisection such that $D_>(\lambda^*)\leq B\leq D_{\geq}(\lambda^*)$, where $D_>(\lambda)=\sum_{\Lambda_i>\lambda}x_i(\lambda)^p$ and $D_{\geq}(\lambda)=\sum_{\Lambda_i\geq\lambda}x_i(\lambda)^p$
\State $R\gets B$
\State $V\gets0$
\State $\gamma_i^*\gets0$, $r_i^*\gets0$, $q_i^*\gets0$, $x_i^*\gets x_i(\lambda^*)$ for $i=1,\ldots,N$
\For{$j=1,\ldots,N$}
    \State $i\gets\sigma(j)$
    \If{$R\leq0$}
        \State \textbf{break}
    \EndIf
    \If{$x_i^*=0$}
        \State $\gamma_i^*\gets 1$
    \Else
        \State $\gamma_i^*\gets\min\{1,(R)_+/(x_i^*)^p\}$
    \EndIf
    \State $r_i^*\gets\gamma_i^*x_i^*$
    \State $q_i^*\gets\eta r_i^*$
    \State $V\gets V+\gamma_i^*(a-d_i+x_i^*)$
    \State $R\gets R-\gamma_i^*(x_i^*)^p$
\EndFor
\State \Return $\left(c_0+sV/N,\gamma^*,q^*\right)$
\end{algorithmic}
\end{algorithm}

We now return to the interval $\beta\in[0,\theta]$ and check that this construction has the
monotonicity used in the concavity argument. As $\beta$ increases toward $\theta$, each
$\gamma_i^*(\beta)$ can be chosen nondecreasing. Sort the samples increasingly and keep
this order for all $\beta$.  If $\hat\xi_i\leq\theta$, then $d_i(\beta)\leq a(\beta)$, so
$\Lambda_i(a(\beta))=\infty$ and the construction gives $\gamma_i^*(\beta)=1$.  If $\hat\xi_i>\theta$, write
$c_i=\hat\xi_i-\theta>0$ and set $a=\theta-\beta$. Then $d_i=a+c_i$. The breakpoint
$a=d_i/p$ from the general formula becomes $a=(a+c_i)/p$, or equivalently
$a=c_i/(p-1)$. Since $a<d_i$ automatically in this case, substituting $d_i=a+c_i$ and
$d_i-a=c_i$ gives
\[
    \Lambda_i(a)=
    \begin{cases}
        \dfrac{a}{(a+c_i)^p}, & 0\leq a\leq \dfrac{c_i}{p-1},\\[1.1em]
        \dfrac{(p-1)^{p-1}}{p^p c_i^{p-1}}, & a>\dfrac{c_i}{p-1}.
    \end{cases}
\]
Thus the finite thresholds are ordered by the sample order.  We also need that, for every
finite-threshold index $i$ and every $k\leq i$, the quantity $x_k(\beta,\Lambda_i(\beta))$ is
nonincreasing in $\beta$. As $\beta$ increases, $a=\theta-\beta$ decreases, and so does
$d_k(\beta)=a+c_k$. If $a\leq c_i/(p-1)$, then $t(\Lambda_i)\geq d_i$. Since the samples are sorted,
$d_k\leq d_i$ for $k\leq i$, and therefore $x_k(\beta,\Lambda_i(\beta))=d_k(\beta)$, which is
nonincreasing in $\beta$. If $a>c_i/(p-1)$, then $t(\Lambda_i)=p c_i/(p-1)$ is constant in $\beta$.
In this case $x_k(\beta,\Lambda_i(\beta))=\min\{d_k(\beta),p c_i/(p-1)\}$,
which is again nonincreasing in $\beta$.

For such an index define
$H_i(\beta)=\sum_{k<i}x_k(\beta,\Lambda_i(\beta))^p$ and
$Q_i(\beta)=x_i(\beta,\Lambda_i(\beta))^p$.  The previous paragraph shows that both are nonincreasing in
$\beta$, and the deterministic fill rule is equivalently
$\gamma_i^*(\beta)=\min\{1,(B-H_i(\beta))_+/Q_i(\beta)\}$.  Therefore, as $\beta$ increases, the
numerator is nondecreasing and the denominator is nonincreasing.  If the numerator is initially nonpositive, the
value is initially zero. Once it is positive, the raw ratio is nondecreasing.  Hence
$\gamma_i^*(\beta)$ is nondecreasing.  The full-budget case is also monotone because $\sum_i d_i(\beta)^p$
decreases with $\beta$.  Hence
$\gamma_i^*(\beta_1)\leq\gamma_i^*(\beta_2)$ for all $\beta_1\leq\beta_2\leq\theta$, completing the construction
needed for concavity on $[0,\theta]$.

The optimizers returned by Algorithms~\ref{alg:newsvendor_p1} and~\ref{alg:newsvendor_pgt1} also identify a
worst-case distribution attaining $h(\beta;\theta)$. Indeed, if $(\gamma^*,q^*)$ solves the corresponding
finite reformulation, then \cite{kuhn_wasserstein_2024} shows that one worst-case distribution is
\begin{align}
\label{eq:wc_dist_newsvendor}
    \mathbb{Q}^*
    =
    \sum_{i:\gamma_i^*>0}\frac{\gamma_i^*}{N}
    \delta_{\hat\xi_i+q_i^*/\gamma_i^*}
    +
    \sum_{i:\gamma_i^*<1}\frac{1-\gamma_i^*}{N}
    \delta_{\hat\xi_i}.
\end{align}
Thus each observation is either moved, left fixed, or split into one moved and one unmoved part. By
\cite[Corollary 2]{gao_distributionally_2022}, at most one observation needs to be split in an optimal
transport plan. This is consistent with the deterministic fill rules above, which fill the coordinates one at a
time and leave at most one fractional $\gamma_i^*$.

\subsection{Computing the DRRO Policy}
\label{app:newsvendor_policy_algorithms}

The preceding subsection provides an evaluation oracle for $h(\beta;\theta)$ and an optimizer of the
corresponding finite transport problem. We use this oracle in a nested one-dimensional scheme to compute
the DRRO policy. For each fixed $\theta$, Theorem~\ref{thm:piecewise_concavity} gives the decomposition
\eqref{eq:newsvendor_regret_split}. Hence the evaluation of $R(\theta)$ reduces to two scalar concave
maximization problems, one on $[0,\theta]$ and one on $[\theta,\infty)$. The oracle is Algorithm~\ref{alg:newsvendor_p1}
when $p=1$ and Algorithm~\ref{alg:newsvendor_pgt1} when $p>1$.

In our implementation, the two inner maximization problems are solved by golden-section search. On the
right interval $[\theta,\infty)$, we first bracket a maximizer and then apply the same scalar search on the
resulting finite interval. Equivalently, this minimizes the scalar convex function $-h(\cdot;\theta)$ on
each of the two pieces. A bisection method based on a supergradient of $h$ with respect to $\beta$ would
also be natural, but a simple closed-form supergradient is not immediate at nonunique optimizers.

The outer minimization uses that $R(\theta)$ is convex in $\theta$. Once the active inner maximizer has
been found, the optimizer returned by the evaluation oracle gives a subgradient through the Danskin formula
\eqref{eq:subgradient}. The sign of this subgradient determines which half of the current interval can
contain a minimizer.

\begin{algorithm}[!ht]
\caption{Computation of the univariate DRRO newsvendor policy}
\label{alg:newsvendor_drro_policy}
\begin{algorithmic}[1]
\Require Sorted samples $\hat\xi_1\leq\cdots\leq\hat\xi_N$, prices $b,s$, radius $\delta$, order $p$, interval $[\underline\theta,\overline\theta]$, tolerance $\varepsilon$
\Ensure Approximate DRRO order quantity $\theta^*$
\While{$\overline\theta-\underline\theta>\varepsilon$}
    \State $\theta\gets(\underline\theta+\overline\theta)/2$
    \State Let $\operatorname{Eval}_\theta(\beta)$ return $(h(\beta;\theta),\gamma^*(\beta),q^*(\beta))$ by Algorithm~\ref{alg:newsvendor_p1} if $p=1$ and by Algorithm~\ref{alg:newsvendor_pgt1} if $p>1$
    \State Let $\mathcal{H}_\theta(\beta)$ denote the value component of $\operatorname{Eval}_\theta(\beta)$
    \State Compute $\beta_L\in\argmax_{\beta\in[0,\theta]}\mathcal{H}_\theta(\beta)$ and $\beta_R\in\argmax_{\beta\in[\theta,\infty)}\mathcal{H}_\theta(\beta)$ by scalar concave maximization
    \State $(v_L,\gamma^L,q^L)\gets\operatorname{Eval}_\theta(\beta_L)$
    \State $(v_R,\gamma^R,q^R)\gets\operatorname{Eval}_\theta(\beta_R)$
    \If{$v_L\geq v_R$}
        \State $(R(\theta),\beta^*,\gamma^*,q^*)\gets(v_L,\beta_L,\gamma^L,q^L)$
    \Else
        \State $(R(\theta),\beta^*,\gamma^*,q^*)\gets(v_R,\beta_R,\gamma^R,q^R)$
    \EndIf
    \State Compute $g\in\partial R(\theta)$ from \eqref{eq:subgradient} using $(\beta^*,\gamma^*,q^*)$
    \If{$|g|\leq\varepsilon$}
        \State \Return $\theta$
    \ElsIf{$g>0$}
        \State $\overline\theta\gets\theta$
    \Else
        \State $\underline\theta\gets\theta$
    \EndIf
\EndWhile
\State \Return $(\underline\theta+\overline\theta)/2$
\end{algorithmic}
\end{algorithm}

\section{Proofs for Section~\ref{sec:sensitivity}}
\label{app:sensitivity}

We collect the sensitivity proofs.

\subsection{A first-order fixed-pair expansion}

\begin{proposition}[First-order fixed-pair expansion]
\label{prop:first_order_fixed_pair}
Assume the standing support and moment assumptions and Assumption~\ref{ass:fo_regularity}. Let
$\theta_\delta\to\bar\theta$ and $\beta_\delta\to\bar\beta$. Then
\[
    F_\delta(\theta_\delta,\beta_\delta)
    =
    L(\theta_\delta)-L(\beta_\delta)+\delta\Gamma(\bar\theta,\bar\beta)+o(\delta).
\]
The $o(\delta)$ remainder is locally uniform in the decision--benchmark pair: it is uniform over
convergent pairs $(\theta_\delta,\beta_\delta)\to(\bar\theta,\bar\beta)$ that remain in a common bounded
subset of $\Theta\times\Theta$.
\end{proposition}

\begin{proof}
Set $g(\theta, \beta, x)=\ell(\theta,x)-\ell(\beta,x)$. Write
\[
    F_\delta(\theta_\delta,\beta_\delta)
    =L(\theta_\delta)-L(\beta_\delta)+\Delta_\delta,\qquad
    \Delta_\delta=\sup_{\mathbb P\in B_\delta^p(\mathbb{P}_0)}
    \{\mathbb E_{\mathbb P}[g(\theta_\delta, \beta_\delta, X)]-\mathbb E_{\mathbb{P}_0}[g(\theta_\delta, \beta_\delta, X)]\}.
\]
It remains to show $\Delta_\delta=\delta\Gamma(\bar\theta,\bar\beta)+o(\delta)$. Let $C_\delta(\mathbb{P}_0)$ be the set of couplings
$\pi$ of $(X,Y)$ with $X\sim\mathbb{P}_0$, $Y\in\Xi$, and $\mathbb E_\pi[\|Y-X\|^p]\le\delta^p$. Since the infimum
over couplings in the definition of the Wasserstein distance is attained, the ambiguity constraint can be written
equivalently in terms of such couplings, and hence
\[
    \Delta_\delta=\sup_{\pi\in C_\delta(\mathbb{P}_0)}\mathbb E_\pi[g(\theta_\delta, \beta_\delta, Y)-g(\theta_\delta, \beta_\delta, X)].
\]
By convexity of $\Xi$ and the fundamental theorem of calculus,
\[
    g(\theta_\delta, \beta_\delta, Y)-g(\theta_\delta, \beta_\delta, X)=\int_0^1
    \langle \nabla_x g(\theta_\delta, \beta_\delta, X+t(Y-X)),Y-X\rangle\,dt.
\]
H\"older's inequality and the transport budget $\mathbb E_\pi[\|Y-X\|^p]\le\delta^p$ give
\[
    \mathbb E_\pi[g(\theta_\delta, \beta_\delta, Y)-g(\theta_\delta, \beta_\delta, X)]
    \le
    \delta\int_0^1
    \Big(\mathbb E_\pi[\|\nabla_xg(\theta_\delta, \beta_\delta, X+t(Y-X))\|_*^q]\Big)^{1/q}dt.
\]
It remains to take the limit on the right-hand side as $\delta\downarrow0$. Equip product spaces with the sum norm. Let
$\delta_n\downarrow0$, $\pi_n\in C_{\delta_n}(\mathbb{P}_0)$, and $(X_n,Y_n)\sim\pi_n$. For $D(x)=(x,x)$,
the coupling $((X_n,Y_n),(X_n,X_n))$ gives
\[
    W_p(\pi_n,D_\#\mathbb{P}_0)^p
    \le \mathbb E_{\pi_n}[\|Y-X\|^p]\le\delta_n^p,
\]
so $\pi_n\to D_\#\mathbb{P}_0$ in $W_p$. Since $\theta_{\delta_n}\to\bar\theta$ and
$\beta_{\delta_n}\to\bar\beta$, we also have
$\delta_{(\theta_{\delta_n},\beta_{\delta_n})}\otimes\pi_n\to
\delta_{(\bar\theta,\bar\beta)}\otimes D_\#\mathbb{P}_0$ in $W_p$. For fixed $t\in[0,1]$, the function
\[
    (\theta,\beta,x,y)\mapsto \|\nabla_x g(\theta,\beta,x+t(y-x))\|_*^q
\]
is continuous and, by Assumption~\ref{ass:fo_regularity} and $q=p/(p-1)$, is bounded by
$C(1+\|x\|^p+\|y\|^p)$ uniformly in $t$ and in bounded parameter sets. Hence the $W_p$ characterization for
continuous functions with $p$-growth \citep[Theorem~6.9]{villani_optimal_2009} gives
\[
    \mathbb E_{\pi_n}[\|\nabla_xg(\theta_{\delta_n},\beta_{\delta_n},X+t(Y-X))\|_*^q]
    \to
    \mathbb E_{\mathbb{P}_0}[\|\nabla_xg(\bar\theta,\bar\beta,X)\|_*^q]
    =
    \Gamma(\bar\theta,\bar\beta)^q .
\]
The same growth bound and the transport budget give a uniform bound on the left-hand side over $n$ and
$t\in[0,1]$, so dominated convergence in $t$ gives
\[
    \int_0^1
    \Big(\mathbb E_{\pi_n}[\|\nabla_xg(\theta_{\delta_n},\beta_{\delta_n},X+t(Y-X))\|_*^q]\Big)^{1/q}dt
    \to \Gamma(\bar\theta,\bar\beta).
\]
Since the sequence $\pi_n\in C_{\delta_n}(\mathbb{P}_0)$ was arbitrary, the same conclusion holds
for near maximizers of the supremum over $C_\delta(\mathbb{P}_0)$, and therefore
\[
\limsup_{\delta\downarrow0}\frac{\Delta_\delta}{\delta} \leq \Gamma(\bar\theta, \bar\beta).
\]

For the lower bound, assume first that $\Gamma(\bar\theta,\bar\beta)>0$ and set
$G_{\bar\theta,\bar\beta}=\nabla_x\ell(\bar\theta,\cdot)-\nabla_x\ell(\bar\beta,\cdot)$. Choose a measurable duality selector $h$ with
$\|h(z)\|=1$, $\langle z,h(z)\rangle=\|z\|_*$, and $h(0)=0$, and define
\[
    T_{\bar\theta,\bar\beta}(x)=h(G_{\bar\theta,\bar\beta}(x))
    \frac{\|G_{\bar\theta,\bar\beta}(x)\|_*^{q-1}}{\Gamma(\bar\theta,\bar\beta)^{q-1}}.
\]
Then $\mathbb E_{\mathbb{P}_0}[\|T_{\bar\theta,\bar\beta}(X)\|^p]=1$ and
$\mathbb E_{\mathbb{P}_0}[\langle G_{\bar\theta,\bar\beta}(X),T_{\bar\theta,\bar\beta}(X)\rangle]=\Gamma(\bar\theta,\bar\beta)$. If
$\Xi=\mathbb R^n$, let $T_{\bar\theta,\bar\beta,\delta}=T_{\bar\theta,\bar\beta}$; otherwise let
\[
    T_{\bar\theta,\bar\beta,\delta}(x)
    =
    T_{\bar\theta,\bar\beta}(x)
    \mathbf 1_{\{\operatorname{dist}(x,\partial\Xi)>\delta\|T_{\bar\theta,\bar\beta}(x)\|\}}.
\]
Then $x+\delta T_{\bar\theta,\bar\beta,\delta}(x)\in\Xi$ and, since $\mathbb{P}_0(\partial\Xi)=0$,
$T_{\bar\theta,\bar\beta,\delta}\to T_{\bar\theta,\bar\beta}$ pointwise $\mathbb{P}_0$-a.s. Moreover,
$\|T_{\bar\theta,\bar\beta,\delta}-T_{\bar\theta,\bar\beta}\|\le\|T_{\bar\theta,\bar\beta}\|$ and
$T_{\bar\theta,\bar\beta}\in L^p(\mathbb{P}_0)$, so dominated convergence gives
$T_{\bar\theta,\bar\beta,\delta}\to T_{\bar\theta,\bar\beta}$ in $L^p(\mathbb{P}_0)$. The coupling
$(x\mapsto(x,x+\delta T_{\bar\theta,\bar\beta,\delta}(x)))_\#\mathbb{P}_0$ is feasible, and therefore
\[
    \begin{aligned}
    \Delta_\delta
    &\ge
    \mathbb E_{\mathbb{P}_0}\!\left[
    g(\theta_\delta,\beta_\delta,X+\delta T_{\bar\theta,\bar\beta,\delta}(X))-g(\theta_\delta,\beta_\delta,X)
    \right] \\
    &=
    \delta\int_0^1\mathbb E_{\mathbb{P}_0}\!\left[
    \left\langle
    \nabla_xg(\theta_\delta,\beta_\delta,X+t\delta T_{\bar\theta,\bar\beta,\delta}(X)),
    T_{\bar\theta,\bar\beta,\delta}(X)
    \right\rangle\right]dt .
    \end{aligned}
\]
For each fixed $t\in[0,1]$, $\delta T_{\bar\theta,\bar\beta,\delta}(X)\to0$ pointwise $\mathbb{P}_0$-a.s. and
$(\theta_\delta,\beta_\delta)\to(\bar\theta,\bar\beta)$. Continuity in Assumption~\ref{ass:fo_regularity} therefore gives
pointwise convergence of the gradient to $G_{\bar\theta,\bar\beta}(X)$. Moreover, since
$\|T_{\bar\theta,\bar\beta,\delta}\|\le\|T_{\bar\theta,\bar\beta}\|$, the growth bound in
Assumption~\ref{ass:fo_regularity} and $\delta\le1$ bound
\[
    \big\|\nabla_xg(\theta_\delta,\beta_\delta,X+t\delta T_{\bar\theta,\bar\beta,\delta}(X))
    -G_{\bar\theta,\bar\beta}(X)\big\|_*^q
\]
by an integrable multiple of $1+\|X\|^p+\|T_{\bar\theta,\bar\beta}(X)\|^p$. Dominated convergence thus gives
\[
    \nabla_xg(\theta_\delta,\beta_\delta,X+t\delta T_{\bar\theta,\bar\beta,\delta}(X))
    \to G_{\bar\theta,\bar\beta}(X)
    \quad\text{in }L^q(\mathbb{P}_0),
\]
while we already showed that
$T_{\bar\theta,\bar\beta,\delta}(X)\to T_{\bar\theta,\bar\beta}(X)$ in $L^p(\mathbb{P}_0)$. Thus the inner
expectation converges pointwise in $t$ to
$\mathbb E_{\mathbb{P}_0}[\langle G_{\bar\theta,\bar\beta}(X),T_{\bar\theta,\bar\beta}(X)\rangle]
=\Gamma(\bar\theta,\bar\beta)$. Moreover, H\"older's inequality, the growth bound in
Assumption~\ref{ass:fo_regularity}, and $\|T_{\bar\theta,\bar\beta,\delta}\|_{L^p}\le1$ bound the inner
expectation uniformly in $t$ and $\delta$. Dominated convergence in $t$ therefore yields
$\liminf_{\delta\downarrow0}\Delta_\delta/\delta\ge\Gamma(\bar\theta,\bar\beta)$. If $\Gamma(\bar\theta,\bar\beta)=0$, the lower
bound is trivial because $\Delta_\delta\ge0$.
\end{proof}

We use the following terminology for near worst-case measures. For a given pair $(\theta_\delta,\beta_\delta)$, a
sequence $\mathbb P_\delta\in B_\delta^p(\mathbb{P}_0)$ is $o(\delta^r)$-adversarial if
\[
    \mathbb E_{\mathbb P_\delta}[\ell(\theta_\delta,X)-\ell(\beta_\delta,X)]
    \ge F_\delta(\theta_\delta,\beta_\delta)-o(\delta^r).
\]

\subsection{Proof of Theorem~\ref{thm:fo_wdrro_selection}}

\begin{proof}
For the upper bound, fix $\theta^*\in\Theta^*$. Let $\delta_n\downarrow0$ attain
$\limsup_{\delta\downarrow0}R_\delta(\theta^*)/\delta$, and take an $o(\delta_n)$-active
$\beta_{\delta_n}$ for $\theta^*$. Assumption~\ref{ass:fo_bounded_beta} gives boundedness, so the active benchmarks
lie in a common bounded set, which is what is needed for the locally uniform remainder in
Proposition~\ref{prop:first_order_fixed_pair}. If a cluster point $\bar\beta$ of $\beta_{\delta_n}$ were
not in $\Theta^*$, Proposition~\ref{prop:first_order_fixed_pair}, applied along the convergent subsequence with its
locally uniform remainder, would give
$F_{\delta_n}(\theta^*,\beta_{\delta_n})\to L^*-L(\bar\beta)<0$. This contradicts activity, because
$F_{\delta_n}(\theta^*,\theta^*)=0$ implies
$F_{\delta_n}(\theta^*,\beta_{\delta_n})\ge-o(\delta_n)$ along the same subsequence. Thus every cluster point lies in
$\Theta^*$. Passing to a subsequence, not relabeled, with $\beta_{\delta_n}\to\bar\beta\in\Theta^*$, activity,
Proposition~\ref{prop:first_order_fixed_pair}, and $L^* - L(\beta_{\delta_n})\leq 0$ give
\[
    R_{\delta_n}(\theta^*)
    =F_{\delta_n}(\theta^*,\beta_{\delta_n}) + o(\delta_n)=
    L^*-L(\beta_{\delta_n})
    +\delta_n\Gamma(\theta^*,\bar\beta)
    +o(\delta_n)
    \le
    \delta_n A(\theta^*)+o(\delta_n).
\]
Hence $\limsup_{\delta\downarrow0}R_\delta(\theta^*)/\delta\le A(\theta^*)$.
Taking $\theta^*$ with $A(\theta^*)\le\alpha+\varepsilon$ and then $\varepsilon\downarrow0$ gives
$\limsup_{\delta\downarrow0}V_\delta/\delta\le\alpha$.

For the matching lower bound, take $\delta_n\downarrow0$ with
$V_{\delta_n}/\delta_n\to\liminf_{\delta\downarrow0}V_\delta/\delta$, and let $\theta_{\delta_n}$ be
$o(\delta_n)$-optimal. The upper bound gives $V_{\delta_n}=O(\delta_n)$, hence
$R_{\delta_n}(\theta_{\delta_n})=O(\delta_n)$. Since
$R_{\delta_n}(\theta_{\delta_n})\ge R_0(\theta_{\delta_n})=L(\theta_{\delta_n})-L^*$, we have
$L(\theta_{\delta_n})-L^*=O(\delta_n)$. Assumption~\ref{ass:fo_bounded_theta} gives boundedness of
$\theta_{\delta_n}$, so, after passing to a subsequence, $\theta_{\delta_n}\to\bar\theta$. If
$\bar\theta\notin\Theta^*$, then continuity of $L$ gives $L(\bar\theta)-L^*>0$, contradicting
$L(\theta_{\delta_n})-L^*\to0$. Thus $\bar\theta\in\Theta^*$. For every $\beta^*\in\Theta^*$,
\[
    R_{\delta_n}(\theta_{\delta_n})\ge F_{\delta_n}(\theta_{\delta_n},\beta^*)
    =
    L(\theta_{\delta_n})-L^*+\delta_n\Gamma(\bar\theta,\beta^*)+o(\delta_n).
\]
Since $L(\theta_{\delta_n})\ge L^*$, taking the liminf for each fixed $\beta^*$ and then the supremum over
$\beta^*$ gives
\[
    \liminf_{n\to\infty}\frac{R_{\delta_n}(\theta_{\delta_n})}{\delta_n}\ge A(\bar\theta).
\]
By near-optimality and the choice of $\delta_n$,
\[
    \liminf_{\delta\downarrow0}\frac{V_\delta}{\delta}
    =
    \lim_{n\to\infty}\frac{R_{\delta_n}(\theta_{\delta_n})}{\delta_n}
    \ge A(\bar\theta)\ge\alpha.
\]
Together with the upper bound, this proves $V_\delta=\alpha\delta+o(\delta)$.

The preceding argument used only one sequence chosen to attain the value liminf. We next return to an arbitrary
$o(\delta)$-optimal sequence to prove the optimizer-selection claims. Let
$\theta_\delta$ be any $o(\delta)$-optimal sequence. The value expansion implies
$R_\delta(\theta_\delta)=O(\delta)$, so the preceding localization argument shows that every cluster point lies in
$\Theta^*$. Along any subsequence $\delta_n\downarrow0$ with $\theta_{\delta_n}\to\bar\theta$, the same fixed-$\beta^*$
argument gives
\[
    A(\bar\theta)\le \liminf_{n\to\infty}\frac{R_{\delta_n}(\theta_{\delta_n})}{\delta_n}
    =
    \alpha.
\]
Since $\alpha\le A(\bar\theta)$ by definition, $A(\bar\theta)=\alpha$, and every cluster point $\bar\theta$ lies in
$\Theta_1^*$.
We first record a nominal-risk refinement for the decision sequence, which will be used for the active-benchmark
claims and the sharper rates under quadratic growth. Choose $\beta_\varepsilon^*\in\Theta^*$ with
$\Gamma(\bar\theta,\beta_\varepsilon^*)\ge A(\bar\theta)-\varepsilon=\alpha-\varepsilon$. Since
$R_{\delta_n}(\theta_{\delta_n})=V_{\delta_n}+o(\delta_n)=\alpha\delta_n+o(\delta_n)$, while
\[
    R_{\delta_n}(\theta_{\delta_n})
    \ge F_{\delta_n}(\theta_{\delta_n},\beta_\varepsilon^*)
    =
    L(\theta_{\delta_n})-L^*
    +\delta_n\Gamma(\bar\theta,\beta_\varepsilon^*)
    +o(\delta_n),
\]
we get
\[
    L(\theta_{\delta_n})-L^*
    \le
    \alpha\delta_n-\delta_n(\alpha-\varepsilon)+o(\delta_n)
    =
    \varepsilon\delta_n+o(\delta_n).
\]
Together with $L(\theta_{\delta_n})\ge L^*$, this gives
$0\le L(\theta_{\delta_n})-L^*\le\varepsilon\delta_n+o(\delta_n)$, and then $\varepsilon\downarrow0$ yields
$L(\theta_{\delta_n})-L^*=o(\delta_n)$. Since every subsequence has a further convergent subsequence, this implies
$L(\theta_\delta)-L^*=o(\delta)$.

It remains to identify which ERM benchmarks can remain active. Let $\beta_\delta$ be $o(\delta)$-active for
$\theta_\delta$. Assumption~\ref{ass:fo_bounded_beta} gives
boundedness. Along a subsequence $\delta_n\downarrow0$, suppose
$\theta_{\delta_n}\to\bar\theta$ and $\beta_{\delta_n}\to\bar\beta$. The preceding paragraph gives
$\bar\theta\in\Theta_1^*$ and $L(\theta_{\delta_n})-L^*=o(\delta_n)$. The same localization argument as above gives
$\bar\beta\in\Theta^*$: otherwise Proposition~\ref{prop:first_order_fixed_pair} would give
$F_{\delta_n}(\theta_{\delta_n},\beta_{\delta_n})\to L^*-L(\bar\beta)<0$, contradicting activity and
$R_{\delta_n}(\theta_{\delta_n})\ge0$. Activity, near-optimality, and
$V_\delta=\alpha\delta+o(\delta)$ give
$F_{\delta_n}(\theta_{\delta_n},\beta_{\delta_n})=\alpha\delta_n+o(\delta_n)$, while
\[
    F_{\delta_n}(\theta_{\delta_n},\beta_{\delta_n})
    =
    L(\theta_{\delta_n})-L(\beta_{\delta_n})+\delta_n\Gamma(\bar\theta,\bar\beta)+o(\delta_n).
\]
Hence
\[
    -\bigl(L(\beta_{\delta_n})-L^*\bigr)+\delta_n\Gamma(\bar\theta,\bar\beta)
    =\alpha\delta_n+o(\delta_n).
\]
Since $L(\beta_{\delta_n})\ge L^*$ and $\Gamma(\bar\theta,\bar\beta)\le A(\bar\theta)=\alpha$, we get
$\Gamma(\bar\theta,\bar\beta)=\alpha$ and $L(\beta_{\delta_n})-L^*=o(\delta_n)$, so
$\bar\beta\in\mathcal B_1(\bar\theta)$. Since every subsequence of active benchmarks has a further convergent
subsequence of this form, $L(\beta_\delta)-L^*=o(\delta)$.

If $L$ has quadratic growth around $\Theta^*$, then $L(\theta_\delta)-L^*=o(\delta)$ and
$L(\beta_\delta)-L^*=o(\delta)$ imply
$\operatorname{dist}(\theta_\delta,\Theta^*)=o(\sqrt{\delta})$ and
$\operatorname{dist}(\beta_\delta,\Theta^*)=o(\sqrt{\delta})$.
\end{proof}

\subsection{Proof of Lemma~\ref{lemma:bounded_near_optimizers}}

\begin{proof}
If $\Theta$ is bounded, there is nothing to prove. In the coercive case, set $D(\theta)=L(\theta)-L^*$. The
transport-slope domination condition and the coupling/H\"older argument used above give
\[
    F_\delta(\theta,\beta)
    \le
    L(\theta)-L(\beta)+C\delta\bigl(1+D(\theta)^\eta+D(\beta)^\eta\bigr).
\]
Fix $\theta^*\in\Theta^*$. The display above implies
\[
    R_\delta(\theta^*)\le
    \sup_{\beta\in\Theta}\{-D(\beta)+C\delta(1+D(\beta)^\eta)\}=O(\delta),
\]
because $D\ge0$, $\eta\le1$, and for small $\delta$ the expression is at most $2C\delta$ when $D(\beta)\le1$ and at
most $C\delta-(1-C\delta)D(\beta)\le C\delta$ when $D(\beta)\ge1$. Now let $\theta_\delta$ be
$o(\delta)$-optimal. Near-optimality gives
$R_\delta(\theta_\delta)\le R_\delta(\theta^*)+o(\delta)=O(\delta)$. Since
$R_\delta(\theta_\delta)\ge D(\theta_\delta)$, $D(\theta_\delta)$ is bounded, and coercivity makes
$\theta_\delta$ bounded.

Now let $\theta_\delta$ be bounded with $L(\theta_\delta)\to L^*$, and let $\beta_\delta$ be $o(\delta)$-active.
By activity,
$F_\delta(\theta_\delta,\beta_\delta)\ge R_\delta(\theta_\delta)-o(\delta)$, and
$R_\delta(\theta_\delta)\ge F_\delta(\theta_\delta,\theta_\delta)=0$. Hence
$F_\delta(\theta_\delta,\beta_\delta)\ge-o(\delta)$. The preceding bound gives
\[
    F_\delta(\theta_\delta,\beta_\delta)
    \le
    D(\theta_\delta)-D(\beta_\delta)
    +C\delta\bigl(1+D(\theta_\delta)^\eta+D(\beta_\delta)^\eta\bigr).
\]
Since $D(\theta_\delta)\to0$ and $\eta\le1$, the right-hand side tends to $-\infty$ if
$D(\beta_\delta)\to\infty$, contradicting activity. Thus $D(\beta_\delta)$ is bounded, and coercivity makes
$\beta_\delta$ bounded.
\end{proof}

\subsection{Proof of Lemma~\ref{lemma:attainment}}

\begin{proof}
Since $\Xi$ is compact, $\mathcal P(\Xi)$ is compact in the weak topology and the closed Wasserstein ball
$B_\delta^p(\mathbb{P}_0)$ is compact. The map
$(\theta,\beta,\mathbb P)\mapsto\mathbb E_{\mathbb P}[\ell(\theta,X)-\ell(\beta,X)]$ is continuous on the compact
set $\Theta\times\Theta\times B_\delta^p(\mathbb{P}_0)$. The maximum theorem gives attainment and continuity of
$F_\delta$, attainment of the supremum over $\beta$, continuity of $R_\delta$, and finally attainment of $V_\delta$ on
compact $\Theta$.
\end{proof}

\subsection{Proof of the discontinuity claim in Example~\ref{ex:rank_deficient_regression}}

\begin{proof}
Take $\Theta=\mathbb R^m$ and $\hat\sigma^2>0$. Let $v\in\ker X$ be nonzero with $\|v\|=1$. Since
$\hat\theta^\dagger\perp\ker X$, define $\beta_t=\hat\theta^\dagger+tv$ and
$\eta_i^t=r_i/(t+v^\top\hat\theta^\dagger)$. With
$\tilde x_i^t=x_i+\eta_i^t v$ and $\tilde y_i^t=y_i$, we have $x_i^\top v=0$ and hence
$(\tilde x_i^t)^\top\beta_t=y_i$. Thus $\beta_t$ interpolates the perturbed sample exactly, while
$N^{-1}\sum_i\|\tilde x_i^t-x_i\|^p=N^{-1}\sum_i|\eta_i^t|^p\to0$ as $t\to\infty$. Therefore, for every fixed $\delta>0$, these
perturbed empirical laws are feasible for all large $t$.

Fix $\delta>0$ and $\theta\in\Theta$. For all sufficiently large $t$, the perturbed empirical law is feasible, and
$\beta_t$ has zero loss under it. Hence
\[
    R_\delta(\theta)\ge F_\delta(\theta,\beta_t)
    \ge \frac1N\sum_{i=1}^N(y_i-(\tilde x_i^t)^\top\theta)^2.
\]
Letting $t\to\infty$ gives
\[
    R_\delta(\theta)\ge \frac1N\sum_{i=1}^N(y_i-x_i^\top\theta)^2
\]
for every $\theta$. Taking the infimum over $\theta$ yields $V_\delta\ge\hat\sigma^2$.
Conversely, evaluating at $\hat\theta^\dagger$ gives
\[
    V_\delta\le R_\delta(\hat\theta^\dagger)
    \le \sup_{\mathbb P\in B_\delta^p(\mathbb{P}_0)}
    \mathbb E_{\mathbb P}[(Y-X^\top\hat\theta^\dagger)^2].
\]
For $p\ge2$, Wasserstein continuity of the squared loss yields
$\limsup_{\delta\downarrow0}V_\delta\le\hat\sigma^2$. Thus
$\lim_{\delta\downarrow0}V_\delta=\hat\sigma^2$. Since $V_0=0$, the value is discontinuous at zero.
\end{proof}

\subsection{A second-order fixed-pair expansion around a unique ERM}

\begin{proposition}[Second-order fixed-pair expansion]
\label{prop:second_order_fixed_pair}
Assume the standing support and moment assumptions,
Assumptions~\ref{ass:fo_regularity},~\ref{ass:unique_second_order}, and~\ref{ass:mixed_smoothness}.
For bounded $a,b\in\mathbb R^d$,
\[
    F_\delta(\theta_0+\delta a,\theta_0+\delta b)
    =
    \delta^2\left[
        \frac12a^\top Ha-\frac12b^\top Hb+\gamma(a-b)
    \right]+o(\delta^2),
\]
locally uniformly in $(a,b)$.
\end{proposition}

\begin{proof}
Let $g_{\delta,a,b}(x)=\ell(\theta_0+\delta a,x)-\ell(\theta_0+\delta b,x)$ and write
$F_\delta(\theta_0+\delta a,\theta_0+\delta b)=\mathbb E_{\mathbb{P}_0}[g_{\delta,a,b}(X)]+\Delta_\delta(a,b)$. The nominal
term in this split is
\[
    \mathbb E_{\mathbb{P}_0}[g_{\delta,a,b}(X)]
    =
    L(\theta_0+\delta a)-L(\theta_0+\delta b)
    =
    \frac{\delta^2}{2}a^\top Ha-\frac{\delta^2}{2}b^\top Hb+o(\delta^2),
\]
 locally uniformly in $(a,b)$ by Assumption~\ref{ass:unique_second_order}. It remains to show
$\Delta_\delta(a,b)=\delta^2\gamma(a-b)+o(\delta^2)$.

As before,
$\Delta_\delta(a,b)=\sup_{\pi\in C_\delta(\mathbb{P}_0)}\mathbb E_\pi[g_{\delta,a,b}(Y)-g_{\delta,a,b}(X)]$. Define
$G_{\delta,a,b}=\delta^{-1}\nabla_xg_{\delta,a,b}$. The same calculus and H\"older estimate as in the first-order
proof gives
\[
    \Delta_\delta(a,b)
    \le
    \delta^2\sup_{\pi\in C_\delta(\mathbb{P}_0)}\int_0^1
    \Big(\mathbb E_\pi[\|G_{\delta,a,b}(X+t(Y-X))\|_*^q]\Big)^{1/q}dt.
\]
By Assumption~\ref{ass:mixed_smoothness}, $G_{\delta,a,b}(z)\to B(z)(a-b)$ with the required uniform growth on bounded
sets of $(a,b)$. The same argument as in Proposition~\ref{prop:first_order_fixed_pair} therefore gives
$\limsup_{\delta\downarrow0}\Delta_\delta(a,b)/\delta^2\le\gamma(a-b)$, locally uniformly in $(a,b)$.

For the lower bound, let $u=a-b$. If $\gamma(u)=0$, use $\Delta_\delta(a,b)\ge0$. Otherwise, with the
measurable duality selector $h$ from the proof of Proposition~\ref{prop:first_order_fixed_pair}, set
\[
    T_u(x)=h(B(x)u)\frac{\|B(x)u\|_*^{q-1}}{\gamma(u)^{q-1}}.
\]
Then $\mathbb E_{\mathbb{P}_0}[\|T_u(X)\|^p]=1$ and
$\mathbb E_{\mathbb{P}_0}[\langle B(X)u,T_u(X)\rangle]=\gamma(u)$. Use $T_{u,\delta}=T_u$ if $\Xi=\mathbb R^n$ and otherwise
truncate it by the boundary indicator from the lower-bound proof of Proposition~\ref{prop:first_order_fixed_pair}. The
same truncation argument gives $T_{u,\delta}\to T_u$ in $L^p(\mathbb{P}_0)$ and
$x+\delta T_{u,\delta}(x)\in\Xi$. Hence the coupling induced by
$Y=X+\delta T_{u,\delta}(X)$ is feasible. Applying the fundamental theorem of calculus to
$g_{\delta,a,b}$ and using $\nabla_xg_{\delta,a,b}=\delta G_{\delta,a,b}$ gives
\[
    \frac{\Delta_\delta(a,b)}{\delta^2}
    \ge
    \int_0^1
    \mathbb E_{\mathbb{P}_0}[
    \langle G_{\delta,a,b}(X+t\delta T_{u,\delta}(X)),T_{u,\delta}(X)\rangle]dt,
\]
Applying the same dominated-convergence and H\"older argument as in the lower-bound proof of
Proposition~\ref{prop:first_order_fixed_pair}, with Assumption~\ref{ass:mixed_smoothness} in place of
Assumption~\ref{ass:fo_regularity}, gives
$G_{\delta,a,b}(X+t\delta T_{u,\delta}(X))\to B(X)u$ in $L^q(\mathbb{P}_0)$ for each fixed $t$, while H\"older's
inequality gives an integrable bound that is uniform in $t\in[0,1]$. Dominated convergence in $t$ therefore gives
convergence of the right-hand side in the last equation to
$\mathbb E_{\mathbb{P}_0}[\langle B(X)u,T_u(X)\rangle]=\gamma(u)$.
\end{proof}

\subsection{Proof of Theorem~\ref{thm:unique_second_order}}

\begin{proof}
We first show that active benchmarks localize. Let $a_\delta$ be bounded, set
$\theta_{\delta,a_\delta}=\theta_0+\delta a_\delta$, and let $\beta_\delta$ be $o(\delta^2)$-active for this decision.
By Assumption~\ref{ass:so_bounded_beta}, $\beta_\delta$ is bounded. If a subsequence had a cluster point
$\bar\beta\ne\theta_0$, then $L(\bar\beta)>L(\theta_0)$, and Proposition~\ref{prop:first_order_fixed_pair} would give
$F_\delta(\theta_{\delta,a_\delta},\beta_\delta)\to L(\theta_0)-L(\bar\beta)<0$ along that subsequence. This contradicts activity:
since $F_\delta(\theta_{\delta,a_\delta},\theta_{\delta,a_\delta})=0$, an active benchmark must attain
$F_\delta(\theta_{\delta,a_\delta},\beta_\delta)\ge-o(\delta^2)$ and therefore have a nonnegative limit. Thus
$\beta_\delta\to\theta_0$.

We now sharpen this convergence to $\beta_\delta=\theta_0+O(\delta)$. Let $s_\delta=\|\beta_\delta-\theta_0\|$.
Since $\theta_{\delta,a_\delta}\to\theta_0$ and $\beta_\delta\to\theta_0$, both parameters lie in the neighborhood
from Assumption~\ref{ass:mixed_smoothness} for small $\delta$. Thus, for
$g_\delta(x)=\ell(\theta_{\delta,a_\delta},x)-\ell(\beta_\delta,x)$,
\[
    \nabla_x g_\delta(x)
    =
    \int_0^1\nabla_x\nabla_\theta\ell(\beta_\delta+t(\theta_{\delta,a_\delta}-\beta_\delta),x)
    (\theta_{\delta,a_\delta}-\beta_\delta)\,dt,
\]
 and hence, by the growth bound in Assumption~\ref{ass:mixed_smoothness},
\[
    \|\nabla_x g_\delta(x)\|_*
    \le C(1+\|x\|^{p-1})\|\theta_{\delta,a_\delta}-\beta_\delta\|.
\]
The same coupling/H\"older estimate as in Proposition~\ref{prop:first_order_fixed_pair} then gives
\[
    F_\delta(\theta_{\delta,a_\delta},\beta_\delta)
    \le
    L(\theta_{\delta,a_\delta})-L(\beta_\delta)
    +C\delta\|\theta_{\delta,a_\delta}-\beta_\delta\|.
\]
Assumption~\ref{ass:unique_second_order} gives local quadratic upper and lower bounds around $\theta_0$ because
$H\succ0$; hence
$L(\theta_{\delta,a_\delta})-L(\theta_0)\le C\delta^2$ and
$L(\beta_\delta)-L(\theta_0)\ge cs_\delta^2$. Also,
\[
    \|\theta_{\delta,a_\delta}-\beta_\delta\|
    \le
    \|\theta_{\delta,a_\delta}-\theta_0\|+\|\beta_\delta-\theta_0\|
    \le C\delta+s_\delta .
\]
Combining these bounds with the previous display and increasing $C$ if necessary gives
\[
    F_\delta(\theta_{\delta,a_\delta},\beta_\delta)
    \le
    C\delta^2-cs_\delta^2+C\delta s_\delta .
\]
On the other hand, activity and
$F_\delta(\theta_{\delta,a_\delta},\theta_{\delta,a_\delta})=0$ imply
$F_\delta(\theta_{\delta,a_\delta},\beta_\delta)\ge-o(\delta^2)$. Hence
\[
    C\delta^2-cs_\delta^2+C\delta s_\delta\ge-o(\delta^2).
\]
This forces $s_\delta=O(\delta)$; otherwise, along a subsequence with $s_\delta/\delta\to\infty$, the negative
quadratic term $-cs_\delta^2$ would dominate $C\delta^2+C\delta s_\delta$ and contradict the last display.

For bounded $a_\delta$, active benchmarks can be written as
$\beta_\delta=\theta_0+\delta b_\delta$ with $b_\delta=O(1)$. Proposition~\ref{prop:second_order_fixed_pair} then gives
\[
    R_\delta(\theta_0+\delta a_\delta)
    =
    \delta^2\sup_b
    \left\{\frac12a_\delta^\top Ha_\delta-\frac12b^\top Hb+\gamma(a_\delta-b)\right\}
    +o(\delta^2) = \delta^2\rho(a_\delta)+o(\delta^2)
\]
where
\[
    \rho(a)=\sup_w\{w^\top Ha-\tfrac12w^\top Hw+\gamma(w)\}.
\]

Let $\theta_\delta$ be $o(\delta^2)$-optimal. The expansion above gives
$R_\delta(\theta_0)=\delta^2\rho(0)+o(\delta^2)$. Since
$0\leq R_\delta(\theta_\delta)\leq R_\delta(\theta_0)+o(\delta^2)$, it follows that
$R_\delta(\theta_\delta)=O(\delta^2)$. Since also
$R_\delta(\theta_\delta)\ge R_0(\theta_\delta)=L(\theta_\delta)-L(\theta_0)$, it must hold
$L(\theta_\delta)-L(\theta_0)=O(\delta^2)$. Since $\theta_\delta$ is bounded by Assumption~\ref{ass:so_bounded_theta}
and the ERM solution is unique, this implies
$\theta_\delta\to\theta_0$. Since Assumption~\ref{ass:unique_second_order} implies the local lower bound
$L(\theta)-L(\theta_0)\ge c\|\theta-\theta_0\|^2$, we get
$\|\theta_\delta-\theta_0\|=O(\delta)$, so $a_\delta=(\theta_\delta-\theta_0)/\delta$ is bounded.

Since $\theta_\delta=\theta_0+\delta a_\delta$ with $a_\delta=O(1)$, the expansion
$R_\delta(\theta_0+\delta a)=\delta^2\rho(a)+o(\delta^2)$ holds uniformly over a bounded set containing
$a_\delta$. Hence $R_\delta(\theta_\delta)/\delta^2=\rho(a_\delta)+o(1)$. Therefore
$V_\delta/\delta^2\to\inf_a\rho(a)$. Since $\gamma$ is even, for any $a,w$ both signs of $w$ imply
$\rho(a)\ge\gamma(w)-\frac12w^\top Hw$. Taking the supremum over $w$ gives $\rho(a)\ge\rho(0)$.
Moreover, because $\gamma$ is a finite seminorm, it is the support function of its polar set
$C_\gamma:=\{z:z^\top w\le\gamma(w)\ \text{for all }w\}$. Thus
$\gamma(w)=\sup_{z\in C_\gamma}z^\top w$, and
\[
    \begin{aligned}
    \rho(a)
    &=
    \sup_w\sup_{z\in C_\gamma}
    \{w^\top(Ha+z)-\tfrac12w^\top Hw\} \\
    &=
    \frac12\sup_{z\in C_\gamma}(Ha+z)^\top H^{-1}(Ha+z),
    \end{aligned}
\]
where the last equality maximizes the strictly concave quadratic in $w$. The set $C_\gamma$ is compact, convex, and
centrally symmetric. If $a\ne0$, choose a maximizer $z_0$ of $z^\top H^{-1}z$ with sign such that $a^\top z_0\ge0$. Then
$(Ha+z_0)^\top H^{-1}(Ha+z_0)>z_0^\top H^{-1}z_0$, so $\rho(a)>\rho(0)$. Thus $0$ is the unique minimizer,
$V_\delta=\kappa\delta^2+o(\delta^2)$, and $a_\delta\to0$, i.e. $\theta_\delta=\theta_0+o(\delta)$.

Let $\beta_\delta$ be $o(\delta^2)$-active for $\theta_\delta$ and set
$w_\delta=(\theta_\delta-\beta_\delta)/\delta$. Since $\theta_\delta=\theta_0+o(\delta)$ and the active-benchmark
localization above gives $\beta_\delta=\theta_0+O(\delta)$, the sequence $w_\delta$ is bounded. Along any subsequence
$w_\delta\to w$, write $\beta_\delta=\theta_0+\delta(a_\delta-w_\delta)$. Proposition~\ref{prop:second_order_fixed_pair}
gives
\[
    F_\delta(\theta_\delta,\beta_\delta)
    =
    \delta^2\left[
    \frac12a_\delta^\top Ha_\delta
    -\frac12(a_\delta-w_\delta)^\top H(a_\delta-w_\delta)
    +\gamma(w_\delta)
    \right]+o(\delta^2).
\]
Since $a_\delta\to0$ and $w_\delta\to w$, the bracket converges to
$\gamma(w)-\frac12w^\top Hw$. Activity and $V_\delta=\kappa\delta^2+o(\delta^2)$ also give
$F_\delta(\theta_\delta,\beta_\delta)=\kappa\delta^2+o(\delta^2)$. Hence
\[
    \kappa=\gamma(w)-\frac12w^\top Hw.
\]
Thus $w\in\mathcal W^*$ and $\beta_\delta=\theta_0-\delta w+o(\delta)$. Conversely, if
$w\in\mathcal W^*$ and $\beta_\delta^w=\theta_0-\delta w$, then the fixed-pair expansion gives
$F_\delta(\theta_0,\beta_\delta^w)=\kappa\delta^2+o(\delta^2)=R_\delta(\theta_0)+o(\delta^2)$, so this branch is
$o(\delta^2)$-active.

Writing $w=tu$ with $u\ne0$ and $t\ge0$ gives
$\gamma(tu)-\frac12t^2u^\top Hu=t\gamma(u)-\frac12t^2u^\top Hu$, maximized at
$t^*(u)=\gamma(u)/(u^\top Hu)$. Therefore
\[
    \kappa=\frac12\sup_{u\ne0}\frac{\gamma(u)^2}{u^\top Hu}.
\]
\end{proof}

\subsection{Proof of Theorem~\ref{thm:exact_quadratic}}

\begin{proof}
Let $\mathbb P\in B_\delta^p(\mathbb{P}_0)$ have mean $\mu$ and covariance matrix $\Sigma$. Then
\[
    \mathbb E_{\mathbb P}[\ell(\theta,X)]
    =
    \theta^\top Q\theta+2\theta^\top(S^\top \mu+u)
    +\mu^\top R\mu+\operatorname{Tr}(\Sigma R)+2\mu^\top r.
\]
For fixed $\mathbb P$, the minimizer is $\beta^*(\mu)=-Q^{-1}(S^\top \mu+u)$ and the terms involving $R,r,\Sigma$
cancel in the regret. Thus
\[
    R_\delta(\theta)
    =
    \sup_{\mathbb P\in B_\delta^p(\mathbb{P}_0)}
    \left\{\theta^\top Q\theta+2\theta^\top(S^\top \mu+u)
    +(S^\top \mu+u)^\top Q^{-1}(S^\top \mu+u)\right\}.
\]
The expression depends on $\mathbb P$ only through $\mu$. For any ground norm and $p\ge1$, the attainable means are exactly
$\{\mu:\|\mu-\hat\mu\|\le\delta\}$: one inclusion is Jensen's inequality, and the other follows by translating $\mathbb{P}_0$ by a
constant vector. Hence
\[
    R_\delta(\theta)=
    \max_{\|\mu-\hat\mu\|\le\delta}
    \left\{\theta^\top Q\theta+2\theta^\top(S^\top \mu+u)
    +(S^\top \mu+u)^\top Q^{-1}(S^\top \mu+u)\right\}.
\]
At $\theta_0=-Q^{-1}(S^\top\hat\mu+u)$ this becomes
\[
    R_\delta(\theta_0)
    =
    \max_{\|\mu-\hat\mu\|\le\delta}
    (\mu-\hat\mu)^\top SQ^{-1}S^\top(\mu-\hat\mu)
    =
    \delta^2\sup_{\|z\|\le1}z^\top SQ^{-1}S^\top z.
\]
This coefficient equals $\kappa=\sup_{w\ne0}\frac{\|Sw\|_*^2}{w^\top Qw}$ because
\[
    \begin{aligned}
    \sup_{\|z\|\le1}z^\top SQ^{-1}S^\top z
    &=
    \sup_{\|z\|\le1}\sup_{w}
    \{2z^\top Sw-w^\top Qw\} \\
    &=
    \sup_w
    \{2\|Sw\|_*-w^\top Qw\} \\
    &=
    \sup_{w\ne0}\sup_{t\ge0}
    \{2t\|Sw\|_*-t^2w^\top Qw\} \\
    &=
    \sup_{w\ne0}\frac{\|Sw\|_*^2}{w^\top Qw},
    \end{aligned}
\]
where the first equality uses completion of squares in the $Q$-norm and the second uses the dual
representation of $\|\cdot\|_*$.

It remains to prove global optimality. Let
$f(\theta,\mu)=\theta^\top Q\theta+2\theta^\top(S^\top \mu+u)+(S^\top \mu+u)^\top Q^{-1}(S^\top \mu+u)$. Then
$R_\delta(\theta)=\max_{\|\mu-\hat\mu\|\le\delta}f(\theta,\mu)$ is convex in $\theta$ as a pointwise maximum of
convex quadratics. Let $z_*$ maximize
$z^\top SQ^{-1}S^\top z$ over $\|z\|\le1$. At $\theta_0$, the points $\hat\mu+\delta z_*$ and
$\hat\mu-\delta z_*$ are maximizers in the problem defining $R_\delta(\theta_0)$, with $\theta$-gradients
$\pm2\delta S^\top z_*$. By the Levin--Valadier theorem \citep[Theorem~9.27]{shapiro_lectures_2009},
\[
    \partial R_\delta(\theta_0)
    =
    \operatorname{conv}\{\nabla_\theta f(\theta_0,\mu):
    \mu\in\operatorname*{argmax}_{\|\mu-\hat\mu\|\le\delta}f(\theta_0,\mu)\},
\]
so $0\in\partial R_\delta(\theta_0)$, which is sufficient for global optimality by convexity. The translated measures
$(x\mapsto x\pm\delta z_*)_\#\mathbb{P}_0$ are feasible, attain the optimal means, and therefore are worst-case
adversarial measures for $\theta_0$.
\end{proof}

\section{Proofs of Section~\ref{sec:problems_of_interest}}
\label{app2}

We prove Theorem~\ref{thm:np_hardness_regret} by reducing the NP-complete 1-in-3 SAT+ problem to the regret computation problem. 
The 1-in-3 SAT+ problem is a variant of the well-known NP-complete 3-SAT problem and is defined as follows:

\begin{definition}
    Let $W=\{w_1,\ldots,w_N\}$ be a set of disjunctive clauses of the form $w_i=\bigvee_{j\in\mathcal{S}_i} v_j$ of only positive literals 
    in the variables $V=\{v_1,\ldots,v_d\}$ where $\mathcal{S}_i\subset[d]$ are sets of size three. The 1-in-3 SAT+ problem asks whether 
    there is a truth assignment for $V$ such that in each of the $N$ clauses in $W$ exactly one variable is true.
\end{definition}

Just like the 3-SAT problem its 1-in-3 SAT+ variant is also NP-complete \cite[p. 259]{garey_computers_1990}. 
We occasionally need to access the individual elements in $\mathcal{S}_i$. For this purpose we write $\mathcal{S}_i=\{j^i_1,j^i_2,j^i_3\}$.

\subsection{Auxiliary Lemmas}

For the proof of Theorem~\ref{thm:np_hardness_regret}, we need the following two auxiliary lemmas.
\begin{lemma}\label{lemma:np_hardness_helper} For $\beta_1, \beta_2, \beta_3\in[0, 1]$ and $\lambda\geq 0$ it holds
\begin{align}
    &\max_{k\in[3]}\sup_{z\in \mathbb{R}^3}\min_{m\in[3]}\left\{z_k - z_m + \beta_m - \lambda \Vert z\Vert_1 \right\} \nonumber\\
    &\quad = \max\left\{\min(\beta_1, \beta_2, \beta_3), \lambda \min(\beta_1, \beta_2, \beta_3) + (1 - \lambda)\max(\beta_1, \beta_2, \beta_3)\right\}\label{eq:formula_helper_lemma}.
\end{align}
For $\lambda \geq 1$ a maximizer is given by $z^*=\vec{0}_3$ and for $\lambda\in[0, 1]$ there is a maximizer that satisfies $z^*\in[-1, 1]^3$. 
\end{lemma}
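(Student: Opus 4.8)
The plan is to first strip the $m$-independent terms out of the inner minimisation, then reduce the three-dimensional supremum to a one-parameter optimisation that separates across coordinates, and finally read off both the value and the maximiser by a short case split on whether $\lambda \le 1$ or $\lambda \ge 1$. Throughout I write $\beta_{\min}=\min(\beta_1,\beta_2,\beta_3)$ and $\beta_{\max}=\max(\beta_1,\beta_2,\beta_3)$, and I note that the two-term $\max$ on the right-hand side is just a single formula valid for all $\lambda\ge 0$: for $\lambda\in[0,1]$ the term $(1-\lambda)\beta_{\max}+\lambda\beta_{\min}\ge\beta_{\min}$ dominates, while for $\lambda\ge 1$ it is $\le\beta_{\min}$, so $\beta_{\min}$ dominates.

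First I would observe that for fixed $k$ the terms $z_k$ and $-\lambda\|z\|_1$ do not depend on $m$, so the inner minimisation factors as
\begin{align*}
\min_{m\in[3]}\bigl\{z_k - z_m + \beta_m - \lambda\|z\|_1\bigr\} = z_k - \lambda\|z\|_1 + \min_{m\in[3]}(\beta_m - z_m) =: g_k(z),
\end{align*}
a concave function of $z$. Introducing the epigraph variable $t = \min_m(\beta_m - z_m)$ I would rewrite $\sup_z g_k(z)$ as $\sup\{\,z_k + t - \lambda\|z\|_1 : z_m + t \le \beta_m\ \forall m\,\}$; this reformulation is exact, since for any $z$ the choice $t = \min_m(\beta_m - z_m)$ is feasible and attains $g_k(z)$, while feasibility conversely forces $t \le \min_m(\beta_m - z_m)$. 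The payoff is that, with $t$ fixed, the maximisation over $z$ decouples coordinatewise: each $m\ne k$ contributes $\sup_{z_m \le \beta_m - t}(-\lambda|z_m|)$ and coordinate $k$ contributes $\sup_{z_k \le \beta_k - t}(z_k - \lambda|z_k|)$, both of which I can evaluate in closed form as explicit piecewise-linear functions of $\beta_m-t$ and $\beta_k-t$.

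With these closed forms, $\varphi_k(t):=\sup_z\{\,z_k+t-\lambda\|z\|_1 : z_m+t\le\beta_m\,\}$ is a concave piecewise-linear function of $t$ whose breakpoints lie at $\beta_1,\beta_2,\beta_3$, and a short slope computation shows its maximum is always attained at $t=\beta_{\min}$. Taking $k$ to be an index with $\beta_k=\beta_{\max}$ and setting $t=\beta_{\min}$ yields the candidate maximiser $z^\ast=(\beta_{\max}-\beta_{\min})\,e_k$ (the $k$-th standard basis vector, all other coordinates zero), with objective value $(1-\lambda)\beta_{\max}+\lambda\beta_{\min}$. When $\lambda\ge 1$ the same slope analysis instead pins the optimum at $z^\ast=\vec 0$ with value $\beta_{\min}$. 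Since $\beta_i\in[0,1]$, in the first case $z^\ast\in[0,1]^3\subseteq[-1,1]^3$, which gives exactly the stated maximiser claims in both regimes.

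It then remains to prove the matching upper bound $g_k(z)\le\max\{\beta_{\min},\,(1-\lambda)\beta_{\max}+\lambda\beta_{\min}\}$ for \emph{every} $k$ and $z$, which I regard as the main obstacle. I would prove it via the chain $g_k(z)=z_k+t-\lambda\|z\|_1\le z_k+t-\lambda|z_k|\le(1-\lambda)z_k+t$ (using $\|z\|_1\ge|z_k|\ge z_k$ and $\lambda\ge 0$) together with the feasibility bound $z_k+t\le\beta_k\le\beta_{\max}$, while additionally retaining the penalty $-\lambda|z_j|$ at the index $j=\arg\min_m\beta_m$ to bound $t$ from above through $|z_j|\ge t-\beta_{\min}$. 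A case split on the sign of $t-\beta_{\min}$, on whether $j=k$, and on $\lambda\lessgtr 1$ then collapses each branch to the claimed bound. The delicate point is precisely to keep enough of the $\ell_1$-penalty rather than discarding it entirely, so that large $t$ is penalised; once the constraints $z_k+t\le\beta_{\max}$ and $|z_j|\ge t-\beta_{\min}$ are combined, the $\lambda$-weighted combination $(1-\lambda)\beta_{\max}+\lambda\beta_{\min}$ emerges directly. Matching this upper bound against the explicit construction finishes the proof and confirms that the outer $\max$ over $k$ selects exactly the larger of the two terms.
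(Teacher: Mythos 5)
Your proposal is correct, and it takes a genuinely different route from the paper. The paper orders $\beta_1\ge\beta_2\ge\beta_3$ w.l.o.g., splits on $\lambda\ge 1$ versus $\lambda\in(0,1)$, and for each $k\in[3]$ separately guesses a convex-combination upper bound via $\min(a,b,c)\le(1-\lambda)a+\lambda b$ together with an explicit matching maximizer (e.g.\ $z^*=(\beta_1-\beta_2,\,0,\,-(\beta_2-\beta_3))$ for $k=1$); you instead introduce the epigraph variable $t=\min_m(\beta_m-z_m)$, after which the supremum decouples coordinatewise into one-dimensional problems with closed-form values, reducing everything to maximizing a concave piecewise-linear $\varphi_k(t)$ with breakpoints at the $\beta_m$. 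Your route is more mechanical and requires no guessing, and it produces a different but equally valid maximizer $z^*=(\beta_{\max}-\beta_{\min})e_k\in[0,1]^3$, which suffices for the $z^*\in[-1,1]^3$ claim the NP-hardness reduction needs; the paper's route is shorter once the right combinations and maximizers are guessed. Two remarks on your write-up. First, your final paragraph is actually unnecessary: the epigraph reformulation and the coordinatewise closed forms are exact, not relaxations, so $\sup_z g_k(z)=\sup_t \varphi_k(t)$ and the slope computation already delivers the upper bound rather than a mere candidate. Second, if you do keep the separate verification, the displayed chain $g_k(z)\le(1-\lambda)z_k+t$ is, on its own, too lossy in the branch $z_k<0$, $t>\beta_{\min}$, where $t$ is not bounded above (take all coordinates of $z$ very negative); the fix is exactly the ingredient you name afterwards: retain $-\lambda\lvert z_j\rvert$ at $j=\arg\min_m\beta_m$, note that $z_k-\lambda\lvert z_k\rvert\le(1-\lambda)z_k$ holds for all $z_k$ (including $z_k<0$), and combine $z_k+t\le\beta_{\max}$ with $\lvert z_j\rvert\ge t-\beta_{\min}$ via the split $z_k+t=\lambda(z_k+t)+(1-\lambda)(z_k+t)\le \lambda(z_k+t)+(1-\lambda)\beta_{\max}$, which yields $g_k\le(1-\lambda)\beta_{\max}+\lambda\beta_{\min}$ in that branch (the case $j=k$ gives $g_k\le\beta_{\min}$ directly from $z_k+t\le\beta_{\min}$). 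So your sketch does close, though the first remark makes that extra step moot.
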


\begin{proof}
    We assume w.l.o.g.\ that $\beta_1\geq \beta_2\geq \beta_3$. First we consider the case $\lambda \geq 1$. Now note that the inner supremum over $z$ can be upper bounded as follows:
    \begin{align*}
        \sup_{z\in \mathbb{R}^3}\min_{m\in[3]}\left\{z_k - z_m + \beta_m - \lambda \Vert z\Vert_1 \right\}\leq \sup_{z\in\mathbb{R}^3} \beta_3 + z_k - z_3 - \lambda\Vert z\Vert_1\leq \beta_3 = \min(\beta_1, \beta_2, \beta_3)
    \end{align*}
    Here we first upper bounded the inner minimum by picking $m=3$ and then used that $z_k - z_3 - \lambda\Vert z\Vert_1\leq 0$ for $\lambda \geq 1$. 
    It is now easy to check that this upper bound is attained by $z=\vec{0}_3$, i.e., it holds for every $k\in[3]$
    \begin{align*}
        \sup_{z\in \mathbb{R}^3}\min_{m\in[3]}\left\{z_k - z_m + \beta_m - \lambda \Vert z\Vert_1 \right\} = \min(\beta_1, \beta_2, \beta_3)
    \end{align*}
    and therefore also
    \begin{align*}
        \max_{k=1,\ldots,3}\sup_{z\in \mathbb{R}^3}\min_{m\in[3]}\left\{z_k - z_m + \beta_m - \lambda \Vert z\Vert_1 \right\} = \min(\beta_1, \beta_2, \beta_3).
    \end{align*}
    Now consider the case $\lambda \in [0, 1)$. We inspect all three different values for $k$ that make up the outer maximum individually: 
    \begin{enumerate}
        \item \underline{$k=1$}: We can upper bound the sup over $z$ as follows:
        \begin{align*}
            \sup_{z\in \mathbb{R}^3}\min_{m\in[3]}\left\{z_1 - z_m + \beta_m - \lambda \Vert z\Vert_1 \right\}&\leq \sup_{z\in \mathbb{R}^3}\left\{(1 - \lambda) \beta_1 + \lambda (z_1 - z_3 + \beta_3) - \lambda \Vert z\Vert_1\right\}\\
            &\leq (1 - \lambda)\beta_1 + \lambda\beta_3\\
            &= (1 - \lambda)\max(\beta_1, \beta_2, \beta_3) + \lambda\min(\beta_1, \beta_2, \beta_3)
        \end{align*}
        In the first inequality we used that $\min(a, b, c)\leq (1-\lambda)a + \lambda b$. In the second inequality we used that $\lambda (z_1 - z_3) - \lambda \Vert z\Vert_1\leq 0$.\\
        The upper bound is tight because it is attained by the vector with coordinates $z_1=\beta_1 - \beta_2$, $z_2=0$, and $z_3=-(\beta_2 - \beta_3)$ and so
        \begin{align*}
            \sup_{z\in \mathbb{R}^3}\min_{m\in[3]}\left\{z_1 - z_m + \beta_m - \lambda \Vert z\Vert_1 \right\} = (1 - \lambda)\max(\beta_1, \beta_2, \beta_3) + \lambda\min(\beta_1, \beta_2, \beta_3)
        \end{align*}
        \item \underline{$k=2$}: We can upper bound the supremum over $z$ as follows:
        \begin{align*}
            \sup_{z\in \mathbb{R}^3}\min_{m\in[3]}\left\{z_2 - z_m + \beta_m - \lambda \Vert z\Vert_1 \right\}&\leq \sup_{z\in \mathbb{R}^3}\left\{(1 - \lambda) \beta_2 + \lambda (z_2 - z_3 + \beta_3) - \lambda \Vert z\Vert_1\right\}\\
            &\leq (1 - \lambda)\beta_2 + \lambda\beta_3\\
            &= (1 - \lambda)\mathrm{middle}(\beta_1, \beta_2, \beta_3) + \lambda\min(\beta_1, \beta_2, \beta_3).
        \end{align*}
        The justification of the steps is the same as for the case $k=1$ above. Again, this upper bound is attained by the vector with coordinates $z_1=0$, $z_2=\beta_2 - \beta_3$, and $z_3=0$ and so
        \begin{align*}
            \sup_{z\in \mathbb{R}^3}\min_{m\in[3]}\left\{z_2 - z_m + \beta_m - \lambda \Vert z\Vert_1 \right\} = (1 - \lambda)\mathrm{middle}(\beta_1, \beta_2, \beta_3) + \lambda\min(\beta_1, \beta_2, \beta_3)
        \end{align*}
        \item \underline{$k=3$}: We can upper bound the supremum over $z$ as follows:
        \begin{align*}
            \sup_{z\in \mathbb{R}^3}\min_{m\in[3]}\left\{z_3 - z_m + \beta_m - \lambda \Vert z\Vert_1 \right\}\leq \sup_{z\in\mathbb{R}^3}\beta_3 - \lambda \Vert z\Vert_1 = \beta_3 = \min(\beta_1, \beta_2, \beta_3)
        \end{align*}
        Here we again upper bounded the minimum by picking $m = 3$. This upper bound is attained by setting $z=\vec{0}_3$ and so
        \begin{align*}
            \sup_{z\in \mathbb{R}^3}\min_{m\in[3]}\left\{z_3 - z_m + \beta_m - \lambda \Vert z\Vert_1 \right\} = \min(\beta_1, \beta_2, \beta_3). 
        \end{align*}
    \end{enumerate}
    Taking the max over $k$ of these three values we obtain that for $\lambda \in[0, 1)$ it holds
    \begin{align*}
        \max_{k\in[3]}\sup_{z\in \mathbb{R}^3}\min_{m\in[3]}\left\{z_k - z_m + \beta_m - \lambda \Vert z\Vert_1 \right\} = \lambda \min(\beta_1, \beta_2, \beta_3) + (1 - \lambda)\max(\beta_1, \beta_2, \beta_3).
    \end{align*}
    The maximizer in this case is $z^*=(\beta_1 - \beta_2, 0, -(\beta_2 - \beta_3))$ which clearly satisfies $z^*\in[-1, 1]^3$.
    Combining this with the value for $\lambda \geq 1$, we obtain the formula given in the lemma.
\end{proof}

\begin{lemma}\label{lemma:opt_prob_3sat}
    Associate with a 1-in-3 SAT+ instance $W=\{w_1,\ldots,w_N\}$ the optimization problem
    \begin{align}
    \max_{\beta\in\mathbb{R}^d} \quad & \sum_{i=1}^N \max_{j\in\mathcal{S}_i}\beta_j \label{eq:one_in_three_sat_opt_problem}\\
        \mathrm{s.t.} \quad & \quad 0\leq\beta_k\leq 1\quad \forall k\in[d], \nonumber\\
                           & \quad \sum_{j\in\mathcal{S}_i} \beta_j = 1\quad \forall i\in[N]\nonumber
    \end{align}
    Then $W$ is satisfiable if and only if the optimal value of \eqref{eq:one_in_three_sat_opt_problem} is $N$. If $W$ is not satisfiable, 
    then the optimal value of \eqref{eq:one_in_three_sat_opt_problem} is upper bounded by $N - 1$.
\end{lemma}

\begin{proof}
    W.l.o.g.\ assume that each clause $w_i=\bigvee_{j\in\mathcal{S}_i}v_j$ consists of \textit{distinct} literals. 
    We start by proving that the optimal value of \eqref{eq:one_in_three_sat_opt_problem} is $N$ if and only if $W$ is satisfiable. 
    Indeed, when $W$ is satisfiable, then the vector $\beta\in\mathbb{R}^d$ defined by 
    \begin{align*}
        \beta_i = \begin{cases}
            1, & \text{if } v_i \text{ is true}\\
            0, & \text{otherwise}
        \end{cases}
    \end{align*}
    is feasible for \eqref{eq:one_in_three_sat_opt_problem} and attains the value $N$ which is also an upper bound for the optimal value 
    since $\max_{j\in\mathcal{S}_i}\beta_j\leq 1$ by the constraints $\beta_k\in[0, 1]$. Conversely, if the optimal value of 
    \eqref{eq:one_in_three_sat_opt_problem} is $N$, then each summand $\max_{j\in\mathcal{S}_i}\beta_j$ must be $1$ which together with 
    the constraint $\sum_{j\in\mathcal{S}_i} \beta_j = 1$ implies that exactly one of $\beta_j$, $j\in\mathcal{S}_i$, is $1$ and 
    the other two are $0$. The corresponding assignments $v_k$ then clearly satisfy $W$.\\

    We now show that if the optimal value of \eqref{eq:one_in_three_sat_opt_problem} is strictly greater than $N - 1$, then it must indeed be equal to 
    $N$ and so $W$ is satisfiable. Hence, if $W$ is not satisfiable, then the corresponding optimal value of \eqref{eq:one_in_three_sat_opt_problem} 
    is at most $N - 1$. Assume that $\beta\in\mathbb{R}^d$ is feasible for \eqref{eq:one_in_three_sat_opt_problem} with optimal value strictly greater than $N - 1$. Define the set
    \begin{align*}
        \mathcal{I} = \bigcup_{i=1}^N \left\{k\,\Big\vert\, k = \min\argmax_{j\in\mathcal{S}_i}\beta_j\right\},
    \end{align*}
    i.e., for each summand $\max_{j\in\mathcal{S}_i}\beta_j$ we pick the maximizing index $k$ (if there are ties, we pick the one with the smaller index). 
    We show that $\bar\beta$ defined by
    \begin{align*}
        \bar\beta_i = \begin{cases}
            1, \quad \text{if } i\in\mathcal{I}\\
            0, \quad \text{else}
        \end{cases}
    \end{align*}
    is feasible and attains the value $N$ and hence corresponds to an instance that satisfies $W$. Clearly, $\bar\beta$ attains the value $N$ because we took 
    the maximizer of each summand and set it to $1$. Therefore, it only remains to check feasibility. Since we assumed distinct indices in each $\mathcal{S}_i$, 
    the only two problematic cases arise when there is a constraint $\sum_{j\in\mathcal{S}_i} \beta_j = 1$  for which two or three of the $\bar\beta_j$ are 
    set to $1$ (one $\bar\beta_j$ is definitely set to $1$ -- the maximizing one). We now argue that such situations cannot arise under our assumptions. 
    First assume that there is a constraint such that $\mathcal{S}_i\subset\mathcal{I}$, i.e., all three indices of $\mathcal{S}_i$ appear in $\mathcal{I}$. 
    Since $\beta_k\geq \frac{1}{3}$ for $k\in\mathcal{I}$, the constraint $\sum_{j\in\mathcal{S}_i}\beta_j=1$ implies $\beta_{j}=\frac{1}{3}$ for each $j\in\mathcal{S}_i$. 
    This means that $\max_{j\in\mathcal{S}_i}\beta_j=\frac{1}{3}$, i.e., we already lose $\frac{2}{3}$ towards the optimal value of $N$ in \eqref{eq:one_in_three_sat_opt_problem}. 
    Further, since each of $\beta_j$, $j\in\mathcal{S}_i$, is a maximizer in a clause, there must be at least two other clauses where the maximum is also $\frac{1}{3}$ 
    and so we lose $\frac{2}{3}$ towards the optimum in these clauses as well. Hence, in total we lose $2$ units towards the optimal value of $N$ which contradicts 
    the assumption that $\beta$ attains a value strictly greater than $N-1$. Now assume that there is a constraint $\sum_{j\in\mathcal S_i}\beta_j=1$
    such that exactly two indices in $\mathcal S_i$ belong to $\mathcal I$, say $\mathcal S_i\cap\mathcal I=\{i_1,i_2\}$. Let
    $r:=\min\argmax_{j\in\mathcal S_i}\beta_j$ be the index selected from the $i$-th summand, and let $s$ be the other element of
    $\mathcal S_i\cap\mathcal I$. Since $s\in\mathcal I$ but $s$ was not selected from the $i$-th summand, there exists another clause $h\neq i$ from which $s$
    was selected, i.e., $s=\min\argmax_{j\in\mathcal S_h}\beta_j$. Hence the loss relative to the
    upper bound $N$ is $1-\beta_r$ in the $i$-th summand and $1-\beta_s$ in the $h$-th summand. Since $r,s\in\mathcal S_i$ and $\beta$ is feasible, we have
    $\beta_r+\beta_s\leq\sum_{j\in\mathcal S_i}\beta_j=1$. Thus the combined loss over these two summands is at least
    $2-(\beta_r+\beta_s)\geq1$, contradicting the assumption that $\beta$ attains a value strictly greater than $N-1$. Therefore no constraint can contain two
    indices from $\mathcal I$.
    Hence, in each clause exactly one $\bar\beta_{j}$ with $j\in\mathcal{S}_i$ is set to $1$ and so by modifying $\beta$ to $\bar\beta$ we indeed find a feasible solution that attains $N$. 
\end{proof}

\subsection{Proof of Theorem~\ref{thm:np_hardness_regret}}

\begin{proof}
    We prove Theorem~\ref{thm:np_hardness_regret} by showing that the NP-complete 1-in-3 SAT+ problem can be reduced to verifying whether $R(\theta) \geq \frac{2}{3}$ 
    for a WDRRO problem with model parameters chosen as follows:
    
    \begin{itemize}
        \item $\delta = 2$.
        \item $\Theta=\{\theta\in [0, 1]^d \, \vert \, \sum_{j\in\mathcal{S}_i}\theta^\top e_j = 1 \,\, \forall i\in[N]\}$. Here, $e_{j}$ denotes the $j$-th standard basis vector in $\mathbb{R}^d$. 
        The $i$-th constraint in the definition of $\Theta$ corresponds to the $i$-th clause.
        \item $\theta = \frac{1}{3}\vec{1}_d\in \mathbb{R}^d$. Clearly, for this vector it holds $\theta\in\Theta$. 
        \item $\Xi=\mathbb{R}^{3N}$. For $z\in\Xi$ we write $z_k$ for the $k$-th block of length $3$ and $(z_k)_i$ for the $i$-th entry of the $k$-th block.
        \item $\hat\xi_i=(-M \vec{1}_3, \ldots, -M\vec{1}_3, \vec{0}_3, -M \vec{1}_3, \ldots, -M\vec{1}_3)^\top\in\mathbb{R}^{3N}$ with $M =5N$. The $\vec{0}_3$ vector is the $i$-th 
        block of $\hat\xi_i$.
        \item $K=3N$
        \item For $k\in[N]$ define $a_{k, 1}=(\vec{0}_3, \ldots, \vec{0}_3, e_{1}, \vec{0}_3, \ldots, \vec{0}_3)\in\mathbb{R}^{3N}$, $a_{k, 2}=(\vec{0}_3, \ldots, \vec{0}_3, e_{2}, \vec{0}_3, \ldots, \vec{0}_3)\in\mathbb{R}^{3N}$, $a_{k,3}=(\vec{0}_3, \ldots, \vec{0}_3, e_{3}, \vec{0}_3, \ldots, \vec{0}_3)\in\mathbb{R}^{3N}$. Again, $e_{i}$ is the $i$-th standard basis vector in $\mathbb{R}^3$. The standard basis vectors $e_1$, $e_2$, and $e_3$ are located in the $k$-th block of dimension three of the vectors $a_{k, 1}$, $a_{k, 2}$, and $a_{k, 3}$.
        \item $b_{k, 1}=-e_{j^k_1}\in\mathbb{R}^d$, $b_{k, 2}=-e_{j^k_2}\in\mathbb{R}^d$, $b_{k, 3}=-e_{j^k_3}\in\mathbb{R}^d$ where $\mathcal{S}_k=\{j^k_1, j^k_2, j^k_3\}$, i.e., $b_{k, i}$ corresponds to the $i$-th variable appearing in the $k$-th clause of the 1-in-3 SAT+ problem.
        \item $c_{k, i}=0$ for all $k\in[N]$, $i\in[3]$. 
        \item $p=1$
        \item $\Vert\cdot\Vert = \Vert\cdot\Vert_1$. 
    \end{itemize}

    Plugging these choices into the dual reformulation of the regret from \eqref{eq:regret_data_driven} we obtain
    \begin{footnotesize}
    \begin{align}
        R(\theta) &= \sup_{\beta\in\Theta}\inf_{\lambda\geq 0}\left\{\lambda\delta + \frac{1}{N}\sum_{i=1}^N\sup_{z\in \Xi}\left\{\max_{\substack{k\in[N] \\ j\in[3]}}\left\{a_{k,j}^\top z + b_{k, j}^\top\theta\right\} - \max_{\substack{m\in[N] \\ l\in[3]}}\left\{a_{m,l}^\top z + b_{m, l}^\top\beta\right\} - \lambda \Vert z - \hat\xi_i\Vert_1\right\}\right\}\nonumber\\
        &= \sup_{\beta\in\Theta}\inf_{\lambda\geq 0}\left\{-\frac{1}{3} + \lambda\delta + \frac{1}{N}\sum_{i=1}^N\sup_{z\in \Xi}\left\{\max_{\substack{k\in[N] \\ j\in[3]}}(z_k)_j - \max_{\substack{m\in[N] \\ l\in[3]}}\left\{(z_m)_l - \beta_{j^m_l}\right\} - \lambda \Vert z_i\Vert_1 - \lambda\sum_{r\neq i}\Vert z_r + M\vec{1}_3\Vert_1\right\}\right\}\label{eq:regret_np_hardness_proof}.
    \end{align}
    \end{footnotesize}

    We now bound the regret from above and below. We begin with the bound from below. In the $i$-th summand of \eqref{eq:regret_np_hardness_proof} the supremum over $z\in\Xi$ can be 
    lower bounded by letting $z$ be the vector with $z_r = -M\vec{1}_3$ for $r\neq i$ and $z_i$ being the optimizer from Lemma~\ref{lemma:np_hardness_helper}. 
    This particular choice corresponds to switching off all components of $z$ except $z_i$ and results in the following lower bound:
    \begin{align}
        &\sup_{z\in \Xi}\left\{\max_{\substack{k\in[N] \\ j\in[3]}}(z_k)_j - \max_{\substack{m\in[N] \\ l\in[3]}}\left\{(z_m)_l - \beta_{j^m_l}\right\} - \lambda \Vert z_i\Vert_1 - \lambda\sum_{r\neq i}\Vert z_r + M\vec{1}_3\Vert_1\right\}\nonumber\\
        &\quad \geq \max_{j\in[3]}(z_i)_j - \max_{l\in[3]} \left\{(z_i)_l - \beta_{j^i_l}\right\} - \lambda \Vert z_i\Vert_1\label{eq:inequality_becoming_equality}\\
        &\quad = \max_{j\in[3]}\sup_{z\in\mathbb{R}^3}\min_{l\in[3]}\left\{z_j - z_l + \beta_{j^i_l}-\lambda\Vert z\Vert_1\right\}\nonumber\\
        &\quad = \max\left\{\min_{j\in\mathcal{S}_i}\beta_j, \lambda \min_{j\in\mathcal{S}_i}\beta_j + (1 - \lambda)\max_{j\in\mathcal{S}_i}\beta_j\right\}\nonumber
    \end{align}
    The first inequality holds by our specific choice of $z$. Indeed, by Lemma~\ref{lemma:np_hardness_helper} we have $z_i\in[-1, 1]^3$ and $z_r=-M\vec{1}_3$ for $r\neq i$. 
    By our choice of $M$ we thus have $(z_i)_j\geq (z_r)_l + 1$ which together with $\beta_k\in[0, 1]$ shows that the $i$-th term always wins in the maxima. 
    The following equality results from rearranging terms and using the fact that $z_i$ was chosen to be the optimizer of problem \eqref{eq:formula_helper_lemma} 
    from Lemma~\ref{lemma:np_hardness_helper}. The final equality is Lemma~\ref{lemma:np_hardness_helper}. Plugging this bound back into \eqref{eq:regret_np_hardness_proof} 
    yields the following lower bound on the regret:
    \begin{align}
        R(\theta) &\geq \sup_{\beta\in\Theta}\inf_{\lambda\geq 0}\left\{-\frac{1}{3} + \lambda\delta + \frac{1}{N}\sum_{i=1}^N\max\left\{\min_{j\in\mathcal{S}_i}\beta_j, \lambda \min_{j\in\mathcal{S}_i}\beta_j + (1 - \lambda)\max_{j\in\mathcal{S}_i}\beta_j\right\}\right\}\nonumber\\
        &= -\frac{1}{3} + \sup_{\beta\in\Theta}\frac{1}{N}\sum_{i=1}^N\max_{j\in\mathcal{S}_i}\beta_j\label{eq:lower_bound_np_hardness}
    \end{align}
    The equality here holds as the infimum over $\lambda$ is attained at $\lambda=0$. This is because by $\delta = 2$ and $\max_{j\in\mathcal{S}_i}\beta_j - \min_{j\in\mathcal{S}_i}\beta_j\in[0, 1]$ the slope of the inner expression as a function of $\lambda$ is always positive and so the infimum is attained at $\lambda=0$. 

    We now turn to proving an upper bound on the regret. To do so we first show that, for every $\lambda>2/M$, the inequality in \eqref{eq:inequality_becoming_equality} is in fact tight. 
    Fix $i\in[N]$, $\beta\in\Theta$, and $\lambda>2/M$, and denote the left-hand side of \eqref{eq:inequality_becoming_equality} by $\Gamma_i(\beta,\lambda)$. Let 
    $\mathcal Z_i(\lambda):=\{z\in\Xi: z_i\in[-1/\lambda,1/\lambda]^3,\ z_r\in[-M-1/\lambda,-M+1/\lambda]^3\ \forall r\neq i\}$. Then
    \begin{align*}
    \Gamma_i(\beta,\lambda)
    &=
    \sup_{z\in\mathcal Z_i(\lambda)}
    \left\{
    \max_{\substack{k\in[N]\\ j\in[3]}}(z_k)_j
    -
    \max_{\substack{m\in[N]\\ l\in[3]}}
    \{(z_m)_l-\beta_{j_l^m}\}
    -\lambda\|z_i\|_1
    -\lambda\sum_{r\neq i}\|z_r+M\vec 1_3\|_1
    \right\} \\
    &=
    \sup_{z\in\mathcal Z_i(\lambda)}
    \left\{
    \max_{j\in[3]}(z_i)_j
    -
    \max_{\substack{m\in[N]\\ l\in[3]}}
    \{(z_m)_l-\beta_{j_l^m}\}
    -\lambda\|z_i\|_1
    -\lambda\sum_{r\neq i}\|z_r+M\vec 1_3\|_1
    \right\} \\
    &\le
    \sup_{z\in\mathcal Z_i(\lambda)}
    \left\{
    \max_{j\in[3]}(z_i)_j
    -
    \max_{l\in[3]}\{(z_i)_l-\beta_{j_l^i}\}
    -\lambda\|z_i\|_1
    -\lambda\sum_{r\neq i}\|z_r+M\vec 1_3\|_1
    \right\} \\
    &\le
    \sup_{u\in[-1/\lambda,1/\lambda]^3}
    \left\{
    \max_{j\in[3]}u_j
    -
    \max_{l\in[3]}\{u_l-\beta_{j_l^i}\}
    -\lambda\|u\|_1
    \right\} \\
    &=
    \max_{j\in[3]}
    \sup_{u\in[-1/\lambda,1/\lambda]^3}
    \min_{l\in[3]}
    \left\{
    u_j-u_l+\beta_{j_l^i}-\lambda\|u\|_1
    \right\} \\
    &=
    \max_{j\in[3]}
    \sup_{u\in\mathbb R^3}
    \min_{l\in[3]}
    \left\{
    u_j-u_l+\beta_{j_l^i}-\lambda\|u\|_1
    \right\} \\
    &=
    \max\left\{
    \min_{j\in\mathcal S_i}\beta_j,\,
    \lambda\min_{j\in\mathcal S_i}\beta_j
    +
    (1-\lambda)\max_{j\in\mathcal S_i}\beta_j
    \right\}.
    \end{align*}
    The first equality follows because $\beta\in[0,1]^d$ implies that the difference
    of the two maxima in the left-hand side \eqref{eq:inequality_becoming_equality} always lies in $[0,1]$. Hence, if
    $z\notin\mathcal Z_i(\lambda)$, the transportation penalty is strictly larger
    than $1$, so the objective value is negative; such points cannot improve the
    supremum, since the lower-bound construction in \eqref{eq:inequality_becoming_equality} shows that the supremum must be nonnegative. The
    second equality uses $\lambda>2/M$, which makes
    $[-M-1/\lambda,-M+1/\lambda]$ and $[-1/\lambda,1/\lambda]$ disjoint, so the
    $i$-th block wins the first maximum. For the second maximum we use
    $\max_{m\in[N],l\in[3]}\{(z_m)_l-\beta_{j_l^m}\}\ge\max_{l\in[3]}\{(z_i)_l-\beta_{j_l^i}\}$, which gives the first
    inequality. The second inequality drops the nonpositive penalty terms involving
    $r\neq i$ and renames $z_i$ as $u$. The next equality rewrites the difference of
    maxima as a max-min expression. The restriction on $u$ can then be removed
    because Lemma~\ref{lemma:np_hardness_helper} admits an optimizer in
    $[-1/\lambda,1/\lambda]^3$: if $\lambda\ge1$, the optimizer is $u=0$, and if
    $\lambda<1$, the lemma gives an optimizer in $[-1,1]^3\subseteq
    [-1/\lambda,1/\lambda]^3$. The final equality is
    Lemma~\ref{lemma:np_hardness_helper}. Since the lower-bound construction already
    shows the reverse inequality, equality in \eqref{eq:inequality_becoming_equality} follows.

    This allows us to obtain the following upper bound on the regret:
    \begin{small}
    \begin{align}
        R(\theta)&\leq \sup_{\beta\in\Theta}\inf_{\lambda> \frac{2}{M}}\left\{-\frac{1}{3} + \lambda\delta + \frac{1}{N}\sum_{i=1}^N\sup_{z\in \Xi}\left\{\max_{\substack{k\in[N] \\ j\in[3]}}(z_k)_j - \max_{\substack{m\in[N] \\ l\in[3]}}\left\{(z_m)_l - \beta_{j^m_l}\right\} - \lambda \Vert z_i\Vert_1 - \lambda\sum_{r\neq i}\Vert z_r + M\vec{1}_3\Vert_1\right\}\right\}\nonumber\\
        &=\sup_{\beta\in\Theta}\inf_{\lambda> \frac{2}{M}}\left\{-\frac{1}{3} + \lambda\delta + \frac{1}{N}\sum_{i=1}^N\max\left\{\min_{j\in\mathcal{S}_i}\beta_j, \lambda \min_{j\in\mathcal{S}_i}\beta_j + (1 - \lambda)\max_{j\in\mathcal{S}_i}\beta_j\right\}\right\}\nonumber\\
        &= -\frac{1}{3} + \frac{2\delta}{M} + \sup_{\beta\in\Theta}\frac{1}{N}\sum_{i=1}^N\max\left\{\min_{j\in\mathcal{S}_i}\beta_j, \frac{2}{M} \min_{j\in\mathcal{S}_i}\beta_j + \left(1 - \frac{2}{M}\right)\max_{j\in\mathcal{S}_i}\beta_j\right\}\nonumber\\
        &\leq -\frac{1}{3} + \frac{2\delta}{M} + \sup_{\beta\in\Theta}\frac{1}{N}\sum_{i=1}^N\max_{j\in\mathcal{S}_i}\beta_j\label{eq:upper_bound_np_hardness}
    \end{align}
    \end{small}
    The first inequality results from the restriction of the minimization interval for $\lambda$ from $[0, \infty)$ to $(\frac{2}{M}, \infty)$. 
    The first equality is $\Gamma_i(\beta, \lambda)=\max\left\{
    \min_{j\in\mathcal S_i}\beta_j,\,
    \lambda\min_{j\in\mathcal S_i}\beta_j
    +
    (1-\lambda)\max_{j\in\mathcal S_i}\beta_j
    \right\}$ which holds for $\lambda > \frac{2}{M}$. Then we again argue that because of $\delta + \min_{j\in\mathcal{S}_i}\beta_j  - \max_{j\in\mathcal{S}_i}\beta_j\geq \delta - 1 >0$ 
    the piecewise slopes of the inner expression as a function of $\lambda$ are positive and so the infimum equals the limit as $\lambda\downarrow\frac{2}{M}$. 
    Finally, we upper bound 
    $\max\left\{\min_{j\in\mathcal{S}_i}\beta_j, \frac{2}{M}\min_{j\in\mathcal{S}_i}\beta_j + \left(1 - \frac{2}{M}\right)\max_{j\in\mathcal{S}_i}\beta_j\right\}\leq \max_{j\in\mathcal{S}_i}\beta_j$.

    Combining the lower bound \eqref{eq:lower_bound_np_hardness} and the upper bound \eqref{eq:upper_bound_np_hardness} we obtain
    \begin{align*}
        -\frac{1}{3} + \sup_{\beta\in\Theta}\frac{1}{N}\sum_{i=1}^N\max_{j\in\mathcal{S}_i}\beta_j\leq R(\theta)\leq -\frac{1}{3} + \frac{2\delta}{M} + \sup_{\beta\in\Theta}\frac{1}{N}\sum_{i=1}^N\max_{j\in\mathcal{S}_i}\beta_j.
    \end{align*}

    By Lemma~\ref{lemma:opt_prob_3sat}, we know that $\sup_{\beta\in\Theta}\frac{1}{N}\sum_{i=1}^N \max_{j\in\mathcal{S}_i}\beta_j=1$ if $W$ is 
    satisfiable and otherwise it holds $\sup_{\beta\in\Theta}\frac{1}{N}\sum_{i=1}^N \max_{j\in\mathcal{S}_i}\beta_j\leq 1 - \frac{1}{N}$. Hence, if $W$ 
    is satisfiable, then $R(\theta)\in[\frac{2}{3}, \frac{2}{3} + \frac{2\delta}{M}]$. On the other hand, if $W$ is not satisfiable, 
    then $R(\theta)\leq\frac{2}{3} - \frac{1}{N} + \frac{2\delta}{M} < \frac{2}{3}$ where the last inequality follows from $M > 2\delta N$ for our choice of $M$ and $\delta$. 
    This shows that $R(\theta)\geq \frac{2}{3}$ if and only if $W$ is satisfiable and hence concludes the proof.
\end{proof}

\section{Proofs of Section~\ref{sec:algorithms}}
\label{app:algs_proofs}

\subsection{Proof of Lemma~\ref{lemma:relaxation_full}}

\begin{proof}
We carry out the proof only for the case $p>1$ as the other case is similar. Recall that the constraint $\hat\xi_i +\frac{q_{ik}}{\gamma_{ik}}\in\Xi$ in \eqref{eq:regret_opt_problem} for $\gamma_{ik}=0$ is interpreted as $q_{ik}\in\mathrm{rec}(\Xi)$. By the definition of the extended perspective function this constraint can be written as $\gamma_{ik}\chi_\Xi\left(\hat\xi_i + \frac{q_{ik}}{\gamma_{ik}}\right)\leq 0$ and similarly for the constraint $\frac{z_{ik}}{\gamma_{ik}}\in\Theta$. Instead of having these two constraints we can subtract $\gamma_{ik}\chi_\Xi\left(\hat\xi_i +\frac{q_{ik}}{\gamma_{ik}}\right)$ and $\gamma_{ik}\chi_\Theta\left(\frac{z_{ik}}{\gamma_{ik}}\right)$ from the objective. Since $\Xi$ and $\Theta$ are assumed to be closed convex sets, these two functions are proper, lower-semicontinuous, and convex. Similarly, we can drop the constraint $\beta\in\Theta$ and instead subtract $\chi_\Theta(\beta)$ from the objective. Dualizing the remaining constraints, the program \eqref{eq:regret_opt_problem} can therefore be rewritten as

\begin{align}
    \sup_{\substack{\beta, \gamma_{ik} \geq 0, q_{ik} \\ z_{ik}, t_{ik}}} \quad 
    &\frac{1}{N} \sum_{i=1}^N \sum_{k=1}^K 
    \Biggl\{ \gamma_{ik} \Bigl( a_k^\top \hat{\xi}_i + b_k^\top\theta + c_k\Bigr) 
    + a_k^\top q_{ik} - t_{ik} 
    - \gamma_{ik}\chi_{\Xi} \left(\hat{\xi}_i + \frac{q_{ik}}{\gamma_{ik}}\right) 
    - \gamma_{ik}\chi_{\Theta} \left(\frac{z_{ik}}{\gamma_{ik}}\right) \nonumber\\
    &\quad \quad \quad \quad - \chi_{\Theta}(\beta) + \inf_{\mu_{ik} \geq 0} \mu_{ik}^\top 
    \Bigl( t_{ik} \vec{1}_K - \gamma_{ik} (A\hat{\xi}_i + c) - Bz_{ik} - Aq_{ik} \Bigr) \Biggr\} \label{eq:original_prob}\\
    &+ \frac{1}{N} \sum_{i=1}^N 
    \Biggl\{ \inf_{s_i} s_i \Bigl(1 - \sum_{k=1}^K \gamma_{ik} \Bigr) 
    + \inf_{\tau_i} \tau_i^\top \Bigl(\beta - \sum_{k=1}^K z_{ik} \Bigr) \Biggr\} \nonumber\\
    &+ \inf_{\lambda \geq 0} \lambda 
    \Biggl( \delta^p - \frac{1}{N} \sum_{i=1}^N \sum_{k=1}^K \gamma_{ik} 
    \Bigl\| \frac{q_{ik}}{\gamma_{ik}} \Bigr\|^p \Biggr).\nonumber
\end{align}

Taking the Lagrangian dual we obtain after some calculations

\begin{align}
    \inf_{\substack{\lambda\geq 0,\, s_i, \\  \mu_{ik} \geq 0\, \tau_i}} &\lambda
    \delta^p + \frac{1}{N}\sum_{i=1}^N s_i + \sigma_{\Theta} \left(\frac{1}{N} \sum_{i=1}^N \tau_i \right)\nonumber\\
    &+ \frac{1}{N} \sum_{i=1}^N \sum_{k=1}^K \sup_{\gamma_{ik} \geq 0} 
    \Biggl\{ \gamma_{ik} \Bigl( a_k^\top \hat{\xi}_i + b_k^\top \theta + c_k - 
    \mu_{ik}^\top (A\hat{\xi}_i + c) - s_i \Bigr) \nonumber\\
    &\quad\quad\quad\quad\quad\quad\quad\quad\quad + \sup_{z_{ik}}\left\{-z_{ik}^\top (B^\top \mu_{ik} + \tau_i) - \gamma_{ik}\chi_{\Theta} \left(\frac{z_{ik}}{\gamma_{ik}}\right)\right\}\label{eq:lagrange_dual}\\
    &\quad\quad\quad\quad\quad\quad\quad\quad\quad + \sup_{q_{ik}} \left\{(a_k - A^\top \mu_{ik})^\top q_{ik} 
    - \lambda \gamma_{ik} \left\Vert\frac{q_{ik}}{\gamma_{ik}}\right\Vert^p 
    - \gamma_{ik}\chi_{\Xi} \left(\hat{\xi}_i + \frac{q_{ik}}{\gamma_{ik}}\right) \right\}\Biggr\},\nonumber\\
    \text{s.t.} \quad & \mu_{ik}^\top \vec{1}_K = 1, \quad \forall i \in [N],\, k \in [K].\nonumber
\end{align}

Strong duality can be justified as follows: The primal problem in \eqref{eq:original_prob} is an optimization problem with constraints $\sum_{k=1}^K\gamma_{ik} = 1$, $\sum_{k=1}^Kz_{ik}=\beta$, and $\frac{1}{N}\sum_{i=1}^N\sum_{k=1}^K\gamma_{ik}\left\Vert\frac{q_{ik}}{\gamma_{ik}}\right\Vert^p\leq\delta^p$, objective $\gamma_{ik} \Bigl( a_k^\top \hat{\xi}_i + b_k^\top\theta + c_k\Bigr) + a_k^\top q_{ik} - t_{ik}$ and domain of the objective given by
\begin{align*}
    C = \Theta\times\prod_{\substack{i\in[N]\\k\in[K]}}C_{ik}, \quad C_{ik} = \left\{(\gamma_{ik}, q_{ik}, z_{ik})\Big\vert \hat\xi_i + \frac{q_{ik}}{\gamma_{ik}}\in\Xi, \, \frac{z_{ik}}{\gamma_{ik}}\in\Theta\right\}
\end{align*}
with the usual definition that for $\gamma_{ik}=0$ the constraints require $q_{ik}\in\mathrm{rec}(\Xi)$ and $z_{ik}\in\mathrm{rec}(\Theta)$. By \cite[Proposition 5.3.6]{bertsekas2009convex} strong duality holds if one can find a point $(\beta, \gamma_{ik}, q_{ik}, z_{ik}, t_{ik})_{i\in[N], k\in[K]}$ such that $(\beta, \gamma_{ik}, q_{ik}, z_{ik})_{i\in[N], k\in[K]}\in\mathrm{ri}(C)$ and $\frac{1}{N}\sum_{i=1}^N\sum_{k=1}^K\gamma_{ik}\left\Vert\frac{q_{ik}}{\gamma_{ik}}\right\Vert^p<\delta^p$. However, it holds $\mathrm{ri}(C) = \mathrm{ri}(\Theta)\times\prod_{\substack{i\in[N],\\k\in[K]}}\mathrm{ri}(C_{ik})$
 and so it remains to compute the relative interiors of $C_{ik}$. For this, note that $C_{ik}$ can be written as

\begin{small}
 \begin{align*}
    C_{ik} = \mathrm{cone}(\{1\}\times (\Xi - \hat\xi_i)\times \Theta)\cup (\{0\}\times\mathrm{rec}(\Xi - \hat\xi_i)\times \mathrm{rec}(\Theta)) = \mathrm{cl}(\mathrm{cone}(\{1\}\times (\Xi - \hat\xi_i)\times \Theta))
\end{align*}
\end{small}

where the last equality holds by \cite[Theorem 8.2]{Rockafellar1970} since $\Xi$ and $\Theta$ are closed. From this and Theorem 6.3 and Corollary 6.8.1 in \cite{Rockafellar1970} we obtain
\begin{align*}
    \mathrm{ri}(C_{ik}) &= \mathrm{ri}(\mathrm{cl}(\mathrm{cone}(\{1\}\times (\Xi - \hat\xi_i)\times \Theta))) = \mathrm{ri}(\mathrm{cone}(\{1\}\times (\Xi - \hat\xi_i)\times \Theta)) \\
    &= \left\{(\gamma_{ik}, q_{ik}, z_{ik})\Big\vert \gamma_{ik} > 0,\, \hat\xi_i + \frac{q_{ik}}{\gamma_{ik}}\in\mathrm{ri}(\Xi), \, \frac{z_{ik}}{\gamma_{ik}}\in\mathrm{ri}(\Theta)\right\}
\end{align*}
We can now find a point that satisfies the desired conditions: Pick any $\beta\in\mathrm{ri}(\Theta)$ and set $\gamma_{ik} = \frac{1}{K}$ and $z_{ik} = \gamma_{ik}\beta$ which implies $\frac{z_{ik}}{\gamma_{ik}}\in\mathrm{ri}(\Theta)$. Furthermore, pick a point $\bar\xi$ from $\mathrm{ri}(\Xi)$ and let $q_{ik} = \varepsilon(\bar\xi - \hat{\xi}_i)$. For $\varepsilon$ small enough this ensures $\hat\xi_i + \frac{q_{ik}}{\gamma_{ik}}\in \mathrm{ri}(\Xi)$ and also $\frac{1}{N} \sum_{i=1}^N \sum_{k=1}^K \gamma_{ik} \Bigl\| \frac{q_{ik}}{\gamma_{ik}} \Bigr\|^p< \delta^p$. Here we assume for now that $\delta > 0$ (we deal with the case $\delta=0$ later). Finally, we also pick $t_{ik}$ large enough so that the epigraphical constraint also holds strictly. Then by \cite[Section 5.2.3]{boyd_convex_2023} strong duality holds.

Using our conventions for the definition of perspective functions at $\gamma_{ik}=0$ as well as the convention that $0\cdot\infty=\infty$ we can explicitly solve for the supremum in $z_{ik}$:
\begin{align}
    \sup_{z_{ik}} -z_{ik}^\top (B^\top \mu_{ik} + \tau_i) - \gamma_{ik}\chi_{\Theta} \left(\frac{z_{ik}}{\gamma_{ik}}\right) = \gamma_{ik} \sigma_\Theta(-(B^\top\mu_{ik} + \tau_i))\label{eq:sup_z_constr}.
\end{align}
Now we optimize over $q_{ik}$. If $\gamma_{ik} > 0$ the substitution $\xi_{ik} = \hat{\xi}_i + \frac{q_{ik}}{\gamma_{ik}}$ gives
\begin{align}
    &\sup_{q_{ik}}(a_k - A^\top \mu_{ik})^\top q_{ik} 
    - \lambda \gamma_{ik} \left\Vert\frac{q_{ik}}{\gamma_{ik}}\right\Vert^p 
    - \gamma_{ik}\chi_{\Xi} \left(\hat{\xi}_i + \frac{q_{ik}}{\gamma_{ik}}\right)\nonumber\\
    &\quad=\gamma_{ik}\cdot \sup_{\xi_{ik}}(a_k - A^\top \mu_{ik})^\top (\xi_{ik} - \hat\xi_i) 
    - \lambda \left\Vert\hat\xi_i - \xi_{ik}\right\Vert^p 
    - \chi_{\Xi} \left(\xi_{ik}\right)\label{eq:sup_q_constr0}\\
    &\quad= -\gamma_{ik}(a_k - A^\top \mu_{ik})^\top \hat\xi_i + \gamma_{ik} (f_1 + f_2)^*(a_k - A^\top\mu_{ik})\label{eq:sup_q_constr1}.
\end{align}
where $f_1(\xi) = \lambda \Vert \hat\xi_i-\xi\Vert^p$ and $f_2(\xi) = \chi_\Xi(\xi)$. The same identity continues to hold at $\gamma_{ik}=0$ under the extended-perspective convention used throughout. Since $\mathrm{ri}(\Xi)\neq\emptyset$ and $\mathrm{dom}(\lambda\Vert \hat\xi_i - \cdot\Vert^p)=\mathbb{R}^n$ by infimal convolution (see \cite[Theorem 11.23 (a)]{rockafellar_variational_1998}) one obtains
\begin{align*}
    (f_1 + f_2)^*(y) = \inf_{u}\,  f_1^*(u) + f_2^*(y - u)
\end{align*}
with the infimum $u$ being attained. The respective Fenchel conjugates are easily computed as
\begin{align*}
    f_1^*(y) = y^\top\hat{\xi}_i + \phi(q) \lambda \left\Vert \frac{y}{\lambda} \right\Vert_*^q, \quad f_2^*(y) = \sigma_\Xi(y)
\end{align*}
where $q=\frac{p}{p-1}$ and $\phi(q) = \frac{(q-1)^{q-1}}{q^q}$. Note that the formula for $f_1^*$ is also correct for $\lambda=0$. This then implies that
\begin{align}\label{eq:sup_q_constr2}
    &(f_1 + f_2)^*(a_k - A^\top\mu_{ik}) \nonumber\\
    &\quad= \inf_{u_{ik}} \sigma_\Xi(u_{ik}) + (a_k - A^\top\mu_{ik} - u_{ik})^\top\hat\xi_i  + \phi(q) \lambda \left\Vert \frac{a_k - A^\top\mu_{ik} - u_{ik}}{\lambda} \right\Vert_*^q.
\end{align}
Plugging \eqref{eq:sup_z_constr}, \eqref{eq:sup_q_constr1}, and \eqref{eq:sup_q_constr2} back into \eqref{eq:lagrange_dual} gives
\begin{align*}
    \inf_{\substack{\lambda\geq 0,\, s_i, \\ \mu_{ik} \geq 0\,\tau_i}} &\lambda
    \delta^p + \frac{1}{N}\sum_{i=1}^N s_i + \sigma_{\Theta} \left(\frac{1}{N} \sum_{i=1}^N \tau_i \right)\\
    &+ \frac{1}{N} \sum_{i=1}^N \sum_{k=1}^K \sup_{\gamma_{ik} \geq 0} 
    \Biggl\{ \gamma_{ik} \Bigl( a_k^\top \hat{\xi}_i + b_k^\top \theta + c_k - 
    \mu_{ik}^\top (A\hat{\xi}_i + c) + \sigma_\Theta(-(B^\top \mu_{ik} + \tau_i))\\
    &\quad\quad\quad\quad\quad\quad\quad\quad\quad  + \inf_{u_{ik}}\left\{\sigma_\Xi(u_{ik}) - u_{ik}^\top\hat\xi_i + \phi(q) \lambda \left\Vert \frac{a_k - A^\top\mu_{ik} - u_{ik}}{\lambda} \right\Vert_*^q\right\} - s_i \Bigr) \Biggr\},\\
    \text{s.t.} \quad & \mu_{ik}^\top \vec{1}_K = 1, \quad \forall i \in [N],\, k \in [K].
\end{align*}

Now using the fact the infimum in \eqref{eq:sup_q_constr2} is attained one realizes that
\begin{align*}
    \sup_{\gamma_{ik} \geq 0} 
    \Biggl\{ \gamma_{ik} \Bigl( &a_k^\top \hat{\xi}_i + b_k^\top \theta + c_k - 
    \mu_{ik}^\top (A\hat{\xi}_i + c) + \sigma_\Theta(-(B^\top \mu_{ik} + \tau_i))\\
    &+ \inf_{u_{ik}}\left\{\sigma_\Xi(u_{ik}) - u_{ik}^\top\hat\xi_i + \phi(q) \lambda \left\Vert \frac{a_k - A^\top\mu_{ik} - u_{ik}}{\lambda} \right\Vert_*^q\right\} - s_i \Bigr) \Biggr\}
\end{align*}
is zero if there exists $u_{ik}$ such that
\begin{align*}
    s_i\geq \, &a_k^\top \hat{\xi}_i + b_k^\top \theta + c_k - 
    \mu_{ik}^\top (A\hat{\xi}_i + c) + \sigma_\Theta(-(B^\top \mu_{ik} + \tau_i))\\
    &+ \sigma_\Xi(u_{ik}) - u_{ik}^\top\hat\xi_i + \phi(q) \lambda \left\Vert \frac{a_k - A^\top\mu_{ik} - u_{ik}}{\lambda} \right\Vert_*^q
\end{align*}
and infinity otherwise. Plugging this back into \eqref{eq:lagrange_dual} one obtains the dual stated in the Lemma.

Finally, when $\delta=0$ by a tedious, but straightforward computation one checks that both \eqref{eq:regret_opt_problem} and \eqref{eq:full_relaxation} collapse to 
\begin{align*}
    \sup_{\beta\in\Theta}\frac{1}{N}\sum_{i=1}^N
    \left[
        \max_{k\in[K]}\left(a_k^\top\hat\xi_i + b_k^\top\theta + c_k\right)
        - \max_{m\in[K]}\left(a_m^\top\hat\xi_i + b_m^\top\beta + c_m\right)
    \right]
\end{align*}
and so $\beta$ will be equal to an ERM optimizer in both \eqref{eq:regret_opt_problem} and \eqref{eq:full_relaxation}.
\end{proof}

\subsection{Proof of Corollary~\ref{corr:relaxation_full_polyhedral}}

\begin{proof}
    For $\Xi=\{x\mid Px\leq r\}$, the support function satisfies $\sigma_{\Xi}(u)=\inf\{\zeta^\top r\mid \zeta\geq0,\; P^\top\zeta=u\}$.
    Similarly, for $\Theta=\{\theta\mid M\theta\leq w\}$ it holds $\sigma_{\Theta}(y)=\inf\{\alpha^\top w\mid \alpha\geq0,\; M^\top\alpha=y\}$.
    Applying these identities to the three support functions in \eqref{eq:full_relaxation} gives the variables
    $\eta$, $\zeta_{ik}$, and $\nu_{ik}$. The outer support term becomes
    $w^\top\eta$ with $M^\top\eta=\frac{1}{N}\sum_i\tau_i$. The term
    $\sigma_\Theta(-(B^\top\mu_{ik}+\tau_i))$ becomes $w^\top\nu_{ik}$ with
    $M^\top\nu_{ik}=-(B^\top\mu_{ik}+\tau_i)$. Finally, setting
    $u_{ik}=P^\top\zeta_{ik}$ rewrites
    $\sigma_\Xi(u_{ik})-u_{ik}^\top\hat\xi_i$ as $\zeta_{ik}^\top(r-P\hat\xi_i)$ and yields the stated
    norm term.
\end{proof}

\subsection{Proof of Theorem~\ref{thm:relax_bound}}

\begin{proof}
By the duality of Lemma~\ref{lemma:relaxation_full}, it is enough to show that 
for fixed $\theta$ the optimal value of \eqref{eq:regret_opt_problem} is upper bounded by the ex-post regret for that $\theta$. 
Dropping the constraints $\sum_{k=1}^K z_{ik}=\beta$ for all $i\in[N]$ in \eqref{eq:regret_opt_problem} only enlarges the feasible set.
However, after setting $\beta_{ik}=z_{ik}/\gamma_{ik}$, by \cite[Theorem~9]{kuhn_wasserstein_2024} this new upper bound is exactly the finite bidual
of the DRO problem applied to the ex-post regret loss $\xi\mapsto \ell(\theta,\xi)-\inf_{\beta\in\Theta}\ell(\beta,\xi)$. 
The enlarged problem therefore has value $R^{\mathrm{post}}(\theta)$, which proves the claim.
\end{proof}

\subsection{Proof of Theorem~\ref{thm:exact_relaxation}}
\begin{proof}
We begin with the case $\delta=0$. By Lemma~\ref{lemma:relaxation_full}, it suffices to show that the exact bidual
\eqref{eq:kuhn_dual_applied} and the relaxed bidual \eqref{eq:regret_opt_problem} have the same optimal value.
We do this by explicitly constructing for any fixed $\beta$ an optimal $(\gamma_{ik}, q_{ik}, z_{ik}, t_{ik})$
in \eqref{eq:regret_opt_problem} and then verifying that $z_{ik} = \gamma_{ik}\beta$ for all $i, k$. 
First, when $\delta=0$ the budget constraint in \eqref{eq:regret_opt_problem} implies $q_{ik}=0$ for all $i, k$. The objective then 
splits up as $\frac{1}{N}\sum_{i\in[N]}(S_{i, 1} + S_{i,2})$ where $S_{i,1}=\sum_{k\in[K]}\gamma_{ik}(a_k^\top\hat\xi_i + b_k^\top \theta + c_k)$ 
and $S_{i,2} = -\sum_{k\in[K]}t_{ik}$. We upper bound each of these terms. For $S_{i,1}$ we have using the constraints on $\gamma_{ik}$
\begin{align*}
    S_{i,1} = \sum_{k\in[K]}\gamma_{ik}(a_k^\top\hat\xi_i + b_k^\top \theta + c_k) \leq \max_{k}(a_k^\top\hat\xi_i + b_k^\top \theta + c_k) 
\end{align*}
while for $S_{i,2}$ we obtain from the constraints on $\gamma_{ik}$ and $z_{ik}$ and the optimality of $t_{ik}$ that
\begin{align*}
    S_{i, 2} &= -\sum_{k\in[K]}t_{ik} \leq -\sum_{k\in[K]}\max_m(\gamma_{ik}(a_m^\top\hat\xi_i + c_m) + b_m^\top z_{ik})\\
    &\leq -\max_m\left\{\sum_{k\in[K]}\gamma_{ik}(a_m^\top\hat\xi_i + c_m) + \sum_{k\in[K]}b_m^\top z_{ik}\right\}
    = -\max_m(a_m^\top \hat\xi_i + b_m^\top \beta + c_m).
\end{align*}
Equality in both bounds is attained for the choice $\gamma_{ik_i^*} = 1$, $z_{ik_i^*} = \beta$, 
$t_{ik_i^*} = \max_{m}(a_m^\top\hat\xi_i + b_m^\top \beta + c_m)$ for $k_i^*=\argmax_k (a_k^\top\hat\xi_i + b_k^\top \theta + c_k)$ (with deterministic
tiebreaker in the case of ties) and $\gamma_{ik} = 0$, $z_{ik} = 0$, $t_{ik}= 0$ otherwise.
This guarantees $z_{ik} = \gamma_{ik}\beta$ at the optimum and so the relaxation in \eqref{eq:regret_opt_problem} is exact. 

For $\delta \ge D(\Xi)$, the Wasserstein ball $B^p_\delta(\mathbb P_0)$ contains every probability measure supported on $\Xi$. Hence, for any measurable function $f$,
\[
\sup_{\mathbb P\in B_\delta^p(\mathbb P_0)} \mathbb E_{\mathbb P}[f(X)]
= \sup_{\xi\in\Xi} f(\xi).
\]
Applying this identity to the definitions of ex-ante and ex-post regret yields
\[
\begin{aligned}
R^{\mathrm{ante}}(\theta)
&= \sup_{\beta\in\Theta}\sup_{\mathbb P\in B_\delta^p(\mathbb P_0)}
\mathbb E_{\mathbb P}[\ell(\theta,X)-\ell(\beta,X)] = \sup_{\beta\in\Theta}\sup_{\xi\in\Xi}
\big(\ell(\theta,\xi)-\ell(\beta,\xi)\big) \\
&= \sup_{\xi\in\Xi}
\Big(\ell(\theta,\xi)-\inf_{\beta\in\Theta}\ell(\beta,\xi)\Big) = \sup_{\mathbb P\in B_\delta^p(\mathbb P_0)}
\mathbb{E}_\mathbb{P}\left[\ell(\theta,X)-\inf_{\beta\in\Theta}\ell(\beta,X)\right]\\
&= R^{\mathrm{post}}(\theta).
\end{aligned}
\]
Since the relaxation is bounded above by $R^{\mathrm{post}}(\theta)$ by Theorem~\ref{thm:relax_bound} and bounded below by
$R^{\mathrm{ante}}(\theta)$ by construction, it is exact whenever $\delta \ge D(\Xi)$.
\end{proof}

\subsection{Proof of Theorem~\ref{thm:relax_gap_delta}}

\begin{proof}
Since $d_k=0$ here, $L_\xi=\max_k\|a_k\|_*$ makes $\xi\mapsto\ell(\beta,\xi)$ $L_\xi$-Lipschitz uniformly in
$\beta$. The lower bound $R(\theta)\le\widehat R(\theta)$ follows by setting $z_{ik}=\gamma_{ik}\beta$ in the relaxation,
so it remains to show $\widehat R(\theta)\le R(\theta)+2L_\xi\delta$.
Take any feasible point $(\beta,\gamma,q,z,t)$ of \eqref{eq:regret_opt_problem}. Compactness and the perspective
convention imply that $\gamma_{ik}=0$ forces $q_{ik}=z_{ik}=0$. For $\gamma_{ik}>0$, set
$\xi_{ik}=\hat\xi_i+q_{ik}/\gamma_{ik}$ and $\beta_{ik}=z_{ik}/\gamma_{ik}$, and choose arbitrary values when
$\gamma_{ik}=0$. Then $\sum_k\gamma_{ik}\beta_{ik}=\sum_k z_{ik}=\beta$ for every $i$.

Construct an exact feasible point for \eqref{eq:kuhn_dual_applied} with the same $\beta,\gamma,q$ and
$\bar t_{ik}=\gamma_{ik}\ell(\beta,\xi_{ik})$, which is feasible by the definition of $\ell$ and the perspective convention.
Let $J_{\rm rel}$ and $J_{\rm ex}$ be the relaxed objective value and the constructed exact objective value. Then one can bound
\begin{align*}
    J_{\rm rel} &= J_{\rm ex} + \frac1N\sum_{i=1}^N\sum_{k=1}^K (\bar t_{ik} - t_{ik})\\
    &\le J_{\rm ex}
    +\frac1N\sum_{i=1}^N\sum_{k=1}^K
    \gamma_{ik}\bigl(\ell(\beta,\xi_{ik})-\ell(\beta_{ik},\xi_{ik})\bigr)\\
    &= J_{\rm ex}+\frac1N\sum_{i=1}^N\Bigg[
    \sum_{k=1}^K\gamma_{ik}\bigl(\ell(\beta,\xi_{ik})-\ell(\beta,\hat\xi_i)\bigr)\\
    &\quad+\ell(\beta,\hat\xi_i)-\sum_{k=1}^K\gamma_{ik}\ell(\beta_{ik},\hat\xi_i)
    +\sum_{k=1}^K\gamma_{ik}\bigl(\ell(\beta_{ik},\hat\xi_i)-\ell(\beta_{ik},\xi_{ik})\bigr)
    \Bigg]\\
    &\leq J_{\rm ex}
    +\frac{2L_\xi}{N}\sum_{i=1}^N\sum_{k=1}^K
    \gamma_{ik}\left\|\frac{q_{ik}}{\gamma_{ik}}\right\|\\
    &\leq J_{\rm ex}+2L_\xi\left(\frac1N\sum_{i=1}^N\sum_{k=1}^K
    \gamma_{ik}\left\|\frac{q_{ik}}{\gamma_{ik}}\right\|^p\right)^{1/p}\\
    &\leq R(\theta)+2L_\xi\delta
\end{align*}
where terms with $\gamma_{ik}=0$ are interpreted as zero. The first inequality holds since the relaxed epigraph constraints imply $t_{ik}\ge\gamma_{ik}\ell(\beta_{ik},\xi_{ik})$, 
the second inequality uses convexity of $\beta\mapsto\ell(\beta,\hat\xi_i)$, the identity
$\beta = \sum_{k\in[K]}\gamma_{ik}\beta_{ik}$, and the $L_\xi$-Lipschitz bound in $\xi$, and the last two inequalities use
Jensen's inequality, $J_{\rm ex}\le R(\theta)$, and the budget constraint.
Taking the supremum over all relaxed feasible points yields $\widehat R(\theta)\le R(\theta)+2L_\xi\delta$.
\end{proof}

\end{document}